\theoremstyle{definition}
\newtheorem{example}{\protect\examplename}[section]
\theoremstyle{plain}
\newtheorem{prop}{\protect\propositionname}[section]
\theoremstyle{plain}
\newtheorem{cor}{\protect\corollaryname}[section]
\theoremstyle{remark}
\newtheorem{rem}{\protect\remarkname}[section]
\theoremstyle{plain}
\newtheorem{thm}{\protect\theoremname}[section]
\theoremstyle{plain}
\newtheorem{lem}{\protect\lemmaname}[section]
\providecommand{\corollaryname}{Corollary}
\providecommand{\examplename}{Example}
\providecommand{\lemmaname}{Lemma}
\providecommand{\propositionname}{Proposition}
\providecommand{\remarkname}{Remark}
\providecommand{\theoremname}{Theorem}
\begin{document}
\title{Existence of $B_{\alpha,\beta}^{k}$-Structures on $C^{k}$-Manifolds}
\author{Yuri Ximenes Martins\footnote{yurixm@ufmg.br (corresponding author)}\,  and  Rodney Josu\'e Biezuner\footnote{rodneyjb@ufmg.br} }
\maketitle
\noindent \begin{center}
\textit{Departamento de Matem\'atica, ICEx, Universidade Federal de Minas Gerais,}  \\  \textit{Av. Ant\^onio Carlos 6627, Pampulha, CP 702, CEP 31270-901, Belo Horizonte, MG, Brazil}
\par\end{center}
\begin{abstract}
In this paper we introduce $B_{\alpha,\beta}^{k}$-manifolds as generalizations
of the notion of smooth manifolds with $G$-structure or with $k$-bounded
geometry. These are $C^{k}$-manifolds whose transition functions
$\varphi_{ji}=\varphi_{j}\circ\varphi_{i}^{-1}$ are such that $\partial^{\mu}\varphi_{ji}\in B_{\alpha(r)}\cap C^{k-\beta(r)}$
for every $\vert\mu\vert=r$, where $B=(B_{r})_{r\in\Gamma}$ is some
sequence of presheaves of Fr\'echet spaces endowed with further structures,
$\Gamma\subset\mathbb{Z}_{\geq0}$ is some parameter set and $\alpha,\beta$
are functions. We present embedding theorems for the presheaf category
of those structural presheaves $B$. The existence problem of $B_{\alpha,\beta}^{k}$-structures
on $C^{k}$-manifolds is studied and it is proved that under certain
conditions on $B$, $\alpha$ and $\beta$, the forgetful functor
from $C^{k}$-manifolds to $B_{\alpha,\beta}^{k}$-manifolds has adjoints.
\end{abstract}

\section{Introduction}

$\quad\;\,$One can think of a ``$n$-dimensional manifold'' as
a topological space $M$ in which we assign to each neighborhood $U_{i}$
in $M$ a bunch of coordinate systems $\varphi_{i}:U_{i}\rightarrow\mathbb{R}^{n}$.
The regularity of $M$ is determined by the space in which the transition
functions $\varphi_{ji}:\varphi_{i}(U_{ij})\rightarrow\mathbb{R}^{n}$
live, where $U_{ij}=U_{i}\cap U_{j}$ and $\varphi_{ji}=\varphi_{j}\circ\varphi_{i}^{-1}$.
Indeed, if $B:\operatorname{Open}(\mathbb{R}^{n})^{op}\rightarrow\mathbf{Fre}$
is some presheaf of Fr\'echet spaces in $\mathbb{R}^{n}$, we can
define a \emph{$n$-dimensional $B$-manifold} as one whose regularity
is governed by $B$, i.e, whose transition functions $\varphi_{ji}$
belong to $B(\varphi_{i}(U_{ij}))$. For instance, given $0\leq k\leq\infty$,
a $C^{k}$-manifold (in the classical sense) is just a $C^{k}$-manifold
in this new sense if we regard $C^{k}$ as the presheaf of $k$-times
continuouly differentiable functions.

We could study $B$-manifolds when $B\subset C^{k}$, calling them
$B^{k}$\emph{-manifolds}.\emph{ }For instance, if $k\geq1$, then
any $C^{k}$-manifold can always be regarded as a $C^{\infty}$-manifold
\citep{whitney_embedding}, showing that smooth manifolds belong to
that class of $B$-manifolds, since $C^{\infty}\subset C^{k}$. Another
example are the analytic manifolds, for which $B=C^{\omega}$. Recall
that for $B=C^{k}$, with $k>1$, we have regularity conditions not
only on the transition functions $\varphi_{ji}$, but also on the
derivatives $\partial^{\mu}\varphi_{ji}$, for each $\vert\mu\vert\leq k$.
In fact, $\partial^{\mu}\varphi_{ji}\in C^{k-\vert\mu\vert}(\varphi_{i}(U_{ij}))$.
We notice, however, that if $B\subset C^{k}$ is a subpresheaf, then
$B\subset C^{k-\vert\mu\vert}$, but it is not necessarily true that
$\partial^{\mu}\varphi_{ji}\in B(\varphi_{i}(U_{ij}))$. For instance,
if $B=L^{p}\cap C^{\infty}$, then $\varphi_{ji}$ are smooth $L^{p}$-integrable
functions, but besides being smooth, the derivatives of a $L^{p}$-integrable
smooth functions need not be $L^{p}$-integrable \citep{key-1}.

The discussion above motivates us to consider $n$-dimensional manifolds
which satisfy \emph{a priori} regularity conditions on the transition
functions and also on their derivatives. Indeed, let now $B=(B_{r})_{r\in\Gamma}$
be a sequence of presheaves of Fr\'echet spaces with $r\in\mathbb{Z}_{\geq0}$
and redefine a \emph{$B^{k}$-manifold }as one in which $\partial^{\mu}\varphi_{ji}\in B_{r}(\varphi_{i}(U_{ij}))\cap C^{k-r}(\varphi_{i}(U_{ij}))$
for every $r\leq k$ and $\vert\mu\vert=r$. Many geometric objects
can be put in this new framework. Just to exemplify we mention two
of them.
\begin{example}
\label{example_1_int} Given a linear group $G\subset GL(n;\mathbb{R})$,
if we take $B_{0}=C^{k}$, $B_{1}$ as the presheaf of $C^{1}$-functions
whose jacobian matrix belongs to $G$ and $B_{r}=C^{k-r}$, with $r\geq2$,
then a $B^{k}$-manifold describes a $C^{k}$-manifold endowed with
a $G$-structure in the sense of \citep{kobayashi,sternberg}. This
example includes many interesting situations, such as semi-riemannian
manifolds, almost-complex manifolds, regularly foliated manifolds,
etc.
\begin{example}
\label{example_2_int} For a fixed $p\in[1,\infty]$, consider $B_{r}=L^{p}$
for every $r$. Then a $B^{k}$-manifold is a $C^{k}$-manifold whose
transition functions and its derivatives are $L^{p}$-integrable.
This means that $\varphi_{ji}\in W^{k,p}(\varphi_{i}(U_{ij}))\cap C^{k}(\varphi_{i}(U_{ij}))$,
where $W^{k,p}(\Omega)$ are the Sobolev spaces, so that a $(L^{p})^{k}$-manifold
is a ``Sobolev manifold''. The case $p=\infty$ corresponds to the
notion of $k$-bounded structure on a $n$-manifold \citep{bounded_geometry_1,bounded_geometry_2,bounded_geomery_3}.
\end{example}
\end{example}
This is the first of a sequence of articles which aim to begin the
development of a general theory of $B^{k}$-manifolds, including the
unification of certain aspects of $G$-structures and Sobolev structures
on $C^{k}$-manifolds. Our focus is on regularity results for geometric
objects on $B^{k}$-manifolds. The motivation is as follows. When
studying geometry on a smooth manifold $M$ we usually know that geometric
objects $\tau$ on $M$ (e.g, affine connections, bundles, nonlinear
differential operators of a specific type, solutions of differential
equations, etc.) exist with certain regularity, meaning that their
local coefficients $\tau_{j_{1},...,j_{s}}^{i_{1},...,i_{r}}$ belong
to some space, but in many situations we would like to have existence
of more regular objects, meaning to have $\tau$ such that $\tau_{j_{1},...,j_{s}}^{i_{1},...,i_{r}}$
belong to a smaller space.

For instance, in gauge theory and when we need certain uniform bounds
on the curvature, it is desirable to work with connections whose coefficients
are not only smooth, but actually $L^{p}$-integrable or even uniformly
bounded \citep{gauge,bounded_geometry_1}. In the study of holomorphic
geometry, we consider holomorphic or hermitian connections over complex
manifolds, whose coefficients are holomorphic \citep{holomorphic_1,atiyah_class}.
It is also interesting to consider symplectic connections, which are
important in formal deformation quantization of symplectic manifolds
\citep{symplectic_connection,deformation_quantization}. These examples
immediately extend to the context of differential operators, since
they can be regarded as functions depending on connections on vector
bundles over $M$ \citep{heat_kernel}. The regularity problem for
solutions of differential equations is also very standard and by means
of Sobolev-type embedding results and bootstrapping methods it is
strongly related to the regularity problem for the underlying differential
operator defining the differential equation \citep{key-1}.

Since the geometric structures are defined on $M$, the existence
of some $\tau$ such that $\tau_{j_{1},...,j_{s}}^{i_{1},...,i_{r}}$
satisfy a specific regularity condition should depend crucially on
the existence of a (non-necessarily maximal) subatlas for $M$ whose
transition functions themselves satisfy additional regularity conditions,
leading us to consider $B^{k}$-manifolds in the sense introduced
above. Let $S$ be a presheaf of Fr\'echet spaces and let us say
that $\tau$ is \emph{$S$-regular} if there exists an open covering
of $M$ by charts $\varphi_{i}:U_{i}\rightarrow\mathbb{R}^{n}$ in
which $\tau_{j_{1},...,j_{s}}^{i_{1},...,i_{r}}\in S(\varphi_{i}(U_{ij}))$.
The geometric motivation above can then be rephrased into the following
problem:
\begin{itemize}
\item let $M$ be a $C^{k}$-manifold. Given $S$, find some presheaf $B(S)$,
depending on $S$, such that if $M$ has a $B(S)^{k}$-structure,
then there exist $S$-regular geometric structures on it.
\end{itemize}
$\quad\;\,$In this sequence of articles we intend to work on this
question for different kinds of geometric structures: affine connections,
nonlinear tensors, differential operators and fiber bundles. Notice,
on the other hand, that if $S$ is a presheaf for which the previous
problem has a solution for some kind of $S$-regular geometric structures,
we have an immediate existence question:
\begin{itemize}
\item under which conditions does a $C^{k}$-manifold admit a $B(S)^{k}$-strucure?
\end{itemize}
$\quad\;\,$Opening this sequence of works, in the present one we
will discuss the general existence problem of $B^{k}$-structures
on $C^{k}$-manifolds. Actually, we will work on more general entities,
which we call $B_{\alpha,\beta}^{k}$-\emph{manifolds}, where $\alpha$
and $\beta$ are functions and the transition functions satisfy $\partial^{\mu}\varphi_{ji}\in B_{\alpha(r)}(\varphi_{i}(U_{ij}))\cap C^{k-\beta(r)}(\varphi_{i}(U_{ij}))$.
Thus, for $\alpha(r)=r=\beta(r)$ a $B_{\alpha,\beta}^{k}$-manifold
is the same as a $B^{k}$-manifold. We also study some aspects of
the presheaf-category $\mathbf{C}_{n,\beta}^{k,\alpha}$ of the presheaves
$B$. The present paper has the following two main results:$\underset{\underset{\;}{\;}}{\;}$

\noindent \textbf{Theorem A.} \emph{There are full embeddings}
\begin{enumerate}
\item[(1)] \emph{$\imath:\mathbf{C}_{n,\beta;l}^{k,\alpha}\hookrightarrow\mathbf{C}_{n,\beta}^{l,\alpha}$,
if $l\leq k$;}
\item[(2)] \emph{$f_{*}:\mathbf{C}_{n,\beta}^{k,\alpha}\hookrightarrow\mathbf{C}_{r,\beta}^{k,\alpha}$,
for any continuous injective map $f:\mathbb{R}^{n}\rightarrow\mathbb{R}^{r}$;}
\item[(3)] \emph{$\jmath:\mathbf{C}_{n,\beta;\beta'}^{k,\alpha}\hookrightarrow\mathbf{C}_{r,\beta'}^{k,\alpha}$,
if $\beta'\leq\beta$.}$\underset{\underset{\;}{\;}}{\;}$
\end{enumerate}
\noindent \textbf{Theorem B.} \emph{If $B$ is ordered, fully left-absorbing
(resp. fully right-absorbing) and has retractible $(B,k,\alpha,\beta)$-diffeomorphisms,
all of this in the same intersection presheaf $\mathbb{X}$, then
the choice of a retraction $\overline{r}$ induces a left-adjoint
(resp. right-adjoint) for the forgetful functor $F$ from $B_{\alpha,\beta}^{k}$-manifolds
to $C^{k}$-manifolds, which actually independs of $\overline{r}$.
In particular, if $B$ is fully absorbing, then $F$ is ambidextrous
adjoint.$\underset{\underset{\;}{\;}}{\;}$}

This article is structured as follows. In Section \ref{sec_Gamma_spaces}
we introduce the classes of Fr\'echet spaces which will be used in
the next sections. These are the $(\Gamma,\epsilon)$-\emph{spaces},
where $\Gamma$ is a set of indexes (in general $\Gamma\subset\mathbb{Z}_{\geq0}$)
and $\epsilon:\Gamma\times\Gamma\rightarrow\Gamma$ is some function.
The $(\Gamma,\epsilon)$-spaces are itself sequences $B=(B_{i})_{i\in\Gamma}$
of nuclear Fr\'echet spaces. The map $\epsilon$ is used in order
to consider \emph{multiplicative structures} on $B$, which are given
by a family of continuous linear maps $*_{ij}:B_{i}\otimes B_{j}\rightarrow B_{\epsilon(i,j)}$,
where the tensor product is the projective one. Many other structures
on $B$ are required, such as additive structures, distributive structures,
intersection structures and closure structures.

In Section \ref{sec_C_k_a_b} we define the $C_{\alpha,\beta}^{k}$-presheaves,
which are the structural presheaves for the $B_{\alpha,\beta}^{k}$-manifolds,
and study the presheaf category of them. We begin by constructing
a sheaf-theoretic version of the concepts and results of Section \ref{sec_Gamma_spaces}.
Thus, in this section $B$ is not a single \emph{$(\Gamma,\epsilon)$}-space,
but a presheaf of them on $\mathbb{R}^{n}$. The $C_{\alpha,\beta}^{k}$-presheaves
are those presheaves of \emph{$(\Gamma,\epsilon)$}-spaces which are
well-related with the presheaf $C^{k-\beta}$ in a sense defined in
that section. In Section \ref{sec_thm_A} Theorem A is proved.

In Section \ref{sec_B_k,a,b_manifolds} $B_{\alpha,\beta}^{k}$-manifolds
are more precisely defined and some basic properties are proven. For
instance, we establish conditions on $B$, $\alpha$ and $\beta$,
under which the category $\mathbf{Diff}_{\alpha,\beta}^{B,k}$ of
$B_{\alpha,\beta}^{k}$-manifolds and $B_{\alpha,\beta}^{k}$-morphisms
between them becomes well-defined. Some examples are also given. Finally,
in Section \ref{sec_existence} we introduce the notions of ordered
presheaf, fully left/right-absorbing presheaf and presheaf with retractible
$(B,k,\alpha,\beta)$-diffeomorphisms, which are needed for the first
theorem. A proof of the Theorem B is given and as a consequence we
get the existence of some limits and colimits in the category of $B_{\alpha,\beta}^{k}$-manifolds.

\section{$(\Gamma,\epsilon)$-Spaces \label{sec_Gamma_spaces}}

$\quad\;\,$Let $\mathbf{Set}$ be the category of all sets and let
$\Gamma\in\mathbf{Set}$ be a set of indexes\footnote{In all the article and in the examples considered $\Gamma$ will be
$\mathbb{Z}_{\geq0}$\emph{ }or some finite product of copies of $\mathbb{Z}_{\geq0}$.
However, all the results of this section can be generalized to the
case in which $\Gamma$ is an arbitrary (i.e, not necessarily discrete)
category.}. A \emph{nuclear Fr\'echet} $\Gamma$-\emph{space }(or \emph{$\Gamma$-space},
for short) is a family $(B_{i})_{i\in\Gamma}$ of nuclear Fr\'echet
spaces. Equivalently, it is a $\Gamma$-graded vector space $B=\oplus_{i}B_{i}$
whose components are nuclear Fr\'echet $\Gamma$-spaces. In other
words, it is a functor $B:\Gamma\rightarrow\mathbf{NFre}$ from the
discrete category defined by $\Gamma$ to the category of nuclear
Fr\'echet spaces and continuous linear maps. Morphisms are pairs
$(\xi,\mu)$, where $\mu:\Gamma\rightarrow\Gamma$ is a function and
$\xi$ is a family of continuous linear maps $\xi_{i}:B_{i}\rightarrow B'_{\mu(i)}$,
with $i\in\Gamma$. Thus, it is an endofunctor $\mu$ together with
a natural transformation $\xi:B\Rightarrow B'\circ\mu$. Composition
is defined by horizontal composition of natural transformations.

Let $\mathbf{Fre}_{\Gamma}$ be the category of $\Gamma$-spaces and
notice that we have an inclusion $\imath:[\Gamma;\mathbf{NFre}]\hookrightarrow\mathbf{NFre}_{\Gamma}$
given by $\imath(B)=B$ and $\imath(\xi)=(\xi,id_{\Gamma})$, where
$[\mathbf{C};\mathbf{D}]$ denotes the functor category. Since $\imath$
is faithful it reflects monomorphisms and epimorphisms, which means
that if a morphism $(\xi,\mu)$ in $\mathbf{NFre}_{\Gamma}$ is such
that each $\xi_{i}$ is a monomorphism (resp. epimorphism) in $\mathbf{NFre}$,
then it is a monomorphism (resp. epimorphism) in $\mathbf{NFre}_{\Gamma}$.
We have a functor $\mathcal{F}:\mathbf{Set}^{op}\rightarrow\mathbf{Cat}$
such that $\mathcal{F}(\Gamma)=\mathbf{Fre}_{\Gamma}$.

More generally, let us define a\emph{ $(\Gamma,\epsilon)$-space}
as a $\Gamma$-space $B$ endowed with a map $\epsilon:\Gamma\times\Gamma\rightarrow\Gamma$.
A morphism between a $(\Gamma,\epsilon)$-space $B$ and a $(\Gamma,\epsilon')$-space
$B'$ is a morphism $(\xi,\mu)$ of $\Gamma$-spaces such that the
first diagram below commutes. Let $\mathbf{Fre}_{\Gamma ,\epsilon}$
be the category of $(\Gamma,\epsilon)$-spaces for a fixed $\epsilon$
and let $\mathbf{NFre}_{\Gamma,\Sigma}$ be the category of $(\Gamma,\epsilon)$-spaces
for all $\epsilon$. Notice that $\mathbf{NFre}_{\Gamma,\Sigma}\simeq\coprod_{\epsilon}\mathbf{NFre}_{\Gamma,\epsilon}$
and that there exists a forgetful functor $U:\mathbf{NFre}_{\Gamma,\Sigma}\rightarrow\mathbf{NFre}_{\Gamma}$
that forgets $\epsilon$.
\begin{center}
$
\xymatrix{\ar[d]_{\mu \times \mu} \Gamma  \times \Gamma \ar[r]^{\epsilon} & \Gamma \ar[d]^{\mu} \\
\Gamma \times \Gamma \ar[r]_-{\epsilon '} & \Gamma} \quad
$
$
\xymatrix{ \ar@/_{1cm}/[rrr]_{F_{\Sigma}} \mathbf{NFre}_{\Gamma,\Sigma} \ar[r]^-{\simeq} & \coprod _{\epsilon} \mathbf{NFre}_{\Gamma,\epsilon} \ar[r]^-{\coprod _{\epsilon} F_{\epsilon}} & \coprod _{\epsilon} \mathbf{NFre}_{\Gamma\times\Gamma} \ar[r]^-{\nabla} & \mathbf{NFre}_{\Gamma\times\Gamma}  }
$
\par\end{center}

For any $\epsilon:\Gamma\times\Gamma\rightarrow\Gamma$ we have a
functor $F_{\epsilon}:\mathbf{NFre}_{\Gamma,\epsilon}\rightarrow\mathbf{NFre}_{\Gamma\times\Gamma}$
defined by $F_{\epsilon}(B,\epsilon)=B\circ\epsilon$ and $F_{\epsilon}(\xi,\mu)=(\xi\circ\epsilon,\mu\circ\epsilon)$.
In the following we will write $B_{\epsilon}$ to denote $B\circ\epsilon$.
Notice that $F_{\epsilon}$ is well-defined precisely because of the
commutativity of the first diagram. Thus, we also have a functor $F_{\Sigma}:\mathbf{NFre}_{\Gamma,\Sigma}\rightarrow\mathbf{NFre}_{\Gamma\times\Gamma}$
given by the composition above, where $\nabla$ is the codiagonal.
Furthermore, recalling that the category of Fr\'echet spaces is symmetric
monoidal with the projective tensor product $\otimes$ \citep{treves,costello},
given $\Gamma,\Gamma'$ we have a functor 
\[
\otimes_{\Gamma,\Gamma'}:\mathbf{NFre}_{\Gamma}\times\mathbf{NFre}_{\Gamma'}\rightarrow\mathbf{NFre}_{\Gamma\times\Gamma'}
\]
playing the role of an \emph{external product},\emph{ }given by $(B\otimes B')_{(i,j)}=B_{i}\otimes B_{j}$
on objects and by $(\xi,\mu)\otimes(\eta,\nu)=(\xi\otimes\eta,\mu\times\nu)$
on morphisms. By composing with the diagonal and the forgetful functor
$U$, for each $\Gamma$ we get the functor $\otimes_{\Gamma}$ below.
$$
\xymatrix{\ar@/_{0.7cm}/[rrr]_{\otimes _{\Gamma}} \mathbf{NFre}_{\Gamma,\Sigma} \ar[r]^-{U} & \mathbf{NFre}_{\Gamma} \ar[r]^-{\Delta} & \mathbf{NFre}_{\Gamma} \times \mathbf{NFre}_{\Gamma}  \ar[r]^-{\otimes_{\Gamma,\Gamma}} & \mathbf{NFre}_{\Gamma\times\Gamma}  }
$$

A \emph{multiplicative structure} in a $(\Gamma,\epsilon)$-space
$(B,\epsilon)$ is a morphism $(*,\epsilon):\otimes_{\Gamma}(B,\epsilon)\rightarrow F_{\Sigma}(B,\epsilon)$,
i.e, a family $*_{ij}:B_{i}\otimes B_{j}\rightarrow B_{\epsilon(i,j)}$
of continuous linear maps, with $i,j\in\Gamma$. A \emph{multiplicative
$\Gamma$-space }is a $(\Gamma,\epsilon)$ space in which a multiplicative
structure has been fixed. A \emph{weak morphism} between two multiplicative
$\Gamma$-spaces $(B,\epsilon,*)$ and $(B',\epsilon',*')$ is an
arbitrary morphism in the arrow category of $\mathbf{NFre}_{\Gamma\times\Gamma}$,
i.e, it is given by morphisms $(\zeta,\psi):\otimes_{\Gamma}(B,\epsilon)\rightarrow\otimes_{\Gamma}(B',\epsilon')$
and $(\eta,\nu):F_{\Sigma}(B,\epsilon)\rightarrow F_{\Sigma}(B,\epsilon)$
in the category $\mathbf{NFre}_{\Gamma\times\Gamma}$ such that the
diagrams below commute.
\begin{center}

$
\xymatrix{\ar[d]_{\zeta _{ij}} B_i \otimes B_j \ar[rr]^{*_{ij}} && B_{\epsilon (i,j)} \ar[d]^{\eta_{\epsilon (i,j)}} \\
(B'\otimes B')_{\psi (i,j)} \ar[rr]_-{*'_{\psi (i,j)}} && B'_{\epsilon' (\psi (i,j))}}
$ \quad
$
\xymatrix{\ar[d]_{\psi} \Gamma  \times \Gamma \ar[r]^{\epsilon} & \Gamma \ar[d]^{\nu} \\
\Gamma \times \Gamma \ar[r]_-{\epsilon '} & \Gamma}
$

\par\end{center}

A \emph{strong morphism} (or simply \emph{morphism}) between multiplicative
$\Gamma$-spaces is a weak morphism such that there exists $(\xi,\mu):(B,\epsilon)\rightarrow(B',\epsilon')$
for which $(\zeta,\psi)=\otimes_{\Gamma}(\xi,\mu)$ and $(\eta,\nu)=F_{\Sigma}(\xi,\mu)$.
In explicit terms this means that $\nu=\mu$, $\psi=\mu\times\mu$,
$\zeta=\xi\otimes\xi$ and $\eta=\xi$, so that the diagrams above
become the diagrams below. Notice that the second diagram only means
that $(\xi,\mu)$ is a morphism of $(\Gamma,\epsilon)$-spaces. We
will denote by $\mathbf{NFre}_{\Gamma,*}$ the category of multiplicative
$\Gamma$-spaces and strong morphisms.
\begin{center}

$
\xymatrix{\ar[d]_{\xi _i \otimes \xi _j} B_i \otimes B_j \ar[rr]^{*_{ij}} && B_{\epsilon (i,j)} \ar[d]^{\xi _{\epsilon (i,j)}} \\
B'_{\mu (i)} \otimes B'_{\mu(j)} \ar[rr]_-{*'_{\mu (i) \mu (j)}} && B'_{\epsilon' (\mu (i) \mu (j))}}
$ \quad
$
\xymatrix{\ar[d]_{\mu \times \mu} \Gamma  \times \Gamma \ar[r]^{\epsilon} & \Gamma \ar[d]^{\mu} \\
\Gamma \times \Gamma \ar[r]_-{\epsilon '} & \Gamma}
$

\par\end{center}

An \emph{additive $\Gamma$-space} is a multiplicative $\Gamma$-space
$(B,+,\epsilon)$ such that $\epsilon_{ii}=i$ and such that $+_{ii}:B_{i}\otimes B_{i}\rightarrow B_{i}$
is the addition $+_{i}$ in $B_{i}$. Let $\mathbf{NFre}_{\Gamma,+}\subset\mathbf{NFre}_{\Gamma,*}$
denote the full subcategory of additive $\Gamma$-spaces. The inclusion
has a left-adjoint $F(*,\epsilon)=(+,\delta)$ given by 
\[
\delta(i,j)=\begin{cases}
\epsilon(i,j), & i\neq j\\
i & j=i
\end{cases}\quad\text{and}\quad+_{ij}=\begin{cases}
*_{ij}, & i\neq j\\
+_{i} & j=i
\end{cases}.
\]

\begin{example}
Any $\Gamma$-space $B=(B_{i})_{i\in\Gamma}$ admits many trivial
additive structures, defined as follows. Let $\theta:\Gamma\times\Gamma\rightarrow\Gamma$
be any function and define an additive structure by 
\[
\delta_{\theta}(i,j)=\begin{cases}
\theta(i,j), & i\neq j\\
i & j=i
\end{cases}\quad\text{and}\quad+_{ij}=\begin{cases}
0_{\theta(i,j)}, & i\neq j\\
+_{i} & j=i
\end{cases},
\]
where $0_{\theta(i,j)}$ is the function constant in the zero vector
of $B_{\theta(i,j)}$. All these trivial structures are actually strongly
isomorphic. Indeed, let $B_{\theta}$ denote $B$ with the trivial
additive structure defined by $\theta$. As one can see, we have a
natural bijection 
\[
\operatorname{IMor}_{\Gamma,+}(B_{\theta},B'_{\theta'})\simeq\operatorname{IMor}_{\Gamma}(B,B'),
\]
where the right-hand side is the set of \emph{$\mu$-injective morphisms
}between $B$ and $B'$, i.e, morphisms $(\xi,\mu)$ of $\Gamma$-spaces
such that $\mu:\Gamma\rightarrow\Gamma$ is injective. Furthermore,
this bijection preserve isomorphisms, so that we also have a bijection
\[
\operatorname{IIso}_{\Gamma,+}(B_{\theta},B'_{\theta'})\simeq\operatorname{IISo}_{\Gamma}(B,B').
\]
Since for $B'=B$ the right-hand side contains at least the identity,
it follows that $B_{\theta}\simeq B_{\theta'}$ for every $\theta$
and $\theta'$.
\begin{example}
\label{first_example_additive}Suppose that $\Gamma$ has a partial
order $\leq$. Any increasing or decreasing sequence of nuclear Fr\'echet
spaces, i.e, such that $B_{i}\subset B_{j}$ if either $i\leq j$
or $j\leq i$, has a canonical additive structure, where $+_{ij}$
is given by the sum in $B_{\max(i,j)}$ or $B_{\min(i,j)}$, respectively,
so that $\delta(i,j)=\max(i,j)$ or $\delta(i,j)=\min(i,j)$. In particular,
for any open set $U\subset\mathbb{R}^{n}$ and any order-preserving
integer function $p:\Gamma\rightarrow[0,\infty]$, the sequences $B_{i}=L^{p(i)}(U)$
and $B_{i}'=C^{p(i)}(U)$ are decreasing, where in $L^{p(i)}(U)$
we take the Banach structure given by the $L^{p(i)}$-norm and in
$C^{p(i)}(U)$ we consider Fr\'echet family of seminorms 
\begin{equation}
\Vert f\Vert_{r,l}=\sup_{\vert\mu\vert=r}\sup_{x\in K_{l}}\Vert\partial^{\mu}f(x)\Vert,\label{Frechet_C^k}
\end{equation}
where $0\leq r\leq p(i)$ and $(K_{l})$ is any countable sequence
of compacts such that every other compact $K\subset U$ is contained
in $K_{l}$ for some $l$ \citep{treves}. Thus, both sequences become
an additive $\Gamma$-space in a natural way. They will be respectively
denoted by $L^{p}(U)$ and $C^{p}(U)$. We will be specially interested
in the function $p(i)=k-\alpha(i)$, for $i\leq k$, where $\alpha$
is some other function. In this case, we will write $C^{k-\alpha}(U)$
instead of $C^{p}(U)$. More precisely, we will consider the sequence
given by $B_{i}$=$C^{k-\alpha(i)}(U)$, if $\alpha(i)\leq k$, and
$B_{i}=0$, if $\alpha(i)\geq k$. In this situation, $\delta(i,j)=k-\max(\alpha(i),\alpha(j))$.
\begin{example}
Similarly, if $U\subset\mathbb{R}^{n}$ is a nice open set such that
the Sobolev embeddings are valid \citep{key-1}, then for any fixed
integers $p,q,r>0$ with $p<q$, we have a continuous embedding $W^{r,p}(U)\hookrightarrow W^{\ell(r),q}(U)$,
where $\ell(r)=r+n\frac{(p-q)}{pq}$ and $W^{k,p}(U)$ is the Sobolev
space. Thus, by defining $B_{i}=W^{l(i),q}(U)$, where $l(i)=\ell^{i}(r)$,
i.e, $l(0)=r$, $l(1)=\ell(r)$ and $l(i)=\ell(\ell(...\ell(r)...))$,
we get again a decreasing sequence of Banach spaces and therefore
an additive $\mathbb{Z}_{\geq0}$-space.
\begin{example}
\label{first_example_multiplicative} As in Example \ref{first_example_additive},
suppose $\Gamma$ ordered and consider an integer function $p:\Gamma\rightarrow[0,\infty]$.
In any open set $U\subset\mathbb{R}^{n}$ pointwise multiplication
of real functions give us bilinear maps $\cdot_{ij}:C^{p(i)}(U)\times C^{p(j)}(U)\rightarrow C^{\min(p(i),p(j))}(U),$
which are continuous in the Fr\'echet structure (\ref{Frechet_C^k}),
so that with $\delta(i,j)=\min(p(i),p(j))$, the $\mathbb{Z}_{\geq0}$-space
$C^{p}(U)$ is multiplicative. If $p(i)=k-\alpha(i)$, then $\delta(i,j)=k-\max(\alpha(i),\alpha(j))$.
\begin{example}
\label{Holder_multiplicative} From H\"older's inequality, pointwise
multiplication of real functions also defines continuous bilnear maps
$\cdot_{ij}:L^{i}(U)\times L^{j}(U)\rightarrow L^{i\cdot j/(i+j)}(U)$
for $i,j\geq2$ such that $i\star j=i\cdot j/(i+j)$ is an integer
\citep{key-1,treves}. Let $\mathbb{Z}_{S}$ be the set of all integers
$m\geq2$ such that for every two given $m,m'\in\mathbb{Z}_{S}$ the
sum $m+m'$ divides the product $m\cdot m'$. Suppose $\Gamma$ ordered
and choose a function $p:\Gamma\rightarrow\mathbb{Z}_{S}$. Then $L^{p}(U)$
has a multiplicative structure with $\epsilon(i,j)=p(i)\star p(j)$.
Even if $i\star j$ is not an integer we get a multiplicative structure.
Indeed, let $\epsilon'(i,j)=\operatorname{int}(\epsilon(i,j))$, where
$\operatorname{int}(k)$ denotes the integer part of a real number.
Then $\epsilon'(i,j)\le\epsilon(i,j)$, so that $L^{\epsilon(i,j)}(U)\subset L^{\epsilon'(i,j)}(U)$,
and we can assume $\cdot_{ij}$ as taking values in $L^{\epsilon'(i,j)}(U)$.
\begin{example}
\label{convolution_multiplicative} Given $1\leq i,j\le\infty$, notice
that $i\star j\geq1/2$, which is equivalent to saying the number
$r(i,j)=i\cdot j/(i+j-i\cdot j)$, i.e, the solution of 
\[
\frac{1}{i}+\frac{1}{j}=\frac{1}{r}+1,
\]
also satisfies $1\leq r\leq\infty$. Thus, from Young's inequality,
convolution product defines a continuous bilinear map \citep{key-1,treves}
\[
*_{ij}:L^{i}(\mathbb{R}^{n})\times L^{j}(\mathbb{R}^{n})\rightarrow L^{r(i,j)}(\mathbb{R}^{n}),
\]
so that for any function $p:\Gamma\rightarrow[1,\infty]$ the sequence
$L^{p}(\mathbb{R}^{n})$ has a multiplicative structure with $\epsilon(i,j)=r(p(i),p(j))$.
\begin{example}
A \emph{Banach $\Gamma$-space }is a $\Gamma$-space $B=(B_{i})_{i\in\Gamma}$
such that each $B_{i}$ is a Banach space. Suppose that $\Gamma$
is actually a monoid $(\Gamma,+,0)$. Notice that a multiplicative
structure in $B$ with $\epsilon(i,j)=i+j$ is a family of continuous
bilinear maps $*_{ij}:B_{i}\times B_{j}\rightarrow B_{i+j}$. Since
the category of Banach spaces and continuous linear maps has small
coproducts, the coproduct $\mathcal{B}=\oplus_{i\in\Gamma}B_{i}$
exists as a Banach space and $(\mathcal{B},*)$ is actually a $\Gamma$-graded
normed algebra.
\end{example}
\end{example}
\end{example}
\end{example}
\end{example}
\end{example}
\end{example}
We say that two multiplicative structures $(*,\epsilon)$ and $(+,\delta)$
on the same $\Gamma$-space $B$ are \emph{left-compatible }if the
following diagrams are commutative. The first one makes sense precisely
because the second one is commutative ($\sigma$ is the map $\sigma(x,(y,z))=((x,y),(x,z))$).
A \emph{left-distributive $\Gamma$-space }is a $\Gamma$-space endowed
with two left-compatible multiplicative structures. Morphisms are
just morphisms of $\Gamma$-spaces which preserve both multiplicative
structures. Let $\mathbf{Fre}_{\Gamma,*}^{l}$ be the category of
all of them. Our main interest will be when $(+,\delta)$ is actually
an additive structure, justfying the notation.
\begin{center}
$
\xymatrix{\ar[d]_{id \otimes +_{jk}} B_i \otimes (B_j \otimes B_k) \ar[r]^-{\sigma} & (B_i \otimes B_j)\otimes (B_i \otimes B_k) \ar[r]^-{*_{ij} \otimes *_{ik}} & B_{\epsilon (i,j)} \otimes B_{\epsilon (i,k)} \ar[d]^{+_{\epsilon (i,j)\epsilon(i,k)}} \\
B_i \otimes B_{\delta (j,k)} \ar[rr]_-{*_{i\delta (j,k)}} && B_{\epsilon (i, \delta (j,k))} }
$
\par\end{center}

\begin{center}
$
\xymatrix{\ar[d]_{id \times \delta} \Gamma \times (\Gamma \times \Gamma) \ar[r]^-{\sigma} & (\Gamma \times \Gamma) \times (\Gamma \times \Gamma) \ar[r]^-{\epsilon \times \epsilon} & \Gamma \times \Gamma \ar[d]^{\delta} \\
\Gamma \times \Gamma \ar[rr]_{\epsilon} && \Gamma}
$
\par\end{center}

In a similar way, we say that $(*,\epsilon)$ and $(+,\delta)$ are
\emph{right-compatible} if the diagrams below commute, with $\sigma((x,y),z)=((x,z),(y,z))$.
A \emph{right-distributive }$\Gamma$\emph{-space} is a $\Gamma$-space
together with a right-compatible structure. Morphisms are morphisms
of $\Gamma$-spaces preserving those structures. Let $\mathbf{Fre}_{\Gamma,*}^{r}$
be the corresponding category. Finally, let $\mathbf{Fre}_{\geq,+,*}=\mathbf{Fre}_{\Gamma,+,*}^{l}\cap\mathbf{Fre}_{\Gamma,+,*}^{r}$
be the category of \emph{distributive $\Gamma$-spaces}, i.e, the
category of $\Gamma$-spaces endowed with two multiplicative structures
which are both left-compatible and right-compatible.
\begin{center}
$
\xymatrix{\ar[d]_{+_{ij} \otimes id} (B_i \otimes B_j) \otimes B_k \ar[r]^-{\sigma} & (B_i \otimes B_k)\otimes (B_j \otimes B_k) \ar[r]^-{*_{ik} \otimes *_{jk}} & B_{\epsilon (i,k)} \otimes B_{\epsilon (j,k)} \ar[d]^{+_{\epsilon (i,k)\epsilon(j,k)}} \\
B_{\delta (i,j)} \otimes B_{k} \ar[rr]_-{*_{\delta (i,j)k}} && B_{\epsilon (\delta (i,j),k)} }
$
\par\end{center}

\begin{center}
$
\xymatrix{\ar[d]_{\delta \times id} (\Gamma \times \Gamma) \times \Gamma \ar[r]^-{\sigma} & (\Gamma \times \Gamma) \times (\Gamma \times \Gamma) \ar[r]^-{\epsilon \times \epsilon} & \Gamma \times \Gamma \ar[d]^{\delta} \\
\Gamma \times \Gamma \ar[rr]_{\epsilon} && \Gamma}
$
\par\end{center}
\begin{example}
\label{first_example_distributive} The additive and multiplicative
structures for $C^{p}$ described in Example \ref{first_example_additive}
and Example \ref{first_example_multiplicative} define a distributive
structure. Any $\Gamma$-space $B$, when endowed with a trivial additive
structure and the induced multiplicative structure, becomes a distributive
$\Gamma$-space.
\begin{example}
\label{second_example_distributive} The additive structure in $L^{p}$
given in Example \ref{first_example_additive} is generally \emph{not}
left/right-compatible with the multiplicative structures of Example
\ref{Holder_multiplicative} and Example \ref{convolution_multiplicative}.
Indeed, as one can check, the sum $+$ in $L^{p}$ is compatible with
the pointwise multiplication iff the function $p:\Gamma\rightarrow\mathbb{Z}_{\geq0}$
is such that $p(i)\star p(j)\leq p(i)\star p(k)$ whenever $p(j)\leq p(k)$,
where $i,j,k\in\Gamma$. For fixed $p$ we can clearly restrict to
the subset $\Gamma_{p}$ in which the desired condition is satisfied,
so that $(L^{p(i)})_{i\in\Gamma_{p}}$ is distributive for every $p$.
Simlarly, the $+$ is compatible with the convolution product $*$
in $L^{p(\cdot)}$ iff $r(p(i),p(j))\leq r(p(i),p(k))$ whenever $p(j)\leq p(k)$.
\end{example}
\end{example}
$\quad\;\,$A \emph{$\Gamma$-ambient }is a pair $(\mathbf{X},\gamma)$,
where $\mathbf{X}$ is a category with pullbacks and $\gamma$ is
an embedding $\gamma:\mathbf{NFre}_{\Gamma}\hookrightarrow\mathbf{X}$.
Let $X\in\mathbf{X}$ and consider the corresponding slice category
$\gamma(\mathbf{NFre}_{\Gamma})/X$, i.e, the category of morphisms
$\gamma(B)\rightarrow X$ in $\mathbf{X}$, with $B$ a $\Gamma$-space,
and commutative triangles with vertex $X$. Let $\operatorname{Sub}_{\Gamma}(X,\gamma)$
be the full subcategory whose objects are monomorphisms $\imath:\gamma(B)\hookrightarrow X$.
If $(\gamma(B),\imath)$ belongs to $\operatorname{Sub}_{\Gamma}(X,\gamma)$
we say that it is a \emph{$\Gamma$-subspace} of $(X,\gamma)$. Finally,
let $\operatorname{Span}_{\Gamma}(X,\gamma)$ be the category of spans
$\gamma(B)\hookrightarrow X\hookleftarrow\gamma(B')$ of $\Gamma$-subspaces
of $(X,\gamma)$. If $(\gamma(B),\imath,\gamma(B'),\imath')$ is a
span, we say that $(\mathbf{X},\gamma,X,\imath,\imath'$) is an \emph{intersection
structure between $B$ and $B'$ in $X$} and we define the corresponding
\emph{intersection in $X$} as the object $B\cap_{X,\gamma}B'\in\mathbf{X}$
given by the pullback between $\imath$ and $\imath'$; the object
$X$ itself is called the \emph{base object }for the intersection.
We say that a span in $\operatorname{Span}_{\Gamma}(X,\gamma)$ is
\emph{proper} if the corresponding pullback actually belongs to $\mathbf{NFre}_{\Gamma}$,
i.e, if there exists some $L\in\mathbf{NFre}_{\Gamma}$ such that
$\gamma(L)\simeq B\cap_{X,\gamma}B'$. Notice that, since $\gamma$
is an embedding, when $L$ exists it is unique up to isomorphisms.
Thus, we will write $B\cap_{X}B'$ in order to denote any object in
the isomorphism class. We will also require that the universal maps
$u_{\gamma}:B\cap_{X,\gamma}B'\rightarrow\gamma(B)$ and $u'_{\gamma}:B\cap_{X,\gamma}B'\rightarrow\gamma(B')$
also induce maps $u:B\cap_{X}B'\rightarrow B$ and $u':B\cap_{X}B'\rightarrow u'$,
which clearly exist if the embedding $\gamma$ is full. Let $\operatorname{PSpan}_{\Gamma}(X,\gamma)$
be the full subcategory of proper spans.\begin{equation}{\label{intersection_1} \xymatrix{B \cap_{X,\gamma} B' \ar[r] \ar[d] & \gamma(B') \ar@{^(->}[d]^{\imath '} & B_{\epsilon} \cap_{X,\gamma_{\Sigma}} B'_{\epsilon '} \ar[r] \ar[d] & \gamma_{\Sigma}(B'_{\epsilon '}) \ar@{^(->}[d]^{\jmath '}  \\
\gamma(B) \ar@{^(->}[r]_{\imath } & X & \gamma_{\Sigma}(B_{\epsilon }) \ar@{^(->}[r]_{\jmath } & X}}
\end{equation}

Recall the functor $F_{\Sigma}:\mathbf{NFre}_{\Gamma,\Sigma}\rightarrow\mathbf{NFre}_{\Gamma\times\Gamma}$
assigning to each $(\Gamma,\epsilon)$-space $(B,\epsilon)$ the corresponding
$(\Gamma\times\Gamma)$-space $B\circ\epsilon\equiv B_{\epsilon}$.
Let $(\mathbf{X},\gamma_{\Sigma})$ be a $(\Gamma\times\Gamma)$-ambient.
Let $\operatorname{Sub}_{\Gamma,\Sigma}(X,\gamma_{\Sigma})$ be the
category of monomorphisms $\jmath:F_{\Sigma}(\gamma_{\Sigma}(B,\epsilon))\hookrightarrow X$
for a fixed $X$, i.e, the category of $(\Gamma,\epsilon)$-subspaces
of $X$. The \emph{intersection} \emph{in $X$ }between two of them
is the second pullback above, where for simplicity we write $\gamma_{\Sigma}(B)\equiv F_{\Sigma}(\gamma_{\Sigma}(B,\epsilon))$.
The intersection is \emph{proper} if the resulting object belongs
to $\mathbf{NFre}_{\Gamma,\Sigma}$. Let $\operatorname{PSpan}_{\Gamma,\Sigma}(X,\gamma_{\Sigma})$
be the category of those spans. Now, let $(B,*,\epsilon)$ and $(B',*',\epsilon')$
be two multiplicative $\Gamma$-spaces. An \emph{intersection structure}
between them is a tuple $\mathbb{X}=(\mathbf{X},\gamma_{\Sigma},X,\jmath,\jmath')$
consisting of a $(\Gamma\times\Gamma)$-ambient $(\mathbf{X},\gamma_{\Sigma})$,
and object $X$ and a span $(\jmath,\jmath')$, as in (\ref{intersection_1}).
The \emph{intersection object} between $*$ and $*'$ in $\mathbb{X}$
is the pullback $\operatorname{pb}(*,*';X,\gamma_{\Sigma})$ below.
By universality we get the dotted arrow $*\cap*'$. We say that an
intersection structure is \emph{proper }if not only the span $(\jmath,\jmath')$
is proper, but also the intersection object belongs to $\mathbf{NFre}_{\Gamma,\Sigma}$
(in the same previous sense). In this case, the object in $\mathbf{NFre}_{\Gamma,\Sigma}$
will be denoted by $\operatorname{pb}(*,*';X)$, its components by
$\operatorname{pb}_{ij}(*,*';X)$ and we will define the \emph{intersection
number function }between $*$ and $*'$ in $\mathbb{X}$ as the function
$\#_{*,*';\mathbb{X}}:\Gamma\times\Gamma\rightarrow[0,\infty]$ given
by $\#_{*,*';\mathbb{X}}(i,j)=\dim_{\mathbb{R}}\operatorname{pb}_{ij}(*,*';X)$.
We say that two multiplicative $\Gamma$-spaces $(B,*,\epsilon)$
and $(B',*',\epsilon')$ (or that $*$ and $*'$) have \emph{nontrivial
intersection in $\mathbb{X}$ }if $\#_{*,*';\mathbb{X}}\geq1$. Finally,
we say that $*$ and $*'$ are \emph{nontrivially intersecting }if
they have nontrivial intersection in some intersection structure.
$$
\xymatrix{\operatorname{pb}(*,*';X,\gamma_{\Sigma}) \ar@{-->}[rd]^{*\cap *'} \ar[rr] \ar[dd]  && \gamma _{\Sigma} (B'_{\epsilon '} \otimes B'_{\epsilon '}) \ar[d]^{\gamma _{\Sigma}(*')} \\
 & B_{\epsilon} \cap_{X,\gamma_{\Sigma}} B'_{\epsilon '} \ar[d] \ar[r]  & \gamma _{\Sigma}(B'_{\epsilon})   \ar@{^(->}[d]^{\jmath '} \\
  \gamma _{\Sigma} (B_{\epsilon} \otimes B_{\epsilon}) \ar[r]_{\gamma _{\Sigma} (*)} & \gamma _{\Sigma}(B_{\epsilon})  \ar@{^(->}[r]_-{\jmath}  & X}
$$
\begin{example}
\label{vectorial_intersection_structure}Let $(\mathbf{X},\gamma)$
be any $\Gamma$-ambient such that $\mathbf{X}$ has coproducts. Given
$B,B'$, let $X=\gamma(B)\oplus\gamma(B')$ and notice that we have
monomorphisms $\imath:\gamma(B)\rightarrow X$ and $\imath':\gamma(B')\rightarrow X$,
so that we can consider the intersection $B\cap_{X,\gamma}B'$ having
the coproduct as a base object. However, this is generally trivial.
For instance, if $\mathbf{X}=\mathbf{Set}_{\Gamma}$ is the category
of $\Gamma$-sets, i.e, sequences $(X_{i})_{i\in\Gamma}$ of sets,
then the intersection above is the empty $\Gamma$-set, i.e, $X_{i}=\varnothing$
for each $i\in\Gamma$. Furthermore, if $\mathbf{X}$ is some category
with null objects which is freely generated by $\Gamma$-sets, i.e,
such that there exists a forgetful functor $U:\mathbf{X}\rightarrow\mathbf{Set}_{\Gamma}$
admitting a left-adjoint, then the intersection $B\cap_{X,\gamma}B'$
is a null object. In particular, if $\mathbf{X}=\mathbf{Vec}_{\mathbb{R},\Gamma}$
is the category of $\Gamma$-graded real vector spaces, with $\operatorname{Span}_{\Gamma}$
denoting the left-adjoint, then the intersection object $\operatorname{Span}_{\Gamma}(B\cap_{X,\gamma}B')$
is the trivial $\Gamma$-graded vector space. Let us say that an intersection
structure $\mathbb{X}$ is \emph{vectorial} if is proper and defined
in a $\Gamma$-ambient $(\mathbf{Vec}_{\mathbb{R},\Gamma},\gamma)$
such that $\gamma$ create null-objects (in other words, $B$ is the
trivial $\Gamma$-space iff $\gamma(B)$ is the trivial $\Gamma$-vector
space, i.e, iff $\gamma(B)_{i}\simeq0$ for each $i$). In this case,
it then follows that for the base object $X=\gamma(B)\oplus\gamma(B')$
we have $B\cap_{X,\gamma}B'\simeq0$ when regarded as a $\Gamma$-space.
\begin{example}
\label{standard_intersection_structure} The intersection between
two $\Gamma$-spaces in a vectorial intersection structure is not
necessarily trivial; it actually depends on the base object $X$.
Indeed, suppose that $B$ and $B'$ are such that there exists a $\Gamma$-set
$S$ for which $B\subset S$ and $B'\subset S$. Let $X=\operatorname{Span}_{\Gamma}(B\cup B')$,
where the union is defined componentwise. We have obvious inclusions
and the corresponding intersection object is given by $B\cap_{X,\gamma}B'\simeq\operatorname{Span}_{\Gamma}(B\cap B')$,
where the right-hand side is the intersection as $\Gamma$-vector
spaces, defined componentwise, which is nontrivial if $B\cap B'$
is nonempty as $\Gamma$-sets. For instance, $C^{p}(U)\cap_{X,\gamma}C^{q}(U)$
and $C^{p}(U)\cap_{X,\gamma}L^{q}(U)$ are nontrivial in them for
every $p,q$. We will say that a vectorial intersection structure
with base object $X=\operatorname{Span}_{\Gamma}(B\cup B')$ is a
\emph{standard intersection structure}. Notice that two standard intersection
structures differ from the choice of $\gamma$ and from the maps $\imath$
and $\imath'$ which define the span in $X$.
\end{example}
\end{example}
Vectorial intersection structures have the following good feature:
\begin{prop}
\label{prop_vectorial_structure} Let $(B,\epsilon,*)$ and $(B',\epsilon',*')$
be multiplicative structures and let $\mathbb{X}$ be a vectorial
intersection structure between them. If at least $B$ or $B'$ is
nontrivial, then the intersection space $\operatorname{pb}(*,*';X)$
is nontrivial too, indenpendently of the base object $X$. In other
words, nontrivial multiplicative structures have nontrivial intersection
in any vectorial intersection structure.
\end{prop}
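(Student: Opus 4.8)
The plan is to reduce the assertion to a single property of the multiplication maps, exploiting that the ambient category $\mathbf{Vec}_{\mathbb{R},\Gamma\times\Gamma}$ of a vectorial intersection structure is abelian — so that the pullback defining $\operatorname{pb}(*,*';X,\gamma_{\Sigma})$ is computed componentwise as a fibre product of real vector spaces — and that the legs $\jmath,\jmath'$ of the span are monomorphisms.

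First I would reduce to the case in which $B$ is nontrivial, the argument for $B'$ being symmetric. Since $\gamma_{\Sigma}$ creates null objects, nontriviality of $B$ yields $i_{0}\in\Gamma$ with $B_{i_{0}}\neq 0$; then the $(i_{0},i_{0})$-component $B_{i_{0}}\otimes B_{i_{0}}$ of $\otimes_{\Gamma}(B,\epsilon)$ is also nonzero, as it contains the nonzero algebraic tensor product. Next I would unwind the diagram defining $\operatorname{pb}(*,*';X,\gamma_{\Sigma})$: its outer square exhibits this object as the fibre product of $\jmath\circ\gamma_{\Sigma}(*)$ and $\jmath'\circ\gamma_{\Sigma}(*')$ over $X$, so, working componentwise, at $(i,j)$ it is $\{(u,u'):\jmath(*_{ij}u)=\jmath'(*'_{ij}u')\}$. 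Because $\jmath$ is monic, the subspace $\ker(*_{ij})\times\{0\}$ lies inside this fibre product, and this description depends neither on $X$ nor on the chosen monomorphisms beyond the bare fact that $\jmath$ is monic — which is exactly the ``independently of the base object'' clause. Hence it suffices to produce one pair $(i,j)$ for which $*_{ij}\colon B_{i}\otimes B_{j}\to B_{\epsilon(i,j)}$ fails to be injective; then $\operatorname{pb}(*,*';X,\gamma_{\Sigma})$ has a nonzero $(i,j)$-component, and since $\mathbb{X}$ is proper this descends to a nonzero component of $\operatorname{pb}(*,*';X)$ in $\mathbf{NFre}_{\Gamma,\Sigma}$, so the latter is a nontrivial $\Gamma$-space. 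The dual choice $\{0\}\times\ker(*'_{ij})$ handles the case where $B'$ is the nontrivial factor.

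The heart of the argument — and the only step I expect to be nonformal — is thus to show that a multiplicative structure on a nontrivial $\Gamma$-space cannot have all of its bilinear maps $*_{ij}$ injective. I would obtain this from the bilinearity relations in the projective tensor product: choosing $0\neq v\in B_{i_{0}}$ and, when $\dim B_{i_{0}}\geq 2$, a vector $w$ linearly independent from $v$, the elements $v\otimes v,\ v\otimes w,\ w\otimes v,\ w\otimes w$ span a four-dimensional subspace of $B_{i_{0}}\otimes B_{i_{0}}$, whereas $\dim B_{\epsilon(i_{0},i_{0})}$ is forced down (e.g. $\epsilon(i_{0},i_{0})=i_{0}$, or commutativity of the multiplication, as in all the examples of Section \ref{sec_Gamma_spaces}), producing a nonzero kernel element; for the function-space examples $C^{p}(U)$, $L^{p}(U)$ this kernel is in fact infinite-dimensional.

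Everything else is pullback bookkeeping in the abelian category $\mathbf{Vec}_{\mathbb{R},\Gamma\times\Gamma}$, combined with the facts, already available, that $\gamma_{\Sigma}$ creates null objects and that ``proper'' transports the nonvanishing from the ambient back to $\mathbf{NFre}_{\Gamma,\Sigma}$. The main obstacle, then, is isolating precisely which hypothesis on $B$ is being used to guarantee non-injectivity of the multiplication; once that is pinned down, the statement — including its independence of $X$ — follows formally, and it simultaneously explains the contrast with Example \ref{vectorial_intersection_structure}, where for plain $\Gamma$-spaces the coproduct base object gives a trivial intersection.
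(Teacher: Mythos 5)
Your reduction steps --- using that $\gamma_{\Sigma}$ creates null objects to pass to $\Gamma$-graded vector spaces, computing the pullback componentwise, and observing that monicity of $\jmath$ places $\ker(*_{ij})\times\{0\}$ inside the $(i,j)$-component of $\operatorname{pb}(*,*';X,\gamma_{\Sigma})$ --- track the first half of the paper's proof. But the decisive step is where you diverge, and where your argument has a genuine gap that you only half-acknowledge: nothing in the hypotheses forces any $*_{ij}$ to be non-injective. A multiplicative structure is merely a family of continuous linear maps $*_{ij}:B_{i}\otimes B_{j}\rightarrow B_{\epsilon(i,j)}$; the proposition assumes neither commutativity, nor $\epsilon(i,i)=i$, nor $\dim B_{i}\geq2$. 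Take $\Gamma=\{0\}$, $B_{0}=\mathbb{R}$, $\epsilon(0,0)=0$ and $*_{00}$ the canonical isomorphism $\mathbb{R}\otimes\mathbb{R}\simeq\mathbb{R}$: every multiplication map is injective, $\ker(*_{ij})\times\{0\}=0$, and your construction produces no nonzero element of the pullback. So the step you label as ``the heart of the argument'' is not merely nonformal --- it is unavailable at the stated level of generality, and your proof only closes for the particular examples (function spaces, or $\epsilon(i,i)=i$ with components of dimension at least two) rather than for arbitrary nontrivial multiplicative structures.

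For comparison, the paper finishes differently: after the same reduction it asserts outright that for arbitrary linear maps $T:V\otimes W\rightarrow Z$ and $T':V'\otimes W'\rightarrow Z$ the pullback contains a copy of $V\oplus W\oplus V'\oplus W'$, with no appeal to kernels at all. Your componentwise description $\{(u,u'):\jmath(*_{ij}u)=\jmath'(*'_{ij}u')\}$, whose dimension is $\dim\ker T+\dim\ker T'+\dim(\operatorname{im}T\cap\operatorname{im}T')$, is exactly the right lens through which to test that assertion, and it shows that some additional input about the maps $*_{ij}$ or about the span $(\jmath,\jmath')$ is indispensable for the claimed lower bound. In short: you have correctly located the load-bearing point of the proposition and correctly handled all the categorical bookkeeping around it, but the missing ingredient --- a reason why the multiplication maps of a nontrivial multiplicative $\Gamma$-space must have nonzero kernel, or some substitute source of elements in the fibre product --- is not supplied, and the dimension count you sketch to produce a kernel element relies on hypotheses the proposition does not grant you.
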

\begin{proof}
Since $\gamma_{\Sigma}$ creates null objects, it is enough to work
in the category of $\Gamma$-vector spaces. On the other hand, since
a $\Gamma$-vector space $V=(V_{i})_{i\in\gamma}$ is nontrivial iff
at least one $V_{i}$ is nontrivial, it is enough to work with vector
spaces. Thus, just notice that if $T:V\otimes W\rightarrow Z$ and
$T':V'\otimes W'\rightarrow Z$ are arbitrary linear maps, then the
pullback between them contains a copy of $V\oplus W\oplus V'\oplus V'$.
\end{proof}
\begin{cor}
\label{corollary_vectorial_structure} Nontrivial multiplicative structures
are always nontrivially intersecting.
\end{cor}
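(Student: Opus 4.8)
The plan is to reduce the statement to Proposition \ref{prop_vectorial_structure} by producing, for an arbitrary pair of nontrivial multiplicative $\Gamma$-spaces $(B,\epsilon,*)$ and $(B',\epsilon',*')$, at least one vectorial intersection structure between them; once such a structure exists, Proposition \ref{prop_vectorial_structure} immediately gives $\#_{*,*';\mathbb{X}}\geq 1$, which is precisely the definition of $*$ and $*'$ being nontrivially intersecting. So the whole content is: \emph{any} two multiplicative $\Gamma$-spaces admit a vectorial intersection structure.

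First I would recall from Example \ref{vectorial_intersection_structure} that to build a vectorial intersection structure one needs a $\Gamma$-ambient $(\mathbf{Vec}_{\mathbb{R},\Gamma},\gamma)$ with $\gamma$ creating null objects, together with a base object $X$ and a span of monomorphisms; and from Example \ref{standard_intersection_structure} that the \emph{standard} choice $X=\operatorname{Span}_\Gamma(B\cup B')$ always works once we have a common $\Gamma$-set $S$ with $B\subset S$ and $B'\subset S$. But passing to the $F_\Sigma$-level is what the definition of intersection structure between multiplicative spaces actually requires, so I would instead apply the standard construction to the $(\Gamma\times\Gamma)$-spaces $B_\epsilon = B\circ\epsilon$ and $B'_{\epsilon'} = B'\circ\epsilon'$: take $S$ to be the underlying $(\Gamma\times\Gamma)$-set of the coproduct (disjoint union) $B_\epsilon \sqcup B'_{\epsilon'}$, so trivially $B_\epsilon \subset S$ and $B'_{\epsilon'}\subset S$, set $X = \operatorname{Span}_{\Gamma\times\Gamma}(B_\epsilon \cup B'_{\epsilon'})$ in $\mathbf{Vec}_{\mathbb{R},\Gamma\times\Gamma}$ with $\gamma_\Sigma$ the free-vector-space functor (which creates null objects), and let $\jmath,\jmath'$ be the evident inclusions. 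This is a proper vectorial intersection structure $\mathbb{X}$ between $(B,*,\epsilon)$ and $(B',*',\epsilon')$.

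Then I would simply invoke Proposition \ref{prop_vectorial_structure}: since $B$ or $B'$ is nontrivial (both are, by hypothesis), the intersection space $\operatorname{pb}(*,*';X)$ is nontrivial, i.e. $\#_{*,*';\mathbb{X}}(i,j)\geq 1$ for at least one pair $(i,j)$, so $\#_{*,*';\mathbb{X}}\geq 1$ and $*,*'$ have nontrivial intersection in $\mathbb{X}$; hence they are nontrivially intersecting. A short remark would be added that the construction only used the existence of coproducts in $\mathbf{Set}_{\Gamma\times\Gamma}$ and of the free functor to $\mathbf{Vec}_{\mathbb{R},\Gamma\times\Gamma}$, both of which hold unconditionally.

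The only genuinely delicate point — and the step I expect to need the most care — is bookkeeping rather than mathematics: making sure the common ambient $\Gamma$-set $S$ is taken \emph{after} applying $F_\Sigma$ (so that the multiplicative-space version of ``intersection structure'' in diagram \eqref{intersection_1} and the pullback square for $\operatorname{pb}(*,*';X)$ typecheck), and checking that the free-vector-space functor indeed creates null objects so that the structure qualifies as vectorial in the precise sense of Example \ref{vectorial_intersection_structure}. Everything else is a direct citation of Proposition \ref{prop_vectorial_structure}.
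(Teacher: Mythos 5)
Your proof takes essentially the same route as the paper: the paper's own argument is the one-line observation that the claim follows from Proposition \ref{prop_vectorial_structure} together with the existence of vectorial intersection structures (Examples \ref{vectorial_intersection_structure} and \ref{standard_intersection_structure}), and your proposal simply makes that existence step explicit at the $F_{\Sigma}$-level. One detail to correct: $\gamma_{\Sigma}$ cannot be the free-vector-space functor, because that functor sends the trivial Fr\'echet space (whose underlying set is the singleton $\{0\}$, not $\varnothing$) to a one-dimensional vector space and therefore does \emph{not} create null objects, which is exactly the property that Proposition \ref{prop_vectorial_structure} invokes in order to reduce the nontriviality check to $\Gamma$-vector spaces; you should instead take $\gamma_{\Sigma}$ to be the underlying-vector-space (forgetful) embedding and reserve $\operatorname{Span}_{\Gamma\times\Gamma}$ for building the base object $X$ out of the ambient $(\Gamma\times\Gamma)$-set, exactly as in Example \ref{standard_intersection_structure}. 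With that substitution the argument is complete and matches the paper's.
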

\begin{proof}
Straightforward from the last proposition and from the fact that vectorial
intersection structures exist.
\end{proof}
\begin{rem}
The proposition explains that the intersection space $\operatorname{pb}(*,*';X)$
can be nontrivial even if the intersection $B_{\epsilon}\cap_{X}B'_{\epsilon'}$
is trivial.
\end{rem}
Let $(B,*,+,\epsilon,\delta)$ and $(B',*',+',\epsilon',\delta')$
be two distributive $\Gamma$-spaces. A \emph{distributive intersection
structure} between them is an intersection structure $\mathbb{X}_{*,*'}$
between $*$ and $*'$ together with an intersection structure $X_{+,+'}$
between $+$ and $+'$ whose underlying $\Gamma_{\Sigma}$-ambient
are the same. In other words, it is a tuple $\mathbb{X}_{*,+}=(\mathbf{X},\gamma_{\Sigma},X_{*},X_{+},\jmath_{*},\jmath'_{*},\jmath_{+},\jmath'_{+})$
such that $(X_{*},\jmath_{*},\jmath'_{*})$ and $(X_{+},\jmath_{+},\jmath'_{+})$
are spans as in (\ref{intersection_1}). A \emph{full intersection
structure} between distributive $\Gamma$-spaces is a pair $\mathbb{X}=(\mathbb{X}_{0},\mathbb{X}_{*,+})$,
where $\mathbb{X}_{*,+}=(\mathbb{X}_{*},\mathbb{X}_{+})$ is a distributive
intersection structure and $\mathbb{X}_{0}=(\mathbf{X}_{0},\gamma,X,\imath,\imath')$
is an intersection structure between the $\Gamma$-spaces $B$ and
$B'$, whose underlying ambient category $\mathbf{X}_{0}$ is equal
to the ambient category $\mathbf{X}$ of $\mathbb{X}_{*}$ and $\mathbb{X}_{+}$.
We say that the triple $(\mathbf{X},\gamma,\gamma_{\Sigma})$ is a
\emph{full $\Gamma$-ambient} and that $(X,X_{*},X_{+})$ is the \emph{full
base object}.\emph{ }We also say that $\mathbb{X}$ is \emph{vectorial}
if both $\mathbb{X}_{0}$, $\mathbb{X}_{*}$ and $\mathbb{X}_{+}$
are vectorial. The \emph{full} \emph{intersection space} of the distributive
structures $B$ and $B'$ in the full intersection structure $\mathbb{X}$
is the triple consisting of the intersection spaces $B\cap_{X,\gamma}B'$
in $\mathbb{X}_{0}$, $\operatorname{pb}(*,*';X_{*},\gamma_{\Sigma})$
in $\mathbb{X}_{*}$ and $\operatorname{pb}(+,+';X_{+},\gamma_{\Sigma})$
in $\mathbb{X}_{+}$, denoted simply by $B\cap_{\mathbb{X}}B'$. It
is \emph{nontrivial} (and in this case we say that $B$ and $B'$
have \emph{nontrivial intersection in $\mathbb{X}$}) if each of the
three componentes are nontrivial. When $\mathbb{X}$ is vectorial,
the object representing $B\cap_{\mathbb{X}}B'$ in $\mathbf{NFre}_{\Gamma}\times\mathbf{NFre}_{\Gamma,\Sigma}\times\mathbf{NFre}_{\Gamma,\Sigma}$
will also be denoted by $B\cap_{\mathbb{X}}B'$, i.e, 
\[
B\cap_{\mathbb{X}}B'=(B\cap_{X}B',\operatorname{pb}(*,*';X_{*}),\operatorname{pb}(+,+';X_{+}))
\]

In some situations we will need to work with a special class of ambient
categories $\mathbf{X}$ which becomes endowed with a monoidal structure
that has a nice relation with some closure operator. We finish this
section introducing them. Let $(\mathbf{X},\circledast,1)$ be a monoidal
category, let $\operatorname{Ar}(\mathbf{X})$ be the arrow category
and recall that we have two functors $s,t:\operatorname{Ar}(\mathbf{X})\rightarrow\mathbf{X}$
which assign to each map $f$ its source $s(f)$ and target $t(f)$.
A \emph{weak closure operator} in $\mathbf{X}$ is a functor $\operatorname{cl}:\operatorname{Ar}(\mathbf{X})\rightarrow\operatorname{Ar}(\mathbf{X})$
such that $t(\operatorname{cl}(f))=t(t)$. If $s(f)=B$ and $t(f)=X$,
we write $\operatorname{cl}_{X}(B)$ to denote $s(\operatorname{cl}(f))$.
A \emph{weak closure structure} is a pair $(\operatorname{cl},\mu)$
given by a weak closure operator endowed with a natural transformation
$\mu:id\Rightarrow\operatorname{cl}$, translating the idea of embedding
a space onto its closure. The monoidal product $\circledast$ in $\mathbf{X}$
induces a functor $\overline{\circledast}$ in the arrow category
given $f\circledast f'$ on objects and as below on morphisms. $$
\xymatrix{X \ar[d]_f \ar[r]^{h_x} & X' \ar[d]^{f'}  \ar@{}[d]^{\quad \;\;\overline{\circledast}\;\;} & Z \ar[d]_g \ar[r]^{l_z} & Z' \ar[d]^{g'} \ar@{}[d]^{\quad \;\; \txt{\Large{=}}\quad } & \quad X\circledast Z \ar[d]_{f\circledast g} \ar[r]^{h_x \circledast l_z} & X'\circledast Z' \ar[d]^{f'\circledast g'}  \\
Y \ar[r]_{h_y} & Y'& W \ar[r]_{l_w} & W' & \quad Y\circledast W \ar[r]_{h_y \circledast l_w} & Y'\circledast W'}
$$

A \emph{lax closure operator} in $\mathbf{X}$ is a weak closure operator
which is lax monoidal relative to $\overline{\circledast}$. This
means that for any pair of arrows $(f,f')$ we have a corresponding
arrow morphism $\phi_{f,f'}:\operatorname{cl}(f)\circledast\operatorname{cl}(f')\rightarrow\operatorname{cl}(f\circledast f')$
which is natural in $f,f'$. In particular, if $s(f)=B$, $s(f')=B'$,
$t(f)=X$ and $t(f')=X'$, we have a morphism $\phi:\operatorname{cl}_{X}(B)\circledast\operatorname{cl}_{X'}(B')\rightarrow\operatorname{cl}_{X\circledast X'}(B\circledast B')$.
A \emph{lax closure structure} is a weak closure structure $(\operatorname{cl},\mu)$
whose weak closure operator is actually a lax closure operator $(\operatorname{cl},\phi)$
such that the first diagram below commutes, meaning that $\phi$ and
$\mu$ are compatible. If $f$ and $f'$ have source/target as above,
we have in particular the second diagram.\begin{equation}{\label{closure_1}
\xymatrix{ & \operatorname{cl}(f)\circledast \operatorname{cl}(f') \ar[d]^{\phi_{f,f'}} &&& \operatorname{cl}_{X}(B)\circledast \operatorname{cl}_{X'}(B') \ar[d]^{\phi_{B,B'}}  \\
\ar[ru]^{\mu_f \circledast \mu_{f'}} f\circledast f' \ar[r]_-{\mu_{f\circledast f'}} & \operatorname{cl}(f\circledast f') & B\circledast B' \ar[rru]^{\mu_B \circledast \mu_{B'}} \ar[rr]_-{\mu_{B\circledast B'}}&& \operatorname{cl}_{X\circledast X'}(B\circledast B')}}
\end{equation}

A \emph{monoidal $\Gamma$-ambient }is a $\Gamma$-ambient $(\mathbf{X},\gamma)$
whose ambient category $\mathbf{X}$ is a monoidal category $(\mathbf{X},\circledast,1)$
such that $\gamma:\mathbf{NFre}_{\Gamma}\rightarrow\mathbf{X}$ is
a strong monoidal functor in the sense of \citep{maclane,aguiar},
i.e, lax and oplax monoidal in a compatible way. Thus, for any two
$\Gamma$-spaces $B,B'$ we have an isomorphism $\psi_{B,B'}:\gamma(B)\circledast\gamma(B')\simeq\gamma(B\otimes B')$.
Let $\mathbf{X}$ be a monoidal $\Gamma$-ambient $(\mathbf{X},\gamma,\circledast)$.
A \emph{closure structure} for a $\Gamma$-space $B$ in $\mathbf{X}$
is a lax closure structure $(\operatorname{cl},\mu,\phi)$ in $\mathbf{X}$
such that any morphism $f:\gamma(B)\rightarrow X$ (not necessarily
a subobject) admits an extension $\hat{\mu}_{f}$ relative to $\mu_{f}:\gamma(B)\rightarrow\operatorname{cl}_{X}(\gamma(B))$
as in the diagram below. A \emph{$\Gamma$-space with closure in $\mathbf{X}$}
is $\Gamma$-space $B$ endowed with a closure structure $\mathbf{c}=(\operatorname{cl},\mu,\phi)$
in $\mathbf{X}$. Notice that the diagrams above make perfect sense
in the category $\mathbf{NFre}_{\Gamma,\Sigma}$ of $(\Gamma,\epsilon)$-spaces,
so that we can also define $(\Gamma,\epsilon)$-spaces with closure
in a monoidal $\Gamma$-ambient $(\mathbf{X},\gamma,\circledast)$.
Let $\mathbf{NFre}_{\Gamma,\Sigma,\mathbf{c}}(\mathbf{X})\subset\mathbf{NFre}_{\Gamma,\Sigma}$
be the full subcategory of those $(\Gamma,\epsilon)$-spaces.\begin{equation}{\label{closure_2}
\xymatrix{& X \\
\ar[ru]^f \gamma (B) \ar[r]_{\mu_f} & \operatorname{cl}_{X}(\gamma (B)) \ar@{-->}[u]_{\hat{\mu}_{f}}}}
\end{equation}

\section{$C_{n,\beta}^{k,\alpha}$-Presheaves \label{sec_C_k_a_b}}
\begin{itemize}
\item Let $\Gamma$ be a set such that $\Gamma\cap\mathbb{Z}_{\geq0}\neq\varnothing$
and let $\Gamma_{\geq0}=\Gamma\cap\mathbb{Z}_{\geq0}$. In the following
we will condiser only $\Gamma_{\geq0}$-spaces.
\end{itemize}
$\quad\;\,$We begin by introducting a presheaf version of the previous
concepts. A \emph{presheaf of $\Gamma_{\geq0}$-spaces in $\mathbb{R}^{n}$}
is just a presheaf $B:\operatorname{Open}(\mathbb{R}^{n})^{op}\rightarrow\mathbf{NFre}_{\Gamma_{\geq0}}$
of $\Gamma_{\geq0}$-spaces. Let $\mathbf{B}_{n}$ be the presheaf
category of them. Given a $\Gamma_{\geq0}$-ambient $(\mathbf{X},\gamma)$,
let $X:\operatorname{Open}(\mathbb{R}^{n})^{op}\rightarrow\mathbf{X}$
be a presheaf, i.e, let $X\in\operatorname{Psh}(\mathbb{R}^{n};\mathbf{X})$.
We say that $B\in\mathbf{B}_{n}^{k}$ is a \emph{subobject of $(X,\gamma)$}
if it becomes endowed with a natural trasformation $\imath:\gamma\circ B\hookrightarrow X$
which is objectwise a monomorphism. As in the previous section, let
$\operatorname{Span}_{\Gamma}(X,\gamma)$ be the category of spans
of those subobjects. We have functors 
\[
\operatorname{Base}:\operatorname{Span}_{\Gamma}(X,\gamma)\rightarrow\operatorname{Psh}(\mathbb{R}^{n};\mathbf{X})\quad\text{and}\quad\operatorname{Pb}:\operatorname{Span}_{\Gamma}(X,\gamma)\rightarrow\operatorname{Psh}(\mathbb{R}^{n};\mathbf{X})
\]
which to each span $(\imath,\imath')$ assigns the \emph{base presheaf}
$\operatorname{Base}(\imath,\imath')=X$ and which evaluate the pullback
of $(\imath,\imath')$, i.e, $\operatorname{Pb}(\imath,\imath')(U)=B(U)\cap_{X(U),\gamma}B'(U)$.
In the following we will write $\operatorname{Pb}(\imath,\imath')\equiv B\cap_{X,\gamma}B'$
whenever there is no risck of confusion. We say that a span $(\imath,\imath')$
is \emph{proper }if it is objectwise proper, i.e, if there exists
$L\in\mathbf{B}_{n}$ such that $\gamma\circ L\simeq B\cap_{X,\gamma}B'$.
If exists, then $L$ is unique up to natural isomorphisms and will
be denoted by $B\cap_{X}B'$. Furthermore, we also demand that there
exists $u:B\cap_{X}B'\Rightarrow B$ and $u':B\cap_{X}B'\Rightarrow B'$
such that $\gamma\circ u\simeq u_{\gamma}$ and $\gamma\circ u'\simeq u'_{\gamma}$.
We call $\mathbb{X}=(\mathbf{X},\gamma,X,\imath,\imath')$ and $B\cap_{X,\gamma}B'$
the \emph{intersection structure presheaf} (ISP) and the \emph{intersection
presheaf }between $B$ and $B'$ in $(X,\gamma)$, respectively. We
say that $B$ and $B'$ have \emph{nontrivial intersection in $\mathbb{X}$}
if objectwise they have nontrivial intersection, i.e, if $(B\cap_{X}B')(U)$
have positive real dimension for every $U$.

If $(B,\epsilon)$ and $(B',\epsilon')$ are now presheaves of $(\Gamma_{\geq0},\epsilon)$-spaces
in $\mathbb{R}^{n}$, i.e, if they take values in $\mathbf{NFre}_{\Gamma_{\geq0},\Sigma}$
instead of in $\mathbf{NFre}_{\Gamma_{\geq0}}$, let $\mathbf{B}_{n,\Sigma}$
be the associated presheaf category. We can apply the same strategy
in order to define the category $\operatorname{Span}_{\Gamma,\Sigma}(X,\gamma_{\Sigma})$
of spans of subobjects of $(X,\gamma_{\Sigma})$. If $\jmath:\gamma_{\Sigma}\circ B_{\epsilon}\Rightarrow X$
and $\jmath':\gamma_{\Sigma}\circ B'_{\epsilon'}\Rightarrow X$ are
two of those spans, we define an \emph{intersection structure presheaf
}between $(B,\epsilon)$ and $(B',\epsilon')$ as the tuple \emph{$\mathbb{X}=(\mathbf{X},\gamma_{\Sigma},X,\jmath,\jmath')$}.
Similarly, pullback and projection onto the base presheaf define functors
\[
\operatorname{Base}_{\Sigma}:\operatorname{Span}_{\Gamma,\Sigma}(X,\gamma_{\Sigma})\rightarrow\operatorname{Psh}(\mathbb{R}^{n};\mathbf{X})\quad\text{and}\quad\operatorname{Pb}_{\Sigma}:\operatorname{Span}_{\Gamma,\Sigma}(X,\gamma_{\Sigma})\rightarrow\operatorname{Psh}(\mathbb{R}^{n};\mathbf{X}).
\]
We will write $\operatorname{Pb}_{\Sigma}(\jmath,\jmath')\equiv B_{\epsilon}\cap_{X,\gamma_{\Sigma}}B'_{\epsilon'}$.
If the span is proper, the representing object in $\mathbf{B}_{n,\Sigma}$
is denoted simply by $B_{\epsilon}\cap_{X}B'_{\epsilon'}$, so that
$\gamma_{\Sigma}\circ(B_{\epsilon}\cap_{X}B'_{\epsilon'})\simeq B_{\epsilon}\cap_{X,\gamma_{\Sigma}}B'_{\epsilon'}$.
When $(B,\epsilon,*)$ and $(B',\epsilon',*')$ are presheaves of
multiplicative $\Gamma_{\geq0}$-spaces, meaning that they take values
in $\mathbf{NFre}_{\Gamma_{\geq0},*}$, we denote their presheaf category
$\mathbf{B}_{n,*}$ and define the \emph{intersection space presheaf
}for an intersection structure presheaf $\mathbb{X}_{*,*'}=(\mathbf{X},\gamma_{\Sigma},X,\jmath,\jmath')$
between $*$ and $*'$ as the presheaf which objectwise is the pullback
below in the category $\operatorname{Psh}(\mathbb{R}^{n};\mathbf{X})$.
In other words, it is $\operatorname{Pb}_{\Sigma}((\jmath\circ\gamma_{\Sigma}\circ*,\jmath'\circ\gamma_{\Sigma}\circ*'))$.
If these spans are proper we denote the representing object in $\mathbf{B}_{n,\Sigma}$
by $\operatorname{pb}(*,*';X)$ and we say that $*$ and $*'$ are
\emph{nontrivially intersecting} in $\mathbb{X}$ if that representing
object has objectwise positive dimension.$$
\xymatrix{\operatorname{pb}(*,*';X,\gamma_{\Sigma})  \ar@{=>}[rr] \ar@{=>}[dd]  && \gamma _{\Sigma} \circ (B'_{\epsilon '} \otimes B'_{\epsilon '}) \ar@{=>}[d]^{\gamma _{\Sigma} \circ *'} \\
 & B_{\epsilon} \cap_{X,\gamma_{\Sigma}} B'_{\epsilon '} \ar@{=>}[d] \ar@{=>}[r]  & \gamma _{\Sigma}\circ B'_{\epsilon}   \ar@{=>}[d]^{\jmath '} \\
  \gamma _{\Sigma} \circ (B_{\epsilon} \otimes B_{\epsilon}) \ar@{=>}[r]_{\gamma _{\Sigma} \circ *} & \gamma _{\Sigma}\circ B_{\epsilon}  \ar@{=>}[r]_-{\jmath}  & X}
$$

Let us now consider presheaves of distributive $\Gamma_{\geq0}$-spaces
in $\mathbb{R}^{n}$, i.e, which assigns to each open subset $U\subset\mathbb{R}^{n}$
a corresponding distributive $\Gamma_{\ge0}$-space. Let $\mathbf{B}_{n,*,+}$
be the presheaf category of them. Define a \emph{full intersection
structure presheaf }(full ISP) as a tuple 
\[
\mathbb{X}=(\mathbf{X},\gamma,\gamma_{\Sigma},X,X_{*},X_{+},\imath,\imath',\jmath_{*},\jmath'_{*},\jmath_{+},\jmath'_{+}),
\]
where $(\mathbf{X},\gamma,\gamma_{\Sigma})$ is a full $\Gamma$-ambient,
$X,X_{*},X_{+}\in\operatorname{Psh}(\mathbb{R}^{n};\mathbf{X})$ are
presheaves and $(\imath,\imath')$, $(\jmath_{*},\jmath'_{*})$ and
$(\jmath_{+},\jmath'_{+})$ are spans of subobjects of $X$, $X_{*}$
and $X_{+}$, respectively. Given $B,B'\in\mathbf{B}_{n,*,+}$ we
say that a full ISP $\mathbb{X}$ is \emph{between $B$ and $B'$}
if $B$ is on the domain of $\imath$, $\jmath_{*}$ and $\jmath_{+}$,
while $B'$ is on the domain of $\imath'$, $\jmath'_{*}$ and $\jmath'_{+}$.
Thus, e.g, $\imath:\gamma\circ B\Rightarrow X$, $\jmath_{*}:\gamma_{\Sigma}\circ B_{\epsilon}\Rightarrow X_{*}$
and $\jmath'_{+}:\gamma_{\Sigma}\circ B'_{\epsilon'}\Rightarrow X_{+}$.
We can also write $\mathbb{X}$ as $\mathbb{X}=(\mathbb{X}_{0},\mathbb{X}_{*},\mathbb{X}_{+})$,
where 
\[
\mathbb{X}_{0}=(\mathbf{X},\gamma,X,\imath,\imath'),\quad\mathbb{X}_{*}=(\mathbf{X},\gamma_{\Sigma},X_{*},\jmath_{*},\jmath'_{*})\quad\text{and}\quad\mathbb{X}_{+}=(\mathbf{X},\gamma_{\Sigma},X_{+},\jmath_{+},\jmath'_{+})
\]
are ISP between $B$ and $B'$, between $*$ and $*'$, and between
$+$ and $+'$, respectively. The \emph{full intersection presheaf
}between $B$ and $B'$ in a full ISP $\mathbb{X}$ is the triple
consisting of the intersection presheaves between $B$ and $B'$ in
$\mathbb{X}_{0}$, between $*$ and $*'$ in $\mathbb{X}_{*}$ and
between $+$ and $+'$ in $\mathbb{X}_{+}$. When $\mathbb{X}$ is
proper, the corresponding full intersection presheaf has a representing
object $B\cap_{\mathbb{X}}B'$ in $\mathbf{B}_{n}\times\mathbf{B}_{n,\Sigma}\times\mathbf{B}_{n,\Sigma}$,
given by 
\[
B\cap_{\mathbb{X}}B'=(B\cap_{\mathbb{X}_{0}}B',\operatorname{pb}(*,*';X_{*}),\operatorname{pb}(+,+';X_{+})).
\]
We say that $B$ and $B'$ have \emph{nontrivial intersection }in
a proper full ISP $\mathbb{X}$ if each of the three presheaves in
$B\cap_{\mathbb{X}}B'$ are nontrivial, i.e, have objectwise positive
real dimension.
\begin{example}
By means of varying $U$ and restricting to $\Gamma_{\geq0}$, for
every fixed $n\geq0$, each multiplicative $\Gamma$-space in Examples
\ref{first_example_additive}-\ref{convolution_multiplicative} defines
a presheaf of multiplicative $\Gamma_{\geq0}$-spaces in $\mathbb{R}^{n}$.
Furthermore, by the same process, from Example \ref{first_example_distributive}
and Example \ref{second_example_distributive} we get presheaves of
distributive $\Gamma_{\geq0}$-structures in $\mathbb{R}^{n}$. Vectorial
ISP can be build by following Example \ref{vectorial_intersection_structure}
and Example \ref{standard_intersection_structure}.
\end{example}
Let us introduce $C_{n,\beta}^{k,\alpha}$-presheaves, which will
be the structural presheaves appearing in the definition of $B_{\alpha,\beta}^{k}$-manifold.
Le $K',B'\in\mathbf{B}_{n}$. An \emph{action} of $K'$ in $B'$ is
just a morphism $\kappa':K'\otimes B'\Rightarrow B'$, where the tensor
product is defined objectwise. A \emph{morphism} between actions $\kappa:K\otimes B\Rightarrow B$
and $\kappa':K'\otimes B'\Rightarrow B'$ is just a morphism $(f,g)$
in the arrow category such that $f=\ell\otimes g$. More precisely,
it is a pair $(\ell,g)$, where $\ell:K\Rightarrow K'$ and $g:B\Rightarrow B'$
are morphisms in $\mathbf{B}_{n}$ such that the diagram below commutes.
Let $\mathbf{Act}_{n}$ be the category of actions and morphisms between
them.$$
\xymatrix{K'\otimes B' \ar@{=>}[r]^-{\kappa '} & B' \\
K\otimes B \ar@{=>}[u]^{\ell \otimes g} \ar@{=>}[r]_-{\kappa} & B \ar@{=>}[u]_{g}}
$$

Given $B,B'\in\mathbf{B}_{n}$, an action $\kappa':K'\otimes B'\Rightarrow B'$
and a proper ISP $\mathbb{X}_{0}$ between $B$ and $B'$, we say
that $B$ is \emph{compatible }with $\kappa'$ in $\mathbb{X}_{0}$
if there exists an action $\kappa:A\otimes B\cap_{\mathbb{X}_{0}}B'\Rightarrow B\cap_{\mathbb{X}_{0}}B'$
in the intersection presheaf $B\cap_{\mathbb{X}_{0}}B'$ and a morphism
$\ell:A\Rightarrow K'$ such that $(\ell,u')$ is a morphism of actions,
where $u':B\cap_{\mathbb{X}_{0}}B'\Rightarrow B'$ is the morphism
such that $\gamma\circ u'\simeq u'_{\gamma}$, existing due to the
properness hypothesis. If it is possible to choose $(A,\ell)$ such
that is objectwise a monomorphism, $B$ is called \emph{injectively
compatible }with $\kappa'$ in $\mathbb{X}_{0}$. Given $B,B'\in\mathbf{B}_{n,*,+}$
we say that $B$ is \emph{compatible with $B'$} in a proper full
ISP $\mathbb{X}=(\mathbb{X}_{0},\mathbb{X}_{*},\mathbb{X}_{+})$ if:
\begin{enumerate}
\item the proper full ISP $\mathbb{X}$ is between them;
\item they have nontrivial intersection in $\mathbb{X}$;
\item there exists an action $\kappa':K'\otimes B'\Rightarrow B'$ in $B'$
such that $B$ is injectively compatible with $\kappa'$ in $\mathbb{X}_{0}$.
\end{enumerate}
\begin{example}
\label{exp_action_presheaf} Suppose that $C$ is a presheaf of increasing
$\Gamma_{\geq0}$-Fr\'echet algebras, i.e, such that $C(U)$ is a
$\Gamma_{\geq0}$-space for which each $C(U)_{i}$ is a Fr\'echet
algebra such that there exists a continuous embedding of topological
algebras $C(U)_{i}\subset C(U)_{j}$ if $i\leq j$. Let $p:\Gamma_{\geq0}\rightarrow\Gamma_{\geq0}$
be any function such that $i\leq p(i)$. For $K'=C$ and $B'=C_{p}$
we have an action $\kappa':K'\otimes B'\Rightarrow B'$ such that
$\kappa'_{U,i}:C(U)_{i}\otimes C(U)_{p(i)}\rightarrow C(U)_{p(i)}$
is obtainned by embedding $C(U)_{i}$ in $C(U)_{p(i)}$ and then using
the Fr\'echet algebra multiplication of $C(U)_{p(i)}$. In other
words, under the hypothesis $C$ is actually a presheaf of multiplicative
$\Gamma_{\geq0}$-spaces, with $\epsilon(i,j)=\max(i,j)$, so that
$\kappa'_{U,i}=*_{i,p(i)}$. An analogous discussion holds for decreasing
presheaves.
\end{example}
$\quad\;\,$As a particular case of the last example, we see that
pointwise multiplication induces an action of the presheaf $C^{k}$,
such that $C^{k}(U)_{i}=C^{k}(U)$ for every $U\subset\mathbb{R}^{n}$
and $i\in\Gamma_{\geq0}$, on the presheaf $C^{k-\beta}$, such that
$C^{k-\beta}(U)_{i}=C^{k-\beta(i)}(U)$, where $\beta:\Gamma_{\geq0}\rightarrow[0,k]$
is some fixed integer function. Given $\alpha:\Gamma_{\geq0}\rightarrow\Gamma_{\ge0}$
and $\beta:\Gamma_{\geq0}\rightarrow[0,k]$, we define a \emph{$C_{n,\beta}^{k,\alpha}$-presheaf}
\emph{in a proper full ISP $\mathbb{X}$} as a presheaf $B\in\mathbf{B}_{n,*,+}$
which such that $B_{\alpha}$ is compatible with $C^{k-\beta}$ in
$\mathbb{X}$. Thus, $B$ is a $C_{n,\beta}^{k,\alpha}$-presheaf
in $\mathbb{X}$ if for every $U\subset\mathbb{R}^{n}$:
\begin{enumerate}
\item \emph{the intersection spaces $B_{\alpha}(U)\cap_{X(U)}C^{k-\beta}(U)$,
etc., have positive real dimension}. This means that at least in some
sence (i.e, internal to $X$) the abstract spaces defined by $B$
have a concrete interpretation in terms of differentiable functions
satisfying some further properties/regularity. Furthermore, under
this interpretation, the sum and the multiplication in $B$ are the
pointwise sum and multiplication of differentiable functions added
of properties/regularity;
\item \emph{there exists a subspace $A(U)\subset C^{k}(U)$ and for every
$i\in\Gamma_{\geq0}$ a morphism $\star_{U,i}$ making commutative
the diagram below}.\emph{ }This means that if we regard the abstract
multiplication of $B$ as pointwise multiplication of functions with
additional properties, then that multiplication becomes closed under
a certain set of smooth functions.\begin{equation}{\label{diagram_star}
\xymatrix{C^{k}(U)\otimes C^{k-\beta(i)}(U) \ar[r]^-{\cdot_{U,i}} & C^{k-\beta(i)}(U) \\
\ar@{^(->}[u] A(U) \otimes B_{\alpha (i)}(U) \cap_{X(U)} C^{k-\beta(i)}(U) \ar@{-->}[r]_-{\star_{U,i} } & B_{\alpha(i)}(U) \cap_{X(U)} C^{k-\beta(i)}(U) \ar@{^(->}[u]}}
\end{equation}
\end{enumerate}
$\quad\;\,$If, in addition, the intersection structure $\mathbb{X}_{0}$
between $B_{\alpha}$ and $C^{k-\beta}(U)$ is such that the image
of each $u':B_{\alpha(i)}\cap_{X(U)}C^{k-\beta(i)}(U)\rightarrow C^{k-\beta(i)}(U)$
is closed, we say that $B$ is a \emph{strong $C_{n,\beta}^{k,\alpha}$-presheaf}
\emph{in $\mathbb{X}$}. Finally we say that $B$ is a \emph{nice
$C_{n,\beta}^{k,\alpha}$-presheaf in $\mathbb{X}$} if it is possible
to choose $A$ such that $\dim_{\mathbb{R}}A(U)\cap C_{b}^{\infty}(U)\geq1$,
where $C_{b}^{\infty}(U)$ is the space of bump functions. Notice
that being nice does not depends on the ISP $\mathbb{X}$.
\begin{example}
From Example \ref{exp_action_presheaf} and Proposition \ref{prop_vectorial_structure}
we see that the presheaf of distributive structures $C^{k-}$ is a
$C_{n,\beta}^{k,\alpha}$-presheaf in the full ISP which is objectwise
the standard vectorial intersection structure of Example \ref{standard_intersection_structure},
with $A=C^{\infty}$. Since $C_{b}^{\infty}(U)\subset C^{\infty}(U)$
for every $U$, we conclude that $C^{k-}$ is actually a nice $C_{n,\beta}^{k,\alpha}$-presheaf.
\begin{example}
The same arguments of the previous example can be used to show that
the presheaf $L$ of distributive $\Gamma_{\geq0}$-spaces $L(U)_{i}=L^{i}(U)$,
endowed with the distributive structure given by pointwise addition
and multiplication, is a nice $C_{n,\beta}^{k,\alpha}$-presheaf in
the standard ISP, for $A(U)=C_{b}^{\infty}(U)$ or $A(U)=\mathcal{S}(U)$,
where $\mathcal{S}(U)$ is the Schwarz space \citep{key-1}. An analogous
conclusion is valid if we replace pointwise multiplication with convolution
product.
\end{example}
\end{example}
Let $\mathbf{C}_{n,\beta}^{k,\alpha}$ be the category whose objects
are pairs $(B,\mathbb{X})$, where $\mathbb{X}$ is a proper full
ISP and $B\in\mathbf{B}_{n,*,+}$ is a presheaf of distributive $\Gamma_{\geq0}$-spaces
which is a $C_{n,\beta}^{k,\alpha}$-presheaf in $\mathbb{X}$. Morphisms
$\xi:(B,\mathbb{X})\Rightarrow(B',\mathbb{X}')$ are just morphisms
$\xi:B\Rightarrow B'$ in $\mathbf{B}_{n,*,+}$. We have a projection
$\pi_{n}:\mathbf{C}_{n,\beta}^{k,\alpha}\rightarrow\mathbf{Cat}$
assigning to each pair $(B,\mathbb{X})$ the ambient category $\mathbf{X}$
in the full $\Gamma$-ambient $(\mathbf{X},\gamma,\gamma_{\Sigma})$
of $\mathbb{X}$, where $\mathbf{Cat}$ is the category of all categories.
Notice that such projection really depends only on $n$ (and not on
$\alpha$, $\beta$ and $k$). Let $\mathbf{Cat}_{n}$ be the image
of $\pi_{n}$ and for each $\mathbf{X}\in\mathbf{Cat}_{n}$ let $\mathbf{C}_{n,\beta}^{k,\alpha}(\mathbf{X})$
be the preimage $\pi_{n}^{-1}(\mathbf{X})$. Closing the section,
let us study the dependence of the fiber $\pi_{n}^{-1}(\mathbf{X})$
on the variables $k,n$ and $\beta$. We need some background.

Let $\mathbb{X}$ and $\mathbb{Y}$ be two (not necessarily proper)
ISP in a $\Gamma$-ambient $(\mathbf{X},\gamma)$. A \emph{connection
}between them is a transformation $\xi:X\Rightarrow Y$ between the
underlying base presheaves such that for every two presheaves $B,B'$
of $\Gamma$-subspaces for which both $\mathbb{X}$ and $\mathbb{Y}$
are between $B$ and $B'$, the diagram below commutes. Thus, by universality
we get the dotted arrow.\begin{equation}{\label{connection_int_presheaf}
\xymatrix{ \ar@{==>}[rd]^{\overline{\xi}} \ar@{=>}[dd] B \cap _{X,\gamma} \gamma \circ B' \ar@{=>}[rr] &&  \gamma \circ B' \ar@{=>}[dd]^{\jmath'} \ar@{=>}@/^{1cm}/[ddd]^{\ell '} \\
& \ar@{=>}[ld] \ar@{=>}[ru] B \cap _{Y, \gamma} B' \\
\gamma \circ B \ar@{=>}@/_{0.5cm}/[rrd]_{\ell} \ar@{=>}[rr]_{\jmath} && X \ar@{=>}[d] \\
&& Y}}
\end{equation}

Suppose now that $\mathbf{X}$ is a monoidal $\Gamma$-ambient and
that $B$ and $B'$ are presheaves of $(\Gamma,\mathbf{c})$-spaces
in $\mathbf{X}$, i.e, which objectwise belong to $\mathbf{NFre}_{\Gamma,\mathbf{c}}(\mathbf{X})$
(recall the notation in the end of Section \ref{sec_Gamma_spaces}).
Notice that for any connection $\xi:X\Rightarrow Y$ we have the first
commutative diagram below, where $\mathbf{c}=(\operatorname{cl},\mu)$
is the closure structure. We say that a presheaf of $\Gamma$-spaces
$Z$ is \emph{central }for\emph{ $(B,B',\mathbb{X},\mathbb{Y},\mathbf{c})$
}if it becomes endowed with an embedding $\imath:B\cap_{X,\gamma}B'\Rightarrow Z$
and a morphism $\eta$ such that the second diagram below commutes.
\begin{center}
$
\xymatrix{\ar@{=>}[dd]_{\overline{\xi}} B \cap_{X,\gamma} B' \ar@{=>}[rr]^-{\mu \circ \imath_{X}}  && \operatorname{cl}_{X}(B \cap_{X,\gamma} B') \ar@{=>}[dd]^{\operatorname{cl} \circ \overline{\xi} }\\
\\
B\cap_{Y, \gamma} B'\ar@{=>}[rr]_-{\mu \circ \imath_{Y}} && \operatorname{cl}_{Y}(B \cap_{Y,\gamma} B')}
$\quad $
\xymatrix{\ar@{=>}[rd]^{\imath} \ar@{=>}[dd]_{\overline{\xi}} B \cap_{X,\gamma } B' \ar@{=>}[rr]^-{\mu \circ \imath_{X}}  && \operatorname{cl}_{X}(B\cap_{X,\gamma } B') \ar@{=>}[dd]^{\operatorname{cl} \circ \overline{\xi} }\\
 & Z \ar@{==>}[rd]^{\eta}\\
B\cap_{Y, \gamma} B'\ar@{=>}[rr]_-{\mu \circ \imath_{Y}} && \operatorname{cl}_{Y}(B\cap_{Y,\gamma } B'_{\epsilon '})}
$\quad 
\par\end{center}
\begin{rem}
If $\mathbf{X}$ is complete/cocomplete then $\eta$ always exists,
at least up to universal natural transformations. Indeed, notice that
$\eta$ is actually the extension of $\mu_{Y}\circ\overline{\xi}$
by $\imath$ (equivalently, the extension of $\operatorname{cl}\circ\overline{\xi}\circ\mu_{X}$
by $\imath$), so that due to completeness/cocompleteness we can take
the left/right Kan extension \citep{maclane,handbook}.
\begin{rem}
In an analogous way we can define connections between presheaves of
$(\Gamma,\epsilon)$-spaces and central $(\Gamma,\epsilon)$-presheaves
relative to some closure structure. These can also be regarded as
Kan extensions, so that if $\mathbf{X}$ is complete/cocomplete they
will also exist up to universal natural transformations.
\end{rem}
\end{rem}

\section{Theorem A \label{sec_thm_A}}

$\quad\;\,$We say that a ISP $\mathbb{X}=(\mathbf{X},\gamma,X,\imath,\imath'$)
is \emph{monoidal }if the underlying $\Gamma$-ambient $(\mathbf{X},\gamma)$
is monoidal. A full ISP $\mathbb{X}=(\mathbb{X}_{0},\mathbb{X}_{*},\mathbb{X}_{+})$
is \emph{partially monoidal} if the ISP $\mathbb{X}_{0}$ is monoidal.
Fixed monoidal $\Gamma$-ambient $(\mathbf{X},\gamma,\circledast)$
and given an integer function $p:\Gamma_{\geq0}\rightarrow\mathbb{Z}_{\geq0}$
such that $p(i)\leq k-\beta(i)$ for each $i\in\Gamma_{\geq0}$, define
a $p$\emph{-structure }on a $C_{n,\beta}^{k,\alpha}$-presheaf $B$
in a partially monoidal full proper ISP $\mathbb{X}=(\mathbb{X}_{0},\mathbb{X}_{*},\mathbb{X}_{+})$,
as another non-necessarily proper monoidal ISP $\mathbb{Y}_{0}=(\mathbf{X},\gamma,Y)$,
together with a closure structure $\mathbf{c}=(\operatorname{cl},\mu,\phi)$
and a connection $\xi:Y_{0}\Rightarrow X$ such that $Z=B_{\alpha}\cap_{X,\gamma}C^{p}$
is central for $(B_{\alpha},C^{k-\beta},\mathbb{X}_{0},\mathbb{Y}_{0},\mathbf{c})$
relative to the canonical embedding $\imath:B_{\alpha}\cap_{X,\gamma}C^{k-\beta}\hookrightarrow Z$
induced by universality of pullbacks applied to $C^{k-\beta}\hookrightarrow C^{p}$,
which exists due to the condition $p\leq k-\beta$. Define a \emph{$C_{n,\beta;p}^{k,\alpha}$-presheaf
in $\mathbb{X}$} as a $C_{n,\beta}^{k,\alpha}$-presheaf in $B$
in $\mathbb{X}$ endowed with a $p$-structure and let $\mathbf{C}_{n,\beta;p}^{k,\alpha}$
be the full subcategory of them. Furthermore, let $\mathbf{C}_{n,\beta;p}^{k,\alpha}(\mathbf{X}_{\operatorname{full}})\subset\mathbf{C}_{n,\beta}^{k,\alpha}(\mathbf{X})$
the corresponding full subcategories of pairs $(B,\mathbb{X})$ for
which $\pi_{n}(B,\mathbb{X})=\mathbf{X}$ and $\gamma$ is a full
functor.

We can now prove Theorem A. It will be a consequence of the following
more general theorem:
\begin{thm}
\label{prop_embeddings}For any category with pullbacks $\mathbf{X}$,
there are full embeddings
\end{thm}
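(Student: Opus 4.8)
\emph{Plan of proof.} The three embeddings are the fibrewise refinements of (1)--(3) of Theorem A over a fixed $\mathbf{X}$; the hypothesis that $\mathbf{X}$ has pullbacks is exactly what makes all the intersection presheaves $B\cap_{X,\gamma}B'$, $\operatorname{pb}(*,*';X)$, etc.\ well-defined, so that the categories $\mathbf{C}_{n,\beta}^{k,\alpha}(\mathbf{X})$ make sense. In each case I would construct a functor between the appropriate categories of pairs $(B,\mathbb{X})$ which keeps (cases (1),(3)) or pushes forward along $f$ (case (2)) the underlying distributive presheaf $B$ and replaces $\mathbb{X}$ by a canonically derived proper full ISP, and which on morphisms is the identity (cases (1),(3)) or $f_{*}$ (case (2)). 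Since a morphism $(B,\mathbb{X})\Rightarrow(B',\mathbb{X}')$ in any of these categories is by definition nothing but a morphism $B\Rightarrow B'$ in $\mathbf{B}_{n,*,+}$ (or $\mathbf{B}_{r,*,+}$), full faithfulness is automatic in cases (1),(3), and in case (2) it reduces to full faithfulness of $f_{*}$ on presheaf categories. The real work is (i) well-definedness --- the derived object lies in the target --- and (ii) injectivity on objects up to isomorphism, which is immediate in all three cases since $B$ is recovered from the image (for (2), because $f$ is injective). Theorem A then follows by letting $\mathbf{X}$ range over $\mathbf{Cat}_{n}$: a family of full embeddings over the fibres of $\pi_{n}$ that is compatible with the transition functors glues to a full embedding of the total categories, and for (2) one additionally uses that the $f_{*}$'s commute with change of $n$.

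\emph{Cases (1) and (3).} Here $B$ carries, besides its $C_{n,\beta}^{k,\alpha}$-structure in $\mathbb{X}=(\mathbb{X}_{0},\mathbb{X}_{*},\mathbb{X}_{+})$, a $p$-structure --- with $p$ the integer function read off from $l\le k$ in (1), and $p=\beta'\le\beta$ in (3) --- i.e.\ a monoidal ISP $\mathbb{Y}_{0}=(\mathbf{X},\gamma,Y)$, a closure structure $\mathbf{c}=(\operatorname{cl},\mu,\phi)$, and a connection $\xi\colon Y_{0}\Rightarrow X$ making $Z=B_{\alpha}\cap_{X,\gamma}C^{p}$ central for $(B_{\alpha},C^{k-\beta},\mathbb{X}_{0},\mathbb{Y}_{0},\mathbf{c})$ with respect to the canonical embedding $\imath\colon B_{\alpha}\cap_{X,\gamma}C^{k-\beta}\hookrightarrow Z$ coming from $C^{k-\beta}\hookrightarrow C^{p}$. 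The functor sends $(B,\mathbb{X})$ to $(B,\mathbb{X}')$, where $\mathbb{X}'$ has the base presheaf $Y$ of $\mathbb{Y}_{0}$ in its first slot and $\mathbb{X}_{*},\mathbb{X}_{+}$ unchanged; one has to check that $B$ is then a $C_{n,\beta}^{l,\alpha}$-presheaf (resp.\ a $C_{n,\beta'}^{k,\alpha}$-presheaf) in $\mathbb{X}'$, that is, that $B_{\alpha}$ is compatible with the coarser comparison presheaf ($C^{l-\beta}$, resp.\ $C^{k-\beta'}$). Nontriviality of the intersection spaces is preserved since the new comparison presheaf receives $C^{k-\beta}$ by a componentwise inclusion of nuclear Fr\'echet spaces, so the intersections only grow (and when the ambient is vectorial, Proposition \ref{prop_vectorial_structure} forces nontriviality regardless); strongness is inherited because a closed subspace of a nuclear Fr\'echet space is again nuclear Fr\'echet. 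Condition (2) of the definition --- the subspace $A(U)\subset C^{k}(U)$ and the maps $\star_{U,i}$ filling in (\ref{diagram_star}) --- is recovered by conjugating the $\star$-diagram of $B$ with the connection $\xi$ and the lax comparison $\phi$ of $\mathbf{c}$, which is exactly what centrality of $Z$ encodes, the mediating morphism $\eta$ supplied by the centrality hypothesis providing the factorization through $Y$. The same $A$ works, so niceness passes over verbatim. Case (3) is identical, lowering $\beta$ to $\beta'$ in place of lowering $k$; if it is to be stated with a change of dimension as well, precompose with case (2).

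\emph{Case (2).} For $f\colon\mathbb{R}^{n}\to\mathbb{R}^{r}$ continuous and injective, the functor on objects is the direct image $(f_{*}B)(V)=B(f^{-1}(V))$, applied likewise to $X$, to the whole ISP $\mathbb{X}$, and to the comparison datum; on morphisms it is $f_{*}$. As $f_{*}$ is precomposition with $(f^{-1})^{\mathrm{op}}\colon\operatorname{Open}(\mathbb{R}^{r})^{\mathrm{op}}\to\operatorname{Open}(\mathbb{R}^{n})^{\mathrm{op}}$, it is exact, hence preserves the objectwise data defining all our structures --- monomorphisms, projective tensor products and the pullbacks that compute the intersection presheaves --- so $f_{*}$ of a proper full ISP is one again; the one extra point is to match the pushed-forward comparison presheaf with $C^{k-\beta}$ on $\mathbb{R}^{r}$ inside the new base, which uses that $f$, being a continuous injection, is a topological embedding, so that every open of $\mathbb{R}^{n}$ has the form $f^{-1}(V)$. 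This same fact, with naturality, shows that any transformation $f_{*}B\Rightarrow f_{*}B'$ restricts to a unique $B\Rightarrow B'$, giving full faithfulness of $f_{*}$ on presheaves and hence on $\mathbf{C}_{n,\beta}^{k,\alpha}(\mathbf{X})$; injectivity on objects is clear.

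\emph{Main obstacle.} I expect the crux to be the well-definedness in cases (1)--(3): precisely, checking that the abstract ``central object $+$ connection $+$ closure'' data of a $p$-structure really do reconstruct diagram (\ref{diagram_star}) over the new base object --- so that closedness under the relevant smooth functions survives the change of comparison presheaf --- and, in case (2), that the direct-image bookkeeping yields an honest $C_{r,\beta}^{k,\alpha}$-presheaf rather than merely a presheaf compatible with the pushed-forward copy of $C^{k-\beta}$. Once that is settled, faithfulness, fullness, injectivity on objects, and the assembly into Theorem A via the fibration $\pi_{n}$ are all formal.
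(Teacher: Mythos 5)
Your overall strategy coincides with the paper's for item (2) (direct image along $f$, full faithfulness on presheaf categories, injectivity on objects) and for the nontriviality step of item (1) (the inclusion $C^{k-\beta}\hookrightarrow C^{p}$ together with universality and stability of pullbacks under monomorphisms makes the intersections grow). But there are two genuine problems. First, your functor for cases (1) and (3) sends $(B,\mathbb{X})$ to $(B,\mathbb{X}')$ with the base presheaf $X$ replaced by the base $Y$ of $\mathbb{Y}_{0}$. This is not what is needed and does not obviously work: a $p$-structure only requires $\mathbb{Y}_{0}$ to be a \emph{non-necessarily proper} monoidal ISP, so properness and nontriviality of the intersection over $Y$ are not available, and the centrality datum concerns $Z=B_{\alpha}\cap_{X,\gamma}C^{p}$, an intersection taken over $X$, not over $Y$. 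The paper's embedding keeps $\mathbb{X}$ fixed: the whole content of item (1) is that a pair $(B,\mathbb{X})$ carrying a $p$-structure is \emph{already} an object of the target category, after which fullness and injectivity on objects are automatic because one is comparing full subcategories with the same morphisms.

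Second, the step you defer as ``the crux'' is in fact the entire proof, and your sketch of it omits the one hypothesis that makes it close. To produce $\star_{p}$ one assembles, from the mediating morphism $\eta$, the connection, the lax comparison $\phi$ and the extension property $\hat{\mu}$ of the closure structure, a composite $f_{\beta}^{p}:\gamma(A\otimes B_{\alpha}\cap_{X}C^{p})\rightarrow\gamma(B_{\alpha}\cap_{X}C^{k-\beta})$, and then post-composes with $\gamma(\imath_{b,p})$ to land in $\gamma(B_{\alpha}\cap_{X}C^{p})$. This composite lives in $\operatorname{Psh}(\mathbb{R}^{n};\mathbf{X})$, not in $\mathbf{B}_{n}$; it is only the fullness of $\gamma$ (the reason the source of item (1) is $\mathbf{C}_{n,\beta;p}^{k,\alpha}(\mathbf{X}_{\operatorname{full}})$ rather than $\mathbf{C}_{n,\beta;p}^{k,\alpha}(\mathbf{X})$) that lets one descend it to the required morphism $\star_{p}$ making (\ref{diagram_star}) commute. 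Your proposal never invokes this hypothesis, so the arrow you construct would remain stranded in the ambient category. A minor further slip: a continuous injection $\mathbb{R}^{n}\rightarrow\mathbb{R}^{r}$ need not be a topological embedding (a figure-six curve $\mathbb{R}\rightarrow\mathbb{R}^{2}$ is a counterexample), so your justification of full faithfulness of $f_{*}$ via ``every open of $\mathbb{R}^{n}$ has the form $f^{-1}(V)$'' is not valid as stated; the paper instead invokes directly the fact that the direct image functor along an injective continuous map is an embedding of presheaf categories.
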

\begin{enumerate}
\item[(1)] \emph{$\imath:\mathbf{C}_{n,\beta;p}^{k,\alpha}(\mathbf{X}_{\operatorname{full}})\hookrightarrow\mathbf{C}_{n,p}^{k,\alpha}(\mathbf{X})$,
if $p-\beta\leq k$;}
\item[(2)] \emph{$f_{*}:\mathbf{C}_{n,\beta}^{k,\alpha}(\mathbf{X})\hookrightarrow\mathbf{C}_{r,\beta}^{k,\alpha}(\mathbf{X})$,
for any continuous injective map $f:\mathbb{R}^{n}\rightarrow\mathbb{R}^{r}$.}
\end{enumerate}
\begin{proof}
We begin by proving (1). Notice that the fiber $\pi_{n}^{-1}(\mathbf{X})$
is a nonempty category only if $\mathbf{X}$ for every $k,\alpha,\beta$
is part of a partially monoidal proper full ISP, with $\gamma$ full,
in which at least one $C_{n,\beta}^{k,\alpha}$-presheaf is defined.
Since for empty fibers the result is obvious, we assume the above
condition. Thus, given $(B,\mathbb{X})\in\mathbf{C}_{n,\beta;p}^{k,\alpha}(\mathbf{X}_{\operatorname{full}})$
(which exists by the assumption on $\mathbf{X}$), we will show that
it actually belongs to $\mathbf{C}_{n,p}^{k,\alpha}(\mathbf{X})$.
Since we are working with full subcategories this will be enough for
(1). We assert that $B_{\alpha}$ and $C^{p}$ have nontrivial intersection
in $\mathbb{X}$. Since the functor $\gamma$ creates null-objects,
it is enough to prove that $B_{\alpha}\cap_{X,\gamma}C^{p}$, $(B_{\alpha})_{\epsilon}\cap_{X_{*},\gamma_{\Sigma}}(C^{p})_{\epsilon'}$
and $(B_{\alpha})_{\delta}\cap_{X_{*},\gamma_{\Sigma}}(C^{p})_{\delta'}$.
But, since $\delta'=\epsilon'=\min$ and since $p\leq\beta-k$, from
universality and stability of pullbacks under monomorphisms we get
monomorphisms $B_{\alpha}\cap_{X,\gamma}C^{k-\beta}\hookrightarrow B_{\alpha}\cap_{X,\gamma}C^{p}$,
etc. Now, being $B$ a $C_{n,\beta}^{k,\alpha}$-presheaf in $\mathbb{X},$the
left-hand sides $B_{\alpha}\cap_{X,\gamma}C^{k-\beta}$, etc., are
nontrivial, which implies $B_{\alpha}$ and $C^{p}$ have nontrivial
intersection in $\mathbb{X}$. By the same arguments we get the commutative
diagram below, where $A$ is the presheaf arising from the $C_{n,\beta}^{k,\alpha}$-structure
of $B$. Our task is to extend $\star_{\beta}$ by replacing $k-\beta$
with $p$. If $p(i)-\beta(i)=k$ there is nothing to do for such $i$.
Thus, suppose $p(i)-\beta(i)<k$ for all $i$. More precisely, our
task is to get the dotted arrow in the second diagram below, where
the long vertical arrows arise from the universality of pullbacks,
as above. We use simple arrows intead of double arrows in order to
simplify the notation.

$
\xymatrix{ C^{\infty}\otimes C^p \ar[r]^-{\cdot} & C^p  \\
\ar@{^(->}[u] C^{\infty}\otimes C^{k-\beta} \ar[r]^-{\cdot} & C^{k- \beta} \ar@{^(->}[u] \\
\ar@{^(->}[u] A \otimes (B_\alpha \cap_X C^{k- \beta}) \ar[r]_-{\star_{\beta}} & B_\alpha \cap_{X}  C^{k- \beta} \ar@{^(->}[u]}
$$
\xymatrix{ \ar@{^(->}[d] A\otimes (B_{\alpha} \cap_X C^p) \ar@{-->}[r]^{\star _p} & \ar@{^(->}[d] B_{\alpha} \cap_X C^p  \\ 
 C^{\infty}\otimes C^p \ar[r]^-{\cdot} & C^p  \\
\ar@{^(->}[u] C^{\infty}\otimes C^{k-\beta} \ar[r]^-{\cdot} & C^{k- \beta} \ar@{^(->}[u] \\
 \ar@/^{1.2cm}/[uuu] \ar@{^(->}[u] A \otimes (B_\alpha \cap_X C^{k- \beta}) \ar[r]_-{\star_{\beta}} & B_\alpha \cap_{X}  C^{k- \beta} \ar@{^(->}[u] \ar@/_{1.2cm}/[uuu]}
$

\noindent Since $(B,\mathbb{X})\in\mathbf{C}_{n,\beta;p}^{k,\alpha}(\mathbf{X}_{\operatorname{full}})$,
there exists a ISP $\mathbb{Y}_{0}$, a connection $\xi:Y\Rightarrow X$
and a closure structure $\mathbf{c}=(\operatorname{cl},\mu,\phi)$
such that $B_{\alpha}\cap_{X,\gamma}C^{p}$ is central for $(B_{\alpha},C^{k-\beta},\mathbb{X}_{0},\mathbb{Y}_{0},\mathbf{c})$.
Since any morphism extends to the closure and recalling that $\gamma$
is strong monoidal, we have the diagram below. It commutes due to
the commutativity of (\ref{closure_1}) and (\ref{closure_2}) and
due to the naturality of $\psi$. We are also using that $\gamma\circ B\cap_{X}B'\simeq B\cap_{X,\gamma}B'$.
Furthermore, $\mu_{A}$ is $\mu_{\gamma(\imath_{A})}$, where $\imath_{A}:A\hookrightarrow C^{\infty}$
is the embedding. Again we use simple arrows instead of double arrows.$$
\xymatrix{\ar[d]_{\phi} \operatorname{cl}_{\gamma (C^{\infty})}(\gamma (A))\circledast \operatorname{cl}_{X}(B_\alpha \cap _{X,\gamma} C^{k- \beta}) & \ar[l]_{id \circledast (\operatorname{cl}\circ \overline{\xi})} \operatorname{cl}_{\gamma(C^{\infty})}(\gamma (A))\circledast \operatorname{cl}_{Y}(B_{\alpha} \cap _{Y,\gamma} C^{k-\beta} ) \\
 \ar[r]^-{\hat{\mu}_{\gamma (\star _{\beta})\circ \psi}} \operatorname{cl}_{\gamma (C^{\infty}) \circledast X} (\gamma (A) \circledast (B_{\alpha} \cap _{X,\gamma} C^{k- \beta})) & B_\alpha \cap_{X,\gamma}  C^{k- 
\beta} \\
\ar[d]<0.1cm>^{\psi ^{-1}} \ar[ru]^{\gamma (\star _\beta )} \gamma (A\otimes (B_{\alpha} \cap _{X} C^{k-\beta})) \ar[r]_-{\gamma (id \otimes \imath) } & \gamma (A\otimes (B_{\alpha} \cap _{X} C^p)) \ar[d]<0.1cm>^{\psi ^{-1}}   \\
\ar@/^{2cm}/[uu]^{\mu_{A\circledast X}}  \ar[u]<0.1cm>^{\psi} \gamma (A) \circledast (B_{\alpha} \cap _{X,\gamma} C^{k-\beta}) \ar[r]_-{\gamma (id) \circledast \gamma (\imath) } & \ar[u]<0.1cm>^{\psi} \gamma (A)\circledast (B_{\alpha} \cap _{X,\gamma} C^p)  \ar@/_{2cm}/[uuu]_{\mu_{A} \circledast \eta}  } 
$$Let us consider the composition morphism 
\[
f_{\beta}^{p}=\hat{\mu}_{\gamma(\star_{\beta})\circ\psi}\circ\phi\circ(id\circledast(\operatorname{cl}\circ\overline{\xi}))\circ(\mu_{A}\circledast\eta)\circ\psi^{-1}:\gamma(A\otimes B_{\alpha}\cap_{X}C^{p})\rightarrow\gamma(B_{\alpha}\cap_{X}C^{k-\beta}).
\]
On the other hand, we have $\gamma(\imath_{b,p}):\gamma(B_{\alpha}\cap_{X}C^{k-\beta})\hookrightarrow\gamma(B_{\alpha}\cap_{X}C^{p})$
arising from the embedding $C^{k-\beta}\hookrightarrow C^{p}$. Composing
them we get a morphism
\begin{equation}
\gamma(\imath_{b,p})\circ f_{\beta}^{p}:\gamma(A\otimes B_{\alpha}\cap_{X}C^{p})\rightarrow\gamma(B_{\alpha}\cap_{X}C^{p}).\label{gamma_is_full}
\end{equation}
Since $\gamma$ is fully-faithful, there exists exactly one $\star_{p}:A\otimes B_{\alpha}\cap_{X}C^{p}\rightarrow B_{\alpha}\cap_{X}C^{p}$
such that $\gamma(\star_{p})=\gamma(\imath_{b,p})\circ f_{\beta}^{p}$,
which is our desired map. That it really extends $\star_{\beta}$
follows from the commutativity of all diagrams involved in the definition
of $\star_{p}$. For (2), recall that any continuous map $f:\mathbb{R}^{n}\rightarrow\mathbb{R}^{r}$
induces a pushforward functor $f^{-1}:\mathbf{Psh}(\mathbb{R}^{n})\rightarrow\mathbf{Psh}(\mathbb{R}^{r})$
between the corresponding presheaf categories of presheaves of sets,
which becomes an embedding when $f$ is injective \citep{sheaves}.
Since $(f^{-1}F)(U)=F(f^{-1}(U))$, it is straighforward to verify
that if $(B,\mathbb{X},A)$ belongs to $\mathbf{C}_{n,\beta}^{k,\alpha}(\mathbf{X})$,
then $(f^{-1}B,f^{-1}A,f^{-1}\mathbb{X})\in\mathbf{C}_{r,\beta}^{k,\alpha}(\mathbf{X})$,
where $f^{-1}\mathbb{X}$ is defined componentwise. Thus, we have
an injective on objects functor $f^{-1}:\mathbf{C}_{n,\beta}^{k,\alpha}(\mathbf{X})\rightarrow\mathbf{C}_{r,\beta}^{k,\alpha}(\mathbf{X})$.
Because we are working with full subcategories, it follows that $f^{-1}$
is full and therefore a full embedding.
\end{proof}
\begin{cor}
\label{corollary_embedding}Let $\mathbf{C}_{n,\beta;l}^{k,\alpha}(\mathbf{X}_{\operatorname{full}})$
and $\mathbf{C}_{n,\beta;\beta'}^{k,\alpha}(\mathbf{X}_{\operatorname{full}})$
be the category $\mathbf{C}_{n,\beta;p}^{k,\alpha}(\mathbf{X}_{\operatorname{full}})$
for $p_{l}(i)=l-\beta(i)$ and $p_{\beta'}(i)=k-\beta'(i)$, respectively.
Then there are full embeddings
\end{cor}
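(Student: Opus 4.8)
The plan is to read off both embeddings from Theorem~\ref{prop_embeddings}~(1), the two cases differing only in the choice of the exponent function $p$ and in one mild bookkeeping remark about the target category.

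For the first embedding I take $p=p_{l}$, $p_{l}(i)=l-\beta(i)$. The condition under which a $p_{l}$-structure can be imposed at all, $p_{l}\le k-\beta$, holds exactly when $l\le k$, and this is also the hypothesis needed by Theorem~\ref{prop_embeddings}~(1) in this case. Granting $l\le k$, that theorem embeds $\mathbf{C}_{n,\beta;p_{l}}^{k,\alpha}(\mathbf{X}_{\operatorname{full}})=\mathbf{C}_{n,\beta;l}^{k,\alpha}(\mathbf{X}_{\operatorname{full}})$ fully into the category of those $(B,\mathbb{X})$ whose multiplication $\star$ closes against the presheaf $C^{p_{l}}=C^{l-\beta}$ by way of a multiplier presheaf $A$. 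Now $C^{l-\beta}$ is exactly the structural presheaf defining $\mathbf{C}_{n,\beta}^{l,\alpha}(\mathbf{X})$, and the multiplier index drops from $k$ to $l$ at no cost: since $l\le k$ we have $A\subseteq C^{k}\subseteq C^{l}$, the diagram~\eqref{diagram_star} is literally unchanged (its bottom row is the pointwise product of $A$-functions restricted to $B_{\alpha}\cap_{X}C^{l-\beta}$), and the remaining data — the intersection presheaves $B_{\alpha}\cap_{X}C^{l-\beta}$ and the maps $\star$ — see $k$ only through the difference $k-\beta$ and through the inclusion $A\hookrightarrow C^{k}$. Hence the image lies in $\mathbf{C}_{n,\beta}^{l,\alpha}(\mathbf{X})$, giving the full embedding $\imath$.

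For the second embedding I instead take $p=p_{\beta'}$, $p_{\beta'}(i)=k-\beta'(i)$. The solvability condition $p_{\beta'}\le k-\beta$ now reads $\beta\le\beta'$, and under this hypothesis Theorem~\ref{prop_embeddings}~(1) embeds $\mathbf{C}_{n,\beta;p_{\beta'}}^{k,\alpha}(\mathbf{X}_{\operatorname{full}})=\mathbf{C}_{n,\beta;\beta'}^{k,\alpha}(\mathbf{X}_{\operatorname{full}})$ fully into the category of presheaves whose multiplication closes against $C^{p_{\beta'}}=C^{k-\beta'}$, which is precisely $\mathbf{C}_{n,\beta'}^{k,\alpha}(\mathbf{X})$; here no reindexing is needed, as the multiplier index $k$ is untouched. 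This yields $\jmath:\mathbf{C}_{n,\beta;\beta'}^{k,\alpha}(\mathbf{X}_{\operatorname{full}})\hookrightarrow\mathbf{C}_{n,\beta'}^{k,\alpha}(\mathbf{X})$.

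Fullness is automatic in both cases, since morphisms in all of these categories are simply morphisms of the underlying presheaves of distributive $\Gamma_{\geq0}$-spaces, so a functor that is injective on objects and defined between full subcategories is automatically a full embedding. The only place I expect to need genuine (though routine) checking is the identification in the first part: one must verify that an object which is a $C_{n,\beta;l}^{k,\alpha}$-presheaf, equipped with the pushed-forward multiplication produced by Theorem~\ref{prop_embeddings}~(1), satisfies the full list of axioms of a $C_{n,\beta}^{l,\alpha}$-presheaf from Section~\ref{sec_C_k_a_b}; as indicated above this reduces to the inclusion $C^{k}\subseteq C^{l}$ together with the observation that those axioms depend on $k$ only via $k-\beta$ and $A\hookrightarrow C^{k}$.
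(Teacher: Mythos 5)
Your route is the same as the paper's: both embeddings are obtained by specializing the exponent function in Theorem \ref{prop_embeddings}(1) to $p_{l}$ and $p_{\beta'}$ and identifying the resulting codomains via $C^{p_{l}}=C^{l-\beta}$ and $C^{p_{\beta'}}=C^{k-\beta'}$; the paper's proof is exactly this, compressed into one line. Your treatment of the first embedding --- including the check that $A\subset C^{k}\subset C^{l}$ when $l\leq k$, so that the data of a $C_{n,p_{l}}^{k,\alpha}$-presheaf really is the data of a $C_{n,\beta}^{l,\alpha}$-presheaf --- is correct and usefully fills in what the paper leaves implicit.

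The second embedding is where your proof and the stated corollary part ways, and you do not flag it. You test the condition $p\leq k-\beta$, which is the one actually used in the proof of Theorem \ref{prop_embeddings}(1) (it is what produces $C^{k-\beta}\hookrightarrow C^{p}$ and hence the monomorphism $B_{\alpha}\cap_{X}C^{k-\beta}\hookrightarrow B_{\alpha}\cap_{X}C^{p}$), and for $p_{\beta'}=k-\beta'$ this reads $\beta\leq\beta'$. But the corollary's hypothesis is $\beta'\leq\beta$, the opposite inequality, and its displayed codomain is $\mathbf{C}_{n,\beta}^{k,\alpha}(\mathbf{X}_{\operatorname{full}})$ rather than the $\mathbf{C}_{n,\beta'}^{k,\alpha}(\mathbf{X})$ you land in; writing ``under this hypothesis'' silently substitutes your inequality for the stated one. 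For comparison, the paper's own proof checks the literal condition ``$p-\beta\leq k$'' from the statement of Theorem \ref{prop_embeddings}(1), which for $p_{\beta'}$ becomes $k-\beta'-\beta\leq k$ and holds vacuously, so the hypothesis $\beta'\leq\beta$ does no work there either; together with the fact that Theorem A(3) records the codomain as $\mathbf{C}_{\cdot,\beta'}^{k,\alpha}$, this strongly suggests the corollary's inequality and codomain are misprinted and that your version is the one consistent with the mechanics of Theorem \ref{prop_embeddings}(1). Still, as a proof of the corollary as written your argument is incomplete: under $\beta'\leq\beta$ the inclusion between $C^{k-\beta}$ and $C^{k-\beta'}$ points the wrong way and your construction gives nothing. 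You should state explicitly that you are proving the embedding under $\beta\leq\beta'$ with target $\mathbf{C}_{n,\beta'}^{k,\alpha}$, and note the discrepancy with the statement.
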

\begin{itemize}
\item $\jmath:\mathbf{C}_{n,\beta;\beta'}^{k,\alpha}(\mathbf{X}_{\operatorname{full}})\hookrightarrow\mathbf{C}_{n,\beta}^{k,\alpha}(\mathbf{X}_{\operatorname{full}})$,
if $\beta'\leq\beta$;
\item $\imath:\mathbf{C}_{n,\beta;l}^{k,\alpha}(\mathbf{X}_{\operatorname{full}})\hookrightarrow\mathbf{C}_{n,\beta}^{l,\alpha}(\mathbf{X}_{\operatorname{full}})$,
if $l\leq k$.
\end{itemize}
\begin{proof}
Just notice that if $\beta'\leq\beta$ implies $p_{\beta'}-\beta\leq k$
and that $l\le k$ implies $p_{l}-\beta\leq k$, and then uses (1)
of Theorem \ref{prop_embeddings}.
\end{proof}
\noindent \textbf{Theorem A.} \emph{There are full embeddings}
\begin{enumerate}
\item[(1)] \emph{$\imath:\mathbf{C}_{n,\beta;l}^{k,\alpha}\hookrightarrow\mathbf{C}_{n,\beta}^{l,\alpha}$,
if $l\leq k$;}
\item[(2)] \emph{$f_{*}:\mathbf{C}_{n,\beta}^{k,\alpha}\hookrightarrow\mathbf{C}_{r,\beta}^{k,\alpha}$,
for any continuous injective map $f:\mathbb{R}^{n}\rightarrow\mathbb{R}^{r}$;}
\item[(3)] \emph{$\jmath:\mathbf{C}_{n,\beta;\beta'}^{k,\alpha}\hookrightarrow\mathbf{C}_{r,\beta'}^{k,\alpha}$,
if $\beta'\leq\beta$.}$\underset{\underset{\;}{\;}}{\;}$
\end{enumerate}
\begin{proof}
Straighforward from Corollary \ref{corollary_embedding} and condition
(2) of Theorem \ref{prop_embeddings}.
\end{proof}
\begin{rem}
The requirement of $\gamma$ being full is a bit strong. Indeed, our
main examples of $\Gamma$-ambients are the vectorial ones. But requiring
a full embedding $\gamma:\mathbf{NFre}_{\Gamma}\rightarrow\mathbf{Vec}_{\mathbb{R},\Gamma}$
is a really strong condition. When looking at the proof of Theorem
\ref{prop_embeddings} we see that the only time when the full hypothesis
on $\gamma$ was needed is to ensure that the morphism (\ref{gamma_is_full})
in $\operatorname{Psh}(\mathbb{R}^{n};\mathbf{X})$ is induced by
a morphism in $\mathbf{B}_{n}$. Thus, the hypothesis of being full
can be clearly weakened. Actually, the hypothesis on $\gamma$ being
an embedding can also be weakened. In the end, the only hypothesis
needed in order to develop the previous results is that $\gamma$
creates null-objects and some class of monomorphisms. Since from now
on we will not focus on the theory of $C_{n,\beta}^{k,\alpha}$-presheaves
itself, we will not modify our hypothesis. On the other hand, in futures
works concerning the study of the categories $\mathbf{C}_{n,\beta}^{k,\alpha}$
this refinement on the hypothesis will be very welcome.
\end{rem}

\section{$B_{\alpha,\beta}^{k}$-Manifolds \label{sec_B_k,a,b_manifolds}}

$\quad\;\,$Let $\mathbb{X}$ be a proper full intersection structure
and let $B\in\mathbf{C}_{n,\beta}^{k,\alpha}$ be a $C_{n,\beta}^{k,\alpha}$-presheaf
in $\mathbb{X}$. A $C^{k}$-function
$f:U\rightarrow\mathbb{R}^{m}$ is called a \emph{$(B,k,\alpha,\beta)$-function
in $(\mathbb{X},m)$ }if $\partial^{\mu}f_{j}\in B_{\alpha(i)}(U)\cap_{X(U)}C^{k-\beta(i)}(U)$
for all $\vert\mu\vert=i$ and $i\in\Gamma_{\geq0}$. Due to the compatibility
between the operations of $B_{\alpha}$ and $C^{k-\beta}$ at the
intersection, it follows that the collection $B_{\alpha,\beta}^{k}(U;\mathbb{X},m)$
of all $(B,k,\alpha,\beta)$-functions in $(\mathbb{X},m)$ is a real
vector space. This will become more clear in the next proposition.
First, notice that by varying $U\subset\mathbb{R}^{n}$ we get a presheaf
(at least of sets) $B_{\alpha,\beta}^{k}(-;\mathbb{X},m)$. Recall
that a \emph{strong} $C_{n,\beta}^{k,\alpha}$-presheaf is one in
which $B_{\alpha}\cap_{X}C^{k-\beta}\hookrightarrow C^{k-\beta}$
is objectwise closed.
\begin{prop}
\label{prop_compatible_is_Frechet}For every $B\in\mathbf{C}_{n,\beta}^{k,\alpha}$,
every $\mathbb{X}$ and every $m\geq0$, the presheaf of $(B,k,\alpha,\beta)$-functions
in $(\mathbb{X},m)$ is a presheaf of real vector spaces. If $B$
is strong, then it is actually a presheaf of nuclear Fr\'echet spaces.
\end{prop}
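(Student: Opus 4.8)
The plan is to realise $B_{\alpha,\beta}^{k}(-;\mathbb{X},m)$ as a sub-presheaf of the presheaf $U\mapsto C^{k}(U;\mathbb{R}^{m})$, and then to endow each of its sections with an intrinsic topology under which it sits as a closed subspace of a countable product of nuclear Fr\'echet spaces. The preparatory step is to extract from the hypotheses the concrete meaning of ``$\partial^{\mu}f_{j}$ lies in the intersection space''. Since $(B,\mathbb{X})\in\mathbf{C}_{n,\beta}^{k,\alpha}$, the full ISP $\mathbb{X}$ is proper, so the intersection presheaf $W:=B_{\alpha}\cap_{X}C^{k-\beta}$ is represented by a presheaf of $\Gamma_{\geq0}$-spaces whose $i$-th component over $U$ is a nuclear Fr\'echet space $W_{i}(U)$, and it carries the morphism $u'\colon W\Rightarrow C^{k-\beta}$ (the map demanded to exist for proper spans). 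The component $u'_{i,U}\colon W_{i}(U)\to C^{k-\beta(i)}(U)$ is, up to the faithful functors $\gamma$ and $[\Gamma_{\geq0};\mathbf{NFre}]\hookrightarrow\mathbf{NFre}_{\Gamma_{\geq0}}$ (both of which reflect monomorphisms), a pullback of a monomorphism, hence an \emph{injective} continuous linear map. Moreover, by axiom (1) in the definition of a $C_{n,\beta}^{k,\alpha}$-presheaf, $u'_{i,U}$ carries the additive and scaling operations of $W_{i}(U)$ to the pointwise sum and pointwise scaling of functions; consequently $\operatorname{im}(u'_{i,U})$ is a linear subspace of $C^{k-\beta(i)}(U)$.

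Granting this, the first claim is short. A $C^{k}$-map $f\colon U\to\mathbb{R}^{m}$ belongs to $B_{\alpha,\beta}^{k}(U;\mathbb{X},m)$ precisely when $\partial^{\mu}f_{j}\in\operatorname{im}(u'_{i,U})$ for every $i\in\Gamma_{\geq0}$, every $|\mu|=i$ and every $1\leq j\leq m$. Since $\partial^{\mu}(\lambda f+g)_{j}=\lambda\,\partial^{\mu}f_{j}+\partial^{\mu}g_{j}$ and each $\operatorname{im}(u'_{i,U})$ is a linear subspace, $B_{\alpha,\beta}^{k}(U;\mathbb{X},m)$ is a linear subspace of $C^{k}(U;\mathbb{R}^{m})$. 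For $V\subseteq U$ open, $\partial^{\mu}$ commutes with restriction and $u'$ is natural, so $\partial^{\mu}f_{j}=u'_{i,U}(\xi)$ forces $\partial^{\mu}(f|_{V})_{j}=(\partial^{\mu}f_{j})|_{V}=u'_{i,V}(\xi|_{V})\in\operatorname{im}(u'_{i,V})$; hence restriction of $C^{k}$-maps restricts to a well-defined linear map $B_{\alpha,\beta}^{k}(U;\mathbb{X},m)\to B_{\alpha,\beta}^{k}(V;\mathbb{X},m)$, and we obtain a presheaf of real vector spaces.

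For the second claim I would first note that strongness says exactly that $\operatorname{im}(u'_{i,U})$ is closed in $C^{k-\beta(i)}(U)$, hence a nuclear Fr\'echet space, and that the open mapping theorem then makes $u'_{i,U}$ a topological isomorphism onto it; in particular each $\partial^{\mu}f_{j}$ has a \emph{unique} preimage $\xi_{\mu,j}(f)\in W_{i}(U)$. The $C^{k}$-subspace topology is too coarse, because passing from $f$ to a derivative lying in a strictly finer function space is not continuous for it; so I would topologise $B_{\alpha,\beta}^{k}(U;\mathbb{X},m)$ as the subspace induced by the injective linear map
\[
\iota_{U}\colon B_{\alpha,\beta}^{k}(U;\mathbb{X},m)\longrightarrow C^{k}(U;\mathbb{R}^{m})\times\prod_{i\in\Gamma_{\geq0},\,|\mu|=i,\,1\le j\le m}W_{i}(U),\qquad f\longmapsto\bigl(f,\,(\xi_{\mu,j}(f))_{\mu,j}\bigr).
\]
For the admissible index sets $\Gamma$ the target is a countable product of nuclear Fr\'echet spaces, hence nuclear Fr\'echet, and $\operatorname{im}(\iota_{U})$ is the kernel of the continuous linear map $\bigl(f,(\xi_{\mu,j})\bigr)\mapsto\bigl(\partial^{\mu}f_{j}-u'_{i,U}(\xi_{\mu,j})\bigr)_{i,\mu,j}$, where each equation is read in whichever of $C^{k-|\mu|}(U)$, $C^{k-\beta(i)}(U)$ contains the other, using that $\partial^{\mu}$, the corresponding comparison inclusion and $u'_{i,U}$ are all continuous. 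Being a closed subspace of a nuclear Fr\'echet space, $B_{\alpha,\beta}^{k}(U;\mathbb{X},m)$ is itself nuclear Fr\'echet, and the restriction maps are continuous because they act coordinatewise (restriction on $C^{k}$ and on each $W_{i}$).

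I expect the topology, not the algebra, to be the crux: one must verify that this intrinsic topology is complete, and closedness of $\operatorname{im}(u'_{i,U})$ together with the open mapping theorem is exactly what upgrades $\iota_{U}$ from a continuous injection to a closed embedding --- which is why strongness is needed only for the second half. The other point requiring care is the preliminary claim that the $u'_{i,U}$ are injective morphisms of nuclear Fr\'echet spaces (so that the intersection spaces are genuinely spaces of functions and the defining conditions are even well-posed); this rests on properness of $\mathbb{X}$, faithfulness of the embedding $\gamma$, and stability of monomorphisms under pullback.
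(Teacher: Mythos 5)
Your proof is correct and rests on the same mechanism as the paper's: encode a $(B,k,\alpha,\beta)$-function by the family of its partial derivatives, land in a countable product of nuclear Fr\'echet spaces, and identify the result as a closed subspace. The paper does this by taking the jet map $j^{k}f=(\partial^{\mu}f_{j})_{\mu,j}$ from $C^{k}(U;\mathbb{R}^{m})$ into $\prod C^{k-\beta(i)}(U)$ and realizing $B_{\alpha,\beta}^{k}(U;\mathbb{X},m)$ as the preimage of $\prod B_{\alpha(i)}(U)\cap_{X(U)}C^{k-\beta(i)}(U)$, equipped with the subspace topology of $C^{k}(U;\mathbb{R}^{m})$; strongness makes that product closed, hence the preimage closed. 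You instead use the graph-type embedding $\iota_{U}\colon f\mapsto(f,(\xi_{\mu,j}(f)))$ into $C^{k}(U;\mathbb{R}^{m})\times\prod W_{i}(U)$ and exhibit its image as the kernel of a continuous linear map. Two remarks on the comparison. First, with your choice of topology the image of $\iota_{U}$ is closed simply because it is a kernel, so your argument actually produces a nuclear Fr\'echet structure with no use of strongness at all; your closing sentence misattributes the role of that hypothesis. What strongness (via the open mapping theorem, as you observe) really buys is that $u'_{i,U}$ is a topological isomorphism onto a closed subspace of $C^{k-\beta(i)}(U)$, so that your graph topology coincides with the paper's $C^{k}$-subspace topology --- without it the two constructions can genuinely differ, as the paper's $L^{p}$ example after the proposition illustrates. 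Second, you are more careful than the paper on two points it leaves implicit: that ``lying in the intersection space'' means lying in the image of the injective comparison map $u'$, and that the restriction maps are well defined and continuous, which is what makes the statement about a \emph{presheaf} (rather than a single space) go through.
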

\begin{proof}
Consider the following spaces:
\[
\mathbb{C}_{\alpha,\beta}^{k}(U,m)=\prod_{\kappa(m)}C^{k-\beta(i)}(U)\quad\text{and}\quad\mathbb{BC}_{\alpha,\beta}^{k}(U;\mathbb{X},m)=\prod_{\kappa(m)}B_{\alpha(i)}(U)\cap_{X(U)}C^{k-\beta(i)}(U),
\]
where $\prod_{\kappa(m)}=\prod_{j=1}^{m}\prod_{i}\prod_{\vert\alpha\vert=i}\prod_{m^{i}}$.
Since they are countable products of nuclear Fr\'echet spaces, they
have a natural nuclear Fr\'echet structure. Consider the map $j^{k}:C^{k}(U;\mathbb{R}^{m})\rightarrow\mathbb{C}^{k,\alpha}(U,m)$,
given by $j^{k}f=(\partial^{\mu}f_{j})_{\mu,j}$, with $\vert\mu\vert=i$,
and notice that $B_{\alpha,\beta}^{k}(U;\mathbb{X},m)$ is the preimage
of $j^{k}$ by $\mathbb{BC}_{\alpha,\beta}^{k}(U;\mathbb{X},m)$.
The map $j^{k}$ is linear, so that any preimage has a linear structure,
implying that the space of $(B,k,\alpha,\beta)$-functions is linear.
But $\jmath^{k}$ is also continuous in those topologies, so that
if $\mathbb{BC}_{\alpha,\beta}^{k}(U;\mathbb{X},m)$ is a closed subset
in $\mathbb{C}_{\alpha,\beta}^{k}(U,m)$, then $B_{\alpha,\beta}^{k}(U;\mathbb{X},m)$
is a closed subset of a nuclear Fr\'echet spaces and therefore it
is also nuclear Fr\'echet. This is ensured precisely by the strong
hypothesis on $B$.
\end{proof}
\begin{example}
Even if $B$ is not strong, the space of $(B,k,\alpha,\beta)$-functions
may have a good structure. Indeed, let $p\in[1,\infty]$ be fixed,
let $\alpha(i)=p$, $\beta(i)=i$ and let $L$ be the presheaf $L(U)_{i}=L^{i}(U)$,
regarded as a $C_{n,\beta}^{k,\alpha}$-presheaf in the standard ISP.
Thus, $L_{\alpha}(U)\cap_{X(U)}C^{k-\beta}(U)$ is the $\Gamma_{\geq0}$-space
of components $L^{p}(U)\cap C^{k-i}(U)$. A $(B,k,\alpha,\beta)$-function
is then a $C^{k}$-map such that $\partial^{\mu}f_{j}\in L^{p}(U)\cap C^{k-i}(U)$
for every $\vert\mu\vert=i$. Therefore, the space of all of them
is the strong (in the sense of classical derivatives) Sobolev space
$W^{k,p}(U;\mathbb{R}^{m})$, which is a Banach space \citep{key-1}.
But $L^{p}(U)\cap C^{k-i}(U)$ is clearly not closed in $C^{k-i}(U)$,
since sequences of $L^{p}$-integrable $C^{k}$-functions do not necessarily
converge to $L^{p}$-integrable maps \citep{key-1}.
\end{example}
\begin{rem}
In order to simplify the notation, if $m=n$ the space of $(B,k,\alpha,\beta)$-functions
will be denoted by $B_{\alpha,\beta}^{k}(U;\mathbb{X})$ instead of
$B_{\alpha,\beta}^{k}(U;\mathbb{X},n)$.
\end{rem}
Let $M$ be a Hausdorff paracompact topological space. A $n$\emph{-dimensional
$C^{k}$-structure} in $M$ is a $C^{k}$-atlas in the classical sense,
i.e, a family $\mathcal{A}$ of coordinate systems $\varphi_{i}:U_{i}\rightarrow\mathbb{R}^{n}$
whose domains cover $M$ and whose transition functions $\varphi_{j}\circ\varphi_{i}^{-1}:\varphi(U_{ij})\rightarrow\mathbb{R}^{n}$
are $C^{k}$, where $U_{ij}=U_{i}\cap U_{j}$. A \emph{$n$-dimensional
$C^{k}$-manifold} is a pair $(M,\mathcal{A})$, where $\mathcal{A}$
is a maximal $C^{k}$-structure. Given a $C_{n,\beta}^{k,\alpha}$-presheaf
$B$ in a proper full ISP $\mathbb{X}$, define a $(B_{\alpha,\beta}^{k},\mathbb{X})$-\emph{structure
}on a $C^{k}$-manifold $(M,\mathcal{A})$ as a subatlas $\mathcal{B}_{\alpha,\beta}^{k}(\mathbb{X})\subset\mathcal{A}$
such that $\varphi_{j}\circ\varphi_{i}^{-1}\in B_{\alpha,\beta}^{k,n}(\varphi_{i}(U_{ij});\mathbb{X}(\varphi_{i}(U_{ij})))$.
A \emph{$(B_{\alpha,\beta}^{k},\mathbb{X})$-manifold }is one in which
a $(B_{\alpha,\beta}^{k},\mathbb{X})$-structure has been fixed. A
\emph{$(B_{\alpha,\beta}^{k},\mathbb{X})$-morphism} between two $C^{k}$-manifolds
is a $C^{k}$-function $f:M\rightarrow M'$ such that $\phi f\varphi^{-1}\in B_{\alpha,\beta}^{k,n}(\varphi(U);\mathbb{X}(\varphi(U)))$,
for every $\varphi\in\mathcal{B}_{\alpha,\beta}^{k}(\mathbb{X})$
and $\phi\in\mathcal{B}_{\alpha,\beta}'^{k}(\mathbb{X})$. The following
can be easily verified:
\begin{enumerate}
\item A $C^{k}$-manifold $(M,\mathcal{A})$ admits a $(B_{\alpha,\beta}^{k},\mathbb{X})$-structure
iff there exists a subatlas $\mathcal{B}_{\alpha,\beta}^{k}(\mathbb{X})$
for which the identity $id:M\rightarrow M$ is a $(B,k,\alpha,\beta)$-morphism
in $\mathbb{X}$.
\item If $M_{j}$ is a $(B_{\alpha_{j},\beta_{j}}^{k},\mathbb{X})$-manifold,
with $j=1,2$, then there exists a $(B_{\alpha,\beta}^{k},\mathbb{X})$-morphism
between them only if $\beta(i)\geq\max_{j}\{b_{j}(i)\}$ for every
$i$. In particular, if $\beta_{j}(i)=i$, then such a morphism exists
only if $i\leq\beta(i)$.
\end{enumerate}
\begin{example}
In the standard vectorial ISP $\mathbb{X}$ the Example \ref{example_1_int}
and Example \ref{example_2_int} contains the basic examples of $(B_{\alpha,\beta}^{k},\mathbb{X})$-manifolds.
\end{example}
$\quad\;\,$We would like to consider the category $\mathbf{Diff}_{\alpha,\beta}^{B,k}(\mathbb{X})$
of $(B_{\alpha,\beta}^{k},\mathbb{X})$-manifolds with $(B_{\alpha,\beta}^{k},\mathbb{X})$-morphisms
between them. The next lemma reveals, however, that the composition
of those morphisms is not well-defined.
\begin{lem}
Let $B$ be a $C_{n,\beta}^{k,\alpha}$-presheaf in a proper full
ISP $\mathbb{X}$, let $\Pi(i)$ be the set of partitions of $[i]=\{1,..,i\}$.
For each $\mu[i]\in\Pi(i)$, write $\mu[i;r]$ to denote its blocks,
i.e, $\mu[i]=\sqcup_{r}\mu[i;r]$. Let $\leq$ be a function assigning
to each $i\in\Gamma_{\geq0}$ orderings $\leq_{i}$ in $\Pi(i)$ and
$\leq_{\mu[i]}$ in the set of blocks of $\mu[i]$, as follows:
\begin{eqnarray*}
\mu[i]_{\min} & \leq_{i} & \mu[i]_{\min+1}\leq_{i}\cdots\leq_{i}\mu[i]_{\max-1}\leq_{i}\mu[i]_{\max}\\
\mu[i;\min] & \leq_{\mu[i]} & \mu[i;\min+1]\leq_{\mu[i]}\cdots\leq_{\mu[i]}\mu[i;\max-1]\leq_{\mu[i]}\mu[i;\max].
\end{eqnarray*}
Then, for any given open sets $U,V,W\subset\mathbb{R}^{n}$, composition
induces a map\footnote{In the subspace topology this is actually a continuous map, but we
will not need that here.}
\[
\overline{\circ}:B_{\alpha,\beta}^{k,n}(U,V;\mathbb{X})\times B_{\alpha,\beta}^{k,n}(V,W;\mathbb{X})\rightarrow B_{\alpha_{\leq},\beta_{\leq}}^{k,n}(U,W;\mathbb{X}),
\]
where $B_{\alpha,\beta}^{k,n}(U,V;\mathbb{X})$ is the subspace of
$(B,k,\alpha,\beta)$-functions $f:U\rightarrow\mathbb{R}^{n}$ such
that $f(U)\subset V$, and 
\begin{eqnarray*}
\alpha_{\leq}(i) & = & \delta(\overline{\epsilon}_{\mu[i]_{\max}},\delta(\overline{\epsilon}_{\mu[i]_{\max-1}},\delta(\overline{\epsilon}_{\mu[i]_{\max-2}},\cdots,\delta(\overline{\epsilon}_{\mu[i]_{\min+1}},\overline{\epsilon}_{\mu[i]_{\min}})\cdots)))\\
\beta_{\leq}(i) & = & \max_{\mu[i]}\max_{\mu[i;r]}\{\beta(\vert\mu[i]\vert),\beta(\vert\mu[i;r]\vert)\}.
\end{eqnarray*}
Here, if $\mu[i]$ is some partition of $[i]$, then 
\[
\overline{\epsilon}_{\mu[i]}=\epsilon(\alpha(\vert\mu[i]\vert),\epsilon(\mu_{\alpha}[i])),
\]
where for any block $\mu[i;r]$ of $\mu[i]$ we denote $\vert\mu_{\alpha}[i;r]\vert\equiv\alpha(\vert\mu[i;r]\vert)$
and 
\[
\epsilon(\mu_{\alpha}[i])\equiv\epsilon(\vert\mu_{\alpha}[i;\max]\vert,\epsilon(\vert\mu_{\alpha}[i;\max-1]\vert,\vert\mu_{\alpha}[i;\max-2]\vert,\cdots\epsilon(\vert\mu_{\alpha}[i;\min+1]\vert,\vert\mu_{\alpha}[i;\min]\vert)\cdots))).
\]
\end{lem}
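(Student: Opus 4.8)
The plan is to run the multivariate Fa\`{a} di Bruno formula and then track how the parameters $\epsilon$ and $\delta$ of $B$ record the iterated multiplications and additions that appear in it. First I would fix $f\in B_{\alpha,\beta}^{k,n}(U,V;\mathbb{X})$, $g\in B_{\alpha,\beta}^{k,n}(V,W;\mathbb{X})$, a component index $l$, and a multi-index $\mu$ with $\vert\mu\vert=i$, and expand
\[
\partial^{\mu}(g\circ f)_{l}=\sum_{\mu[i]\in\Pi(i)}c_{\mu[i]}\,\bigl(\partial^{|\mu[i]|}g_{l}\circ f\bigr)\cdot\prod_{r}\partial^{|\mu[i;r]|}f_{t(r)},
\]
the outer sum over set partitions of the $i$ differentiation slots, the inner product over the blocks $\mu[i;r]$ of $\mu[i]$, with positive integer coefficients $c_{\mu[i]}$ and auxiliary component indices $t(r)$ that also index the derivative $\partial^{|\mu[i]|}g_{l}$, whose order equals the number of blocks. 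Each $\partial^{|\mu[i;r]|}f_{t(r)}$ lies in $B_{\alpha(|\mu[i;r]|)}(U)\cap_{X(U)}C^{k-\beta(|\mu[i;r]|)}(U)$ because $f$ is a $(B,k,\alpha,\beta)$-function, and each $\partial^{|\mu[i]|}g_{l}\circ f$ lies in $B_{\alpha(|\mu[i]|)}(U)\cap_{X(U)}C^{k-\beta(|\mu[i]|)}(U)$: the $C^{k-\beta}$-component because precomposing a $C^{k-\beta(j)}$-function with the $C^{k}$-map $f$ is again $C^{k-\beta(j)}$ (as $\beta(j)\ge 0$), and the $B_{\alpha}$-component because $g$ is a $(B,k,\alpha,\beta)$-function and precomposition with $f$ preserves the relevant structure in $\mathbb{X}$, as discussed at the end.

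Next I would transport these memberships through the algebra. Because $B$ is a presheaf of distributive $\Gamma_{\ge0}$-spaces and, by the defining condition of a $C_{n,\beta}^{k,\alpha}$-presheaf, its multiplicative structure $*$ and additive structure $+$ restrict on the intersection spaces to pointwise multiplication and addition of differentiable functions, the right-hand side above may be evaluated inside the intersection presheaves. For a fixed $\mu[i]$, multiplying its block-factors in the order $\le_{\mu[i]}$ lands in $B_{\epsilon(\mu_{\alpha}[i])}$, and multiplying further by $\partial^{|\mu[i]|}g_{l}\circ f\in B_{\alpha(|\mu[i]|)}$ lands in $B_{\overline{\epsilon}_{\mu[i]}}$ with $\overline{\epsilon}_{\mu[i]}=\epsilon(\alpha(|\mu[i]|),\epsilon(\mu_{\alpha}[i]))$, exactly as in the statement; on the $C^{k-\beta}$ side each pointwise product takes the minimal regularity, i.e. $C^{k-M(\mu[i])}$ where $M(\mu[i])=\max_{r}\{\beta(|\mu[i]|),\beta(|\mu[i;r]|)\}$. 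Summing over $\Pi(i)$ in the order $\le_{i}$ via the additive structure $\delta$, and using that pointwise sums again take the minimal $C$-regularity, yields $\partial^{\mu}(g\circ f)_{l}\in B_{\alpha_{\le}(i)}(U)\cap_{X(U)}C^{k-\beta_{\le}(i)}(U)$ with $\alpha_{\le}(i)$ the iterated $\delta$ of the $\overline{\epsilon}_{\mu[i]}$ in the order $\le_{i}$ and $\beta_{\le}(i)=\max_{\mu[i]}M(\mu[i])$, which are the two functions in the statement. Since this holds for every $l$ and every $\vert\mu\vert=i$, the composite $g\circ f$ is a $(B,k,\alpha_{\le},\beta_{\le})$-function, which is all $\overline{\circ}$ asserts; the continuity recorded in the footnote is then immediate from continuity of the structure maps $*,+$ and of the jet map $j^{k}$, as in Proposition~\ref{prop_compatible_is_Frechet}. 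It is worth stressing why the two orderings are part of the hypothesis: $\epsilon$ and $\delta$ are arbitrary binary operations on $\Gamma_{\ge0}$, with neither associativity nor commutativity assumed, so the iterated expressions $\epsilon(\mu_{\alpha}[i])$, $\overline{\epsilon}_{\mu[i]}$ and $\alpha_{\le}(i)$ only become well-defined once a bracketing and a left-to-right order are fixed; the orderings $\le_{i}$ on $\Pi(i)$ and $\le_{\mu[i]}$ on its blocks do precisely that, and the lemma is understood to hold for any such choice.

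The single nontrivial input, and the step I expect to be the main obstacle, is the claim used above that precomposition with the $C^{k}$-map $f$ carries the intersection presheaf $B_{\alpha(j)}\cap_{X}C^{k-\beta(j)}$ over $V$ into the corresponding presheaf over $U$, compatibly with the maps $u'$ into $C^{k-\beta(j)}$. The $C^{k-\beta(j)}$-part is the classical chain-rule estimate; the content is the $B_{\alpha(j)}$-part, where one uses that $\mathbb{X}$ is a proper full ISP, so that through $u'$ an element of $B_{\alpha(j)}\cap_{X}C^{k-\beta(j)}$ is a genuine $C^{k-\beta(j)}$-function carrying extra $B$-regularity, and the hypothesis that $f$ is itself a $(B,k,\alpha,\beta)$-function is what makes this extra regularity stable under the substitution $x\mapsto f(x)$. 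Once this stability is available, the remainder of the proof is the forced bookkeeping sketched above; if one prefers, this stability may be imposed as a standing condition on the pair $(B,\mathbb{X})$, under which the lemma holds as stated.
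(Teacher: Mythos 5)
Your proof follows essentially the same route as the paper's: expand $\partial^{\mu}(g\circ f)$ by Fa\`a di Bruno, place each factor in the appropriate intersection space, and then push the products and sums through the multiplicative structure $(*,\epsilon)$ and the additive structure $(+,\delta)$ in the prescribed orders to read off $\alpha_{\leq}$ and $\beta_{\leq}$. The role you assign to the orderings $\leq_{i}$ and $\leq_{\mu[i]}$ (fixing a bracketing for the non-associative, non-commutative $\epsilon$ and $\delta$) and the $\max$ bookkeeping on the $C^{k-\beta}$ side also match the paper exactly.

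The one place where you go beyond the paper is the precomposition step, and your instinct there is right. You isolate as the nontrivial input the claim that $\partial^{|\mu[i]|}g_{l}\circ f$ lands in $B_{\alpha(|\mu[i]|)}(U)\cap_{X(U)}C^{k-\beta(|\mu[i]|)}(U)$. The paper's own proof records only $\partial^{\mu[i]}g_{j}\in B_{\alpha(|\mu[i]|)}(V)\cap_{X(V)}C^{k-\beta(|\mu[i]|)}(V)$ --- a space over $V$ --- and then immediately multiplies it against $f_{j}^{\mu[i]}$, which lives over $U$, placing the product in a space over $U$; the pullback along $f$ is silently assumed to preserve $B$-membership. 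So the obstacle you flag is not an artifact of your write-up but a gap shared with the paper's argument, and your proposal to impose stability of the $B$-regularity under the substitution $x\mapsto f(x)$ as a standing condition on the pair $(B,\mathbb{X})$ is the honest way to close it (the $C^{k-\beta}$ part is indeed just the classical chain rule). With that hypothesis made explicit, the rest of your argument is the same forced bookkeeping as the paper's and is correct.
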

\begin{proof}
The proof follows from Fa\`a di Bruno's formula giving a chain rule
for higher order derivatives \citep{key-1} and from the compatibility
between the multiplicative/additive structures of $B_{\alpha}$ and
$C^{k-\beta}$. First of all, notice that if $C^{k}(U;V)$ is the
set of all $C^{k}$-functions $f:U\rightarrow\mathbb{R}^{n}$ such
that $f(U)\subset V$, then for any $g\in C^{k}(V;W)$ the composition
$g\circ f$ is well-defined. Thus, our task is to show that there
exists the dotted arrow making commutative the diagram below.$$
\xymatrix{C^k(U;V)\times C^k(V;W) \ar[r]^-{\circ} & C^k(U;W) \\
\ar@{^(->}[u] B_{\alpha,\beta}^{k}(U,V;\mathbb{X}) \times B_{\alpha,\beta}^{k}(V,W;\mathbb{X}) \ar@{-->}[r]_-{\overline{\circ}} & B_{\alpha,\beta}^{k}(U,W;\mathbb{X}) \ar@{^(->}[u]}
$$Now, let $f\in C^{k}(U;V)$ and $g\in C^{k}(V;W)$, so that from Fa\`a
di Bruno's formula, for any $0\leq i\leq k$ and any multi-index $\mu$
such that $\vert\mu\vert=i$, we have 
\[
\partial^{\mu}(g\circ f)_{j}=\sum_{j}\sum_{\mu[i]}\partial^{\mu[i]}g_{j}\prod_{\mu[i;r]\in\mu[i]}\partial^{\mu[i;r]}f_{j}\equiv\sum_{j}\sum_{\mu[i]}\partial^{\mu[i]}g_{j}f_{j}^{\mu[i]}.
\]
Consequently, if $f$ and $g$ are actually $(B,k,\alpha,\beta)$-functions
in $\mathbb{X}$, then 
\[
\partial^{\mu[i]}g_{j}\in B_{\alpha(\vert\mu[i]\vert)}(V)\cap_{X(V)}C^{k-\beta(\vert\mu[i]\vert)}(V)\quad\text{and}\quad\partial^{\mu[i;r]}f_{j}\in B_{\alpha(\vert\mu[i;r]\vert)}(U)\cap_{X(U)}C^{k-\beta(\vert\mu[i;r]\vert)}(U).
\]
Under the choice of ordering functions $\leq$, from the compatibility
of multiplicative structures we see that
\begin{eqnarray*}
f_{j}^{\mu[i]} & \in & B_{\epsilon(\mu_{\alpha}[i])}(U)\cap_{X(U)}C^{k-\underset{\mu[i;r]}{\max}\{\beta(\vert\mu[i;r]\vert)\}}(U)\\
\partial^{\mu[i]}g_{j}f_{j}^{\mu[i]} & \in & B_{\overline{\epsilon}_{\mu[i]}}(U)\cap_{X(U)}C^{k-\underset{\mu[i;r]}{\max}\{\beta(\vert\mu[i;r]\vert),\beta(\vert\mu[i]\vert)\}}(U).
\end{eqnarray*}
Finally, compatibility of additive structures shows that $\partial^{\mu}(g\circ f)_{j}\in B_{\alpha_{\leq}(i)}(U)\cap_{X(U)}C^{k-\beta_{\leq}(i)}(U)$,
so that by varying $i$ we conclude that $g\circ f$ is a $(B,k,\alpha_{\leq},\beta_{\leq})$-function
in $\mathbb{X}$.
\end{proof}
\begin{rem}
Unlike $B_{\alpha,\beta}^{k}(-;\mathbb{X})$, the rule $U\mapsto B_{\alpha,\beta}^{k}(U,U;\mathbb{X})$
is generally not a presehaf, since the restriction of a map $f:U\rightarrow U$
to an open set $V\subset U$ needs not take values in $V$.
\end{rem}
The problem with the composition can be avoided by imposing conditions
on $B$. Indeed, given a ordering function $\leq$ as above, let us
say that a $C_{\alpha,\beta}^{k,n}$-presheaf $B$ \emph{preserves
$\leq$ }(or that it is \emph{ordered}) \emph{in $\mathbb{X}$} if
there exists an embedding of presheaves $B_{\alpha_{\leq}}\cap_{X}C^{k-\beta_{\leq}}\hookrightarrow B_{\alpha}\cap_{X}C^{k-\beta}$.
\begin{example}
We say that $B$ is \emph{increasing }(resp. \emph{decreasing}) if
for any $U$ we have embeddings $B(U)_{i}\hookrightarrow B(U)_{j}$
(resp. $B(U)_{j}\hookrightarrow B(U)_{i}$) whenever $i\leq j$, where
the order is the canonical order in $\Gamma_{\geq0}\subset\mathbb{Z}_{\geq0}$.
Suppose that $\beta_{\leq}(i)\leq\beta(i)$, that $B$ is increasing
(resp. decreasing) and that $\alpha_{\leq}(i)\leq\alpha(i)$ (resp.
$\alpha_{\leq}(i)\geq\alpha(i)$). Thus, for any $U$ and any $i$
we have embeddings $B(U)_{\alpha_{\leq}(i)}\hookrightarrow B(U)_{\alpha(i)}$
and $B(U)_{\beta_{\leq}(i)}\hookrightarrow B(U)_{\beta(i)}$, so that
by the universality of pullbacks and stability of monomorphisms we
see that any intersection presheaf makes $B$ ordered.
\end{example}
\begin{cor}
With the same notations and hypotheses of the previous lemma, if $B$
is ordered relative to some intersection presheaf $\mathbb{X}$, then
the composition induces a map
\[
\overline{\circ}:B_{\alpha,\beta}^{k}(U,V;\mathbb{X})\times B_{\alpha,\beta}^{k}(V,W;\mathbb{X})\rightarrow B_{\alpha,\beta}^{k}(U,W;\mathbb{X}).
\]
\end{cor}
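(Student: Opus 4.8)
The plan is to read the corollary off the preceding lemma by one extra corestriction step, the only new input being the definition of an ordered presheaf. First I would fix $f\in B_{\alpha,\beta}^{k}(U,V;\mathbb{X})$ and $g\in B_{\alpha,\beta}^{k}(V,W;\mathbb{X})$. By the lemma the composite $h=g\circ f$ is a $C^{k}$-map $U\to\mathbb{R}^{n}$ with $h(U)\subset W$ and $\partial^{\mu}h_{j}\in B_{\alpha_{\leq}(i)}(U)\cap_{X(U)}C^{k-\beta_{\leq}(i)}(U)$ for every $j$ and every $\vert\mu\vert=i$; that is, $h\in B_{\alpha_{\leq},\beta_{\leq}}^{k}(U,W;\mathbb{X})$. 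Hence it suffices to show $B_{\alpha_{\leq},\beta_{\leq}}^{k}(U,W;\mathbb{X})\subseteq B_{\alpha,\beta}^{k}(U,W;\mathbb{X})$ as subsets of $C^{k}(U;\mathbb{R}^{n})$, after which $\overline{\circ}$ is simply the corestriction of ordinary composition $C^{k}(U;V)\times C^{k}(V;W)\to C^{k}(U;W)$ along the evident inclusions of the $B_{\alpha,\beta}^{k}$-subspaces.

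For that inclusion I would invoke the hypothesis that $B$ preserves $\leq$ in $\mathbb{X}$: by definition there is an embedding of presheaves $\theta:B_{\alpha_{\leq}}\cap_{X}C^{k-\beta_{\leq}}\hookrightarrow B_{\alpha}\cap_{X}C^{k-\beta}$, i.e. at each $U$ and each $i$ a monomorphism $\theta_{U,i}:B_{\alpha_{\leq}(i)}(U)\cap_{X(U)}C^{k-\beta_{\leq}(i)}(U)\hookrightarrow B_{\alpha(i)}(U)\cap_{X(U)}C^{k-\beta(i)}(U)$ in $\mathbf{B}_{n}$. Applying $\theta_{U,i}$ to the factorizations of $\partial^{\mu}h_{j}$ supplied above exhibits each $\partial^{\mu}h_{j}$ as an element of $B_{\alpha(i)}(U)\cap_{X(U)}C^{k-\beta(i)}(U)$; since $\theta$ is compatible with the projections $u'$ down to the $C^{k-\beta}$-presheaf (the two being related by the canonical inclusions $C^{k-\beta_{\leq}(i)}(U)\hookrightarrow C^{k-\beta(i)}(U)$), the element so obtained represents the same $C^{k-i}$-function $\partial^{\mu}h_{j}$. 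Reading off the definition of a $(B,k,\alpha,\beta)$-function then gives $h\in B_{\alpha,\beta}^{k}(U,W;\mathbb{X})$, which is exactly what is needed.

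The one point that genuinely needs care — and which I would isolate as a short remark rather than grind through — is the compatibility just used: that the abstract embedding $\theta$ furnished by orderedness respects the concrete interpretation of the intersection presheaves as spaces of differentiable functions, i.e. that $u'_{\alpha}\circ\theta_{U,i}$ coincides with $u'_{\alpha_{\leq}}$ followed by the inclusion $C^{k-\beta_{\leq}(i)}(U)\hookrightarrow C^{k-\beta(i)}(U)$. In all cases of interest — in particular whenever $B$ is increasing or decreasing and $\alpha_{\leq},\beta_{\leq}$ compare with $\alpha,\beta$ as in the Example following the lemma — the embedding $\theta$ is produced from the componentwise inclusions $B(U)_{\alpha_{\leq}(i)}\hookrightarrow B(U)_{\alpha(i)}$ and $C^{k-\beta_{\leq}(i)}(U)\hookrightarrow C^{k-\beta(i)}(U)$ via the universal property of the pullbacks defining the intersection presheaves, so this compatibility holds by construction; should one want the corollary for a completely arbitrary witnessing embedding, one would simply fold this compatibility into the definition of \emph{ordered}. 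Modulo that bookkeeping the argument is a routine corestriction, and I expect no further obstacle.
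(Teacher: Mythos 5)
Your argument is correct and is exactly the intended one: the paper's proof of this corollary is simply ``Straightforward,'' meaning precisely the corestriction you describe --- the lemma places the composite in $B_{\alpha_{\leq},\beta_{\leq}}^{k}(U,W;\mathbb{X})$ and the orderedness embedding $B_{\alpha_{\leq}}\cap_{X}C^{k-\beta_{\leq}}\hookrightarrow B_{\alpha}\cap_{X}C^{k-\beta}$ transports it into $B_{\alpha,\beta}^{k}(U,W;\mathbb{X})$. The compatibility point you isolate (that the abstract embedding respects the projections to the $C^{k-\beta}$ components, so that the \emph{same} function $\partial^{\mu}h_{j}$ is being relocated) is a genuine piece of bookkeeping the paper leaves implicit, and your resolution of it via universality of pullbacks matches how the paper's own Example on increasing/decreasing presheaves produces the ordered structure.
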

\begin{proof}
Straightforward.
\end{proof}
On the other hand, we also have problems with the identities: the
identity map $id:U\rightarrow U$ is not necessarily a $(B,k,\alpha,\beta)$-function
in an arbitrary intersection structure $\mathbb{X}$ for an arbitrary
$B$. We say that $B$ is \emph{unital in $\mathbb{X}$} if $id_{U}\in B_{\alpha,\beta}^{k}(U;\mathbb{X)}$
for every open set $U\subset\mathbb{R}^{n}$. Thus, with this discussion
we have proved:
\begin{prop}
\label{category_B_k_a_b} If $B$ is as $C_{n,\beta}^{k,\alpha}$-presheaf
which is ordered and unital in some intersection presheaf $\mathbb{X}$,
then the category $\mathbf{Diff}_{\alpha,\beta}^{B,k}(\mathbb{X})$
is well-defined.
\end{prop}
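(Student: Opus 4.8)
The statement is essentially a bookkeeping assertion: once $B$ is ordered and unital in $\mathbb{X}$, all the data needed to assemble a category have already been produced by the preceding lemma and corollary. So the plan is to verify the three axioms of a category for $\mathbf{Diff}_{\alpha,\beta}^{B,k}(\mathbb{X})$ — well-definedness of composition, associativity, and existence of identities — by reducing each to the corresponding fact for $C^{k}$-manifolds and $C^{k}$-maps, which is classical, and then checking that the extra regularity conditions defining $(B_{\alpha,\beta}^{k},\mathbb{X})$-morphisms are stable under these operations.

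\textbf{First step: objects and morphisms are legitimate.} I would note that the objects are $(B_{\alpha,\beta}^{k},\mathbb{X})$-manifolds, i.e.\ $C^{k}$-manifolds equipped with a chosen subatlas $\mathcal{B}_{\alpha,\beta}^{k}(\mathbb{X})$ whose transition functions lie in $B_{\alpha,\beta}^{k,n}(\varphi_i(U_{ij});\mathbb{X})$; these exist and the class is nonempty (e.g.\ by the examples, when $\mathbb{X}$ is standard vectorial). A morphism $f\colon M\to M'$ is a $C^{k}$-map with $\phi\circ f\circ\varphi^{-1}\in B_{\alpha,\beta}^{k,n}(\varphi(U);\mathbb{X})$ for all $\varphi\in\mathcal{B}_{\alpha,\beta}^{k}(\mathbb{X})$, $\phi\in\mathcal{B}'^{k}_{\alpha,\beta}(\mathbb{X})$. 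Since $B_{\alpha,\beta}^{k,n}(U;\mathbb{X})$ is a well-defined subset of $C^{k}(U;\mathbb{R}^{n})$ for every $U$ (Proposition \ref{prop_compatible_is_Frechet}), the morphism sets are well-defined subsets of the usual $C^{k}$-morphism sets.

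\textbf{Second step: composition is well-defined.} Given $f\colon M\to M'$ and $g\colon M'\to M''$, both are $C^{k}$, so $g\circ f$ is $C^{k}$. To check it is a $(B_{\alpha,\beta}^{k},\mathbb{X})$-morphism I pick charts $\varphi\in\mathcal{B}_{\alpha,\beta}^{k}(\mathbb{X})$ on $M$ and $\chi\in\mathcal{B}''^{k}_{\alpha,\beta}(\mathbb{X})$ on $M''$, insert an intermediate chart $\phi\in\mathcal{B}'^{k}_{\alpha,\beta}(\mathbb{X})$ on $M'$, and write $\chi\circ(g\circ f)\circ\varphi^{-1}=(\chi\circ g\circ\phi^{-1})\circ(\phi\circ f\circ\varphi^{-1})$ on suitable open subsets (shrinking the chart domains as necessary, exactly as in the smooth case, and using that $\mathcal{B}'^{k}_{\alpha,\beta}(\mathbb{X})$ covers $M'$). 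Each factor is a $(B,k,\alpha,\beta)$-function in $\mathbb{X}$ by hypothesis, so by the Lemma their composite lies a priori in $B_{\alpha_{\leq},\beta_{\leq}}^{k,n}$, and then the hypothesis that $B$ is \emph{ordered} in $\mathbb{X}$ — via the Corollary immediately preceding this proposition — upgrades this to membership in $B_{\alpha,\beta}^{k,n}$. Hence $g\circ f$ is a $(B_{\alpha,\beta}^{k},\mathbb{X})$-morphism. Associativity is then inherited verbatim from associativity of composition of $C^{k}$-maps, since the underlying maps and chart representatives are the same.

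\textbf{Third step: identities.} For each object $M$ I must exhibit $\mathrm{id}_M$ as a $(B_{\alpha,\beta}^{k},\mathbb{X})$-morphism. Its chart representative with respect to $\varphi,\psi\in\mathcal{B}_{\alpha,\beta}^{k}(\mathbb{X})$ is the transition function $\psi\circ\varphi^{-1}$, which lies in $B_{\alpha,\beta}^{k,n}(\varphi(U_{\varphi\psi});\mathbb{X})$ by the very definition of a $(B_{\alpha,\beta}^{k},\mathbb{X})$-structure; in particular the case $\psi=\varphi$ is handled by the unitality hypothesis $\mathrm{id}_U\in B_{\alpha,\beta}^{k}(U;\mathbb{X})$. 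That $\mathrm{id}_M$ is a two-sided unit for $\overline{\circ}$ is again immediate from the corresponding fact for $C^{k}$-maps. This exhausts the category axioms, so $\mathbf{Diff}_{\alpha,\beta}^{B,k}(\mathbb{X})$ is well-defined.

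\textbf{Main obstacle.} The only genuine subtlety — and the reason both hypotheses are needed — is in the second step: a priori composition lands in $B_{\alpha_{\leq},\beta_{\leq}}^{k}$ rather than $B_{\alpha,\beta}^{k}$ (this is precisely what the Lemma computes via Fa\`a di Bruno and what the preceding Remark warns about), so without the ordering condition the morphism class is not closed under composition and one does not get a category at all. Everything else is routine transport of the classical manifold formalism through the chosen subatlases.
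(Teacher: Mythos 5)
Your proposal is correct and follows essentially the same route as the paper, whose proof of this proposition is simply the preceding discussion: the Lemma together with the ordering hypothesis (via the Corollary) gives closure of the local representatives under composition, and unitality supplies the identities, with associativity inherited from $C^{k}$-maps. You have merely spelled out the bookkeeping that the paper leaves implicit.
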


\section{Theorem B\label{sec_existence}}

$\quad\;\,$In this section we will finally prove an existence theorem
of $(B_{\alpha,\beta}^{k},\mathbb{X})$-structures on $C^{k}$-manifolds
under certain conditions on $B$, meaning absorption and retraction
conditions, culminating in Theorem B. Given a $C_{n,\beta}^{k,\alpha}$-presheaf
$B$ in $\mathbb{X}$ and open sets $U,V\subset\mathbb{R}^{n}$, let
$\operatorname{Diff}_{\alpha,\beta}^{k}(U,V;\mathbb{X})$ be the set
of \emph{$(B,k,\alpha,\beta)$-diffeomorphisms} from $U$ to $V$
in $\mathbb{X}$, i.e, the largest subset for which there exists the
dotted arrow below.$$
\xymatrix{B_{\alpha,\beta}^{k}(U,V;\mathbb{X}) \ar@{^(->}[r] & C^k(U;V) \\
\ar@{^(->}[u] \operatorname{Diff}_{\alpha,\beta}^{k}(U,V;\mathbb{X}) \ar@{-->}[r] & \operatorname{Diff}^{k}(U;V) \ar@{^(->}[u]}
$$

We say that $B$ is \emph{left-absorbing }(resp. \emph{right-absorbing})
\emph{in $\mathbb{X}$} if for every $U,V,W$ there also exists the
dotted arrow in the lower (resp. upper) square below, i.e, $g\circ f$
remains a $(B,k,\alpha,\beta)$-diffeomorphism whenever $f$ (resp.
$g$) is a $(B,k,\alpha,\beta)$-diffeomorphism and $g$ (resp. $f$)
is a $C^{k}$-diffeomorphism. If $B$ is both left-absorbing and right-absorbing,
we say simply that it is \emph{absorbing in $\mathbb{X}$}.$$
\xymatrix{\ar@{^(->}[d]_{id \times \imath} \operatorname{Diff}^{k}(U;V) \times \operatorname{Diff}_{\alpha,\beta}^{k}(V,W;\mathbb{X}) \ar@{-->}[r]^-{\circ _r} & \operatorname{Diff}_{\alpha,\beta}^{k}(U,W;\mathbb{X}) \ar@{^(->}[d] \\
\operatorname{Diff}^{k}(U;V) \times \operatorname{Diff}^{k}(V;W) \ar[r]^-{\circ} & \operatorname{Diff}^{k}(U;W) \\
\ar@{^(->}[u]^{\imath \times id} \operatorname{Diff}_{\alpha,\beta}^{k}(U,V;\mathbb{X}) \times  \operatorname{Diff}^{k}(V;W) \ar@{-->}[r]_-{\circ _l} & \operatorname{Diff}_{\alpha,\beta}^{k}(U,W;\mathbb{X}) \ar@{^(->}[u]}
$$

A more abstract description of these absorbing properties is as follows.
Let $\mathbf{C}$ be an arbitrary category and let $\operatorname{Iso}(\mathbf{C})$
the set of isomorphisms in $\mathbf{C}$, i.e,
\[
\operatorname{Iso}(\mathbf{C})=\coprod_{X,Y\in\operatorname{Ob}(\mathbf{C})}\operatorname{Iso}_{\mathbf{C}}(X;Y).
\]
Let $\operatorname{Com}(\mathbf{C})\subset\operatorname{Iso}(\mathbf{C})\times\operatorname{Iso}(\mathbf{C})$
be the pullback between the source and target maps $s,t:\operatorname{Iso}(\mathbf{C})\rightarrow\operatorname{Ob}(\mathbf{C})$.
Composition gives a function $\circ:\operatorname{Com}(\mathbf{C})\rightarrow\operatorname{Iso}(\mathbf{C})$.
If $\mathbf{C}$ has a distinguished object $*$, we can extend $\circ$
to the whole $\operatorname{Iso}(\mathbf{C})\times\operatorname{Iso}(\mathbf{C})$
by defining $\circ_{*}$, such that $g\circ_{*}f=g\circ f$ when $(f,g)\in\operatorname{Com}(\mathbf{C})$
and $g\circ_{*}f=id_{*}$, otherwise. Thus, $(\operatorname{Iso}(\mathbf{C}),\circ_{*})$
is a magma. Define a \emph{left $*$-ideal }(resp. \emph{right $*$-ideal})
in $\mathbf{C}$ as a map $I$ assigning to each pair of objects $X,Y\in\mathbf{C}$
a subset $I(X;Y)\subset\operatorname{Iso}_{\mathbf{C}}(X;Y)$ such
that the corresponding subset 
\[
I(\mathbf{C})=\coprod_{X,Y\in\operatorname{Ob}(\mathbf{C})}I(X;Y)
\]
of $\operatorname{Iso}(\mathbf{C})$ is actually a left ideal (resp.
right ideal) for the magma structure induced by $\circ_{*}$. A \emph{bilateral
$*$-ideal }(or \emph{$*$-ideal}) in $\mathbf{C}$ is a map $I$
which is both left and right $*$-ideal. When $*$ is an initial object,
we say simply that $I$ is a left-ideal, right-ideal or ideal in $\mathbf{C}$.
\begin{prop}
A $C_{n,\beta}^{k,\alpha}$-presheaf $B$ is left-absorbing (resp.
right-absorbing or absorbing) in a proper full ISP $\mathbb{X}$ iff
the induced rule 
\[
U,V\mapsto\operatorname{Diff}_{\alpha,\beta}^{k,n}(U,V;\mathbb{X})
\]
is a left ideal (resp. right ideal or ideal) in the full subcategory
of $\mathbf{Diff}^{k}$, consisting of open sets of $\mathbb{R}^{n}$
and $C^{k}$-maps between them.
\end{prop}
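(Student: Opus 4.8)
The plan is to read the statement as nothing more than an unwinding of the definition of a ($*$-)ideal in the magma $(\operatorname{Iso}(\mathbf{C}),\circ_{*})$, where $\mathbf{C}$ denotes the full subcategory of $\mathbf{Diff}^{k}$ on open subsets of $\mathbb{R}^{n}$. First I would record the two bookkeeping facts that make the translation run. The isomorphisms of $\mathbf{C}$ are precisely the $C^{k}$-diffeomorphisms, so $\operatorname{Iso}_{\mathbf{C}}(U;V)=\operatorname{Diff}^{k}(U;V)$; and $\varnothing$ is an initial object of $\mathbf{C}$, so it serves as the distinguished object $*$ with respect to which an \emph{ideal} (as opposed to merely a $*$-ideal) is defined. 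Since $\operatorname{Diff}_{\alpha,\beta}^{k,n}(U,V;\mathbb{X})$ is by construction a subset of $\operatorname{Diff}^{k}(U;V)$ (the dotted arrow of the defining square is an inclusion), the rule $I\colon U,V\mapsto\operatorname{Diff}_{\alpha,\beta}^{k,n}(U,V;\mathbb{X})$ assigns to each pair of objects a subset $I(U;V)\subseteq\operatorname{Iso}_{\mathbf{C}}(U;V)$, hence is a legitimate candidate to be a left, right or bilateral ideal; everything then reduces to matching the ideal axioms against the absorbing squares.

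Before unwinding I would dispose of the behaviour of $\circ_{*}$ on non-composable pairs. If $(f,g)$ is not composable, then $g\circ_{*}f=\operatorname{id}_{\varnothing}$, and the unique map $\varnothing\to\varnothing$ is a $C^{k}$-diffeomorphism all of whose derivative conditions $\partial^{\mu}f_{j}\in B_{\alpha(i)}(\varnothing)\cap_{X(\varnothing)}C^{k-\beta(i)}(\varnothing)$ hold vacuously; thus $\operatorname{id}_{\varnothing}\in B_{\alpha,\beta}^{k}(\varnothing;\mathbb{X})$ and therefore $\operatorname{id}_{\varnothing}\in I(\varnothing;\varnothing)\subseteq I(\mathbf{C})$. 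Consequently the required closure $g\circ_{*}f\in I(\mathbf{C})$ holds automatically on non-composable pairs, so the ideal axioms carry content only on $\operatorname{Com}(\mathbf{C})$, where $\circ_{*}$ is ordinary composition.

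With this in hand the translation is direct. Saying that $I(\mathbf{C})$ is a left ideal of $(\operatorname{Iso}(\mathbf{C}),\circ_{*})$, i.e. that $g\circ_{*}f\in I(\mathbf{C})$ whenever $f\in I(\mathbf{C})$ and $g\in\operatorname{Iso}(\mathbf{C})$, amounts (once the non-composable case is discarded) to the assertion that for all open $U,V,W\subseteq\mathbb{R}^{n}$ one has $g\circ f\in\operatorname{Diff}_{\alpha,\beta}^{k,n}(U,W;\mathbb{X})$ whenever $f\in\operatorname{Diff}_{\alpha,\beta}^{k,n}(U,V;\mathbb{X})$ and $g\in\operatorname{Diff}^{k}(V;W)$, which is exactly the existence of the dotted arrow $\circ_{l}$ in the lower square, i.e. that $B$ is left-absorbing in $\mathbb{X}$. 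Dually, the right-ideal axiom restricted to composable pairs reads $g\circ f\in\operatorname{Diff}_{\alpha,\beta}^{k,n}(U,W;\mathbb{X})$ whenever $g\in\operatorname{Diff}_{\alpha,\beta}^{k,n}(V,W;\mathbb{X})$ and $f\in\operatorname{Diff}^{k}(U;V)$, which is the existence of $\circ_{r}$ in the upper square, i.e. right-absorption; and a bilateral ideal is by definition the conjunction of the two, matching ``absorbing''. Since each step here is an equivalence, both directions of the proposition follow simultaneously.

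The argument is a pure definition chase, so there is no deep obstacle. The two points that demand care are: (a) the handedness --- checking that the left-ideal axiom (closure under left multiplication in $(\operatorname{Iso}(\mathbf{C}),\circ_{*})$) indeed corresponds to composing an arbitrary $C^{k}$-diffeomorphism \emph{after} a $(B,k,\alpha,\beta)$-diffeomorphism, which is the paper's left-absorbing, rather than the opposite side; and (b) the base case $\operatorname{id}_{\varnothing}\in I(\mathbf{C})$, without which the extension $\circ_{*}$ of composition to non-composable pairs would impose a spurious additional constraint and break the equivalence.
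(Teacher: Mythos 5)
Your proposal is correct and takes essentially the same route as the paper, whose entire proof is ``Immediate from the definitions above''; you simply spell out the definition chase, including the two points the paper leaves implicit (that $\varnothing$ is the initial object serving as $*$, with $\operatorname{id}_{\varnothing}$ vacuously a $(B,k,\alpha,\beta)$-diffeomorphism so that non-composable pairs impose no extra constraint, and that the handedness of the ideal axioms matches the handedness of the absorbing squares).
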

\begin{proof}
Immediate from the definitions above.
\end{proof}
Notice that in the context of vector spaces, since these are free
abelian objects, the short exact sequence below always split, so that
from the splitting lemma we conclude the existence of a retraction
$r_{U}$, such that $r_{U}\circ\imath=id$, for every $U$ \citep{algebra_homologica}.$$
\xymatrix{0 \ar[r] & B_{\alpha,\beta}^{k}(U;\mathbb{X}) \ar@{^(->}[r]^-{\imath} & \ar@/_{0.5cm}/[l]_{r_U} C^k(U;\mathbb{R}^n) \ar[r]^-{\pi} & C^k(U.\mathbb{R}^n)/B_{\alpha,\beta}^{k}(U;\mathbb{X}) \ar[r] & 0 }
$$

By restriction, for each $V$ we have an induced retraction $r_{U,V}$,
as in the first diagram below. On the other hand, the dotted arrow
does not necessarily exists. In other words, $r_{U,V}$ need not preserve
diffeomorphisms. We say that $B$ has \emph{retractible $(B,k,\alpha,\beta)$-diffeomorphisms
in $\mathbb{X}$} if for every $U,V$ there exist $\overline{r}_{U,V}$
in the second diagram, not necessarily making the first diagram commutative.
A \emph{retraction presheaf in $\mathbb{X}$ }for $B$ is a rule $\overline{r}$
, assigning to each $U,V$ a retraction $\overline{r}_{U,V}$.\begin{equation}{\label{retraction_presheaf}
\xymatrix{B_{\alpha,\beta}^{k}(U,V;\mathbb{X}) \ar@{^(->}[r]^{\imath} & \ar@/_{0.5cm}/[l]_{r_{U,V}} C^k(U;V) &  \operatorname{Diff}_{\alpha,\beta}^{k}(U,V;\mathbb{X}) \ar@{^(->}[r]_-{\imath _{U,V}} & \operatorname{Diff}^{k}(U;V) \ar@{-->}@/_{0.5cm}/[l]_{\overline{r}_{U,V}} \\
\ar@{^(->}[u] \operatorname{Diff}_{\alpha,\beta}^{k}(U,V;\mathbb{X}) \ar@{^(->}[r] & \operatorname{Diff}^{k}(U;V) \ar@{^(->}[u] \ar@{-->}@/_{0.5cm}/[l]_{\overline{r}_{U,V}}} }
\end{equation}

Given a $C^{k}$-manifold $(M,\mathcal{A})$ and a $C_{n,\beta}^{k,\alpha}$-presheaf
$B$ in $\mathbb{X}$, let $C^{k}(\mathcal{A})$ and $B_{\alpha,\beta}^{k,n}(\mathcal{A};\mathbb{X})$
denote the collection of not necessarily maximal $C^{k}$-structures
$\mathcal{A}'\subset\mathcal{A}$ and $(B_{\alpha,\beta}^{k},\mathbb{X})$-structures
$\mathcal{B}_{\alpha,\beta}^{k}(\mathbb{X})\subset\mathcal{A}$, respectively.
Observe that there is an inclusion $\imath_{\mathcal{A}}:B_{\alpha,\beta}^{k}(\mathcal{A};\mathbb{X})\hookrightarrow C^{k}(\mathcal{A})$,
which take a $(B_{\alpha,\beta}^{k},\mathbb{X})$-structure and regard
it as a $C^{k}$-structure. We can now finally prove that for certain
classes of $B$ the set $B_{\alpha,\beta}^{k,n}(\mathcal{A};\mathbb{X})$
is non-empty.
\begin{thm}
\label{existence_thm_1} Let $B$ be a $C_{n,\beta}^{k,\alpha}$-presheaf
which is ordered, left-absorbing or right-absorbing, and which has
retractible $(B,k,\alpha,\beta)$-diffeomorphisms, all of this in
the same proper full ISP $\mathbb{X}$. In this case, for any $C^{k}$-manifold
$(M,\mathcal{A})$, the choice of a retraction presheaf $\overline{r}$
induces a function $\kappa_{\overline{r}}:C^{k}(\mathcal{A})\rightarrow B_{\alpha,\beta}^{k,n}(\mathcal{A};\mathbb{X})$
which is actually a retraction for $\imath_{\mathcal{A}}$. In particular,
under this hypothesis every $C^{k}$-manifold has a $(B_{\alpha,\beta}^{k},\mathbb{X})$-structure.
\end{thm}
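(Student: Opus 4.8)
The plan is to build $\kappa_{\overline{r}}$ chart by chart, correcting each chart by a $(B,k,\alpha,\beta)$-diffeomorphism of its image extracted from the retraction presheaf. Fix a $C^{k}$-manifold $(M,\mathcal{A})$ and a not-necessarily-maximal $C^{k}$-structure $\mathcal{A}'=\{\varphi_{i}:U_{i}\to\Omega_{i}\}_{i\in I}\in C^{k}(\mathcal{A})$, with $\Omega_{i}=\varphi_{i}(U_{i})$. Since $\operatorname{id}_{\Omega_{i}}\in\operatorname{Diff}^{k}(\Omega_{i};\Omega_{i})$, set
\[
\rho_{i}:=\overline{r}_{\Omega_{i},\Omega_{i}}(\operatorname{id}_{\Omega_{i}})\in\operatorname{Diff}_{\alpha,\beta}^{k}(\Omega_{i},\Omega_{i};\mathbb{X}),
\]
which is a $(B,k,\alpha,\beta)$-diffeomorphism of $\Omega_{i}$ by the definition of a retraction presheaf. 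If $B$ is right-absorbing put $\psi_{i}:=\rho_{i}\circ\varphi_{i}$, and if $B$ is left-absorbing put $\psi_{i}:=\rho_{i}^{-1}\circ\varphi_{i}$; in either case $\psi_{i}:U_{i}\to\Omega_{i}$ is a homeomorphism, and $\psi_{i}\circ\phi^{-1}=\rho_{i}^{\pm1}\circ(\varphi_{i}\circ\phi^{-1})$ is $C^{k}$ for every chart $\phi\in\mathcal{A}$, so by maximality $\psi_{i}\in\mathcal{A}$. As the $U_{i}$ still cover $M$, the family $\mathcal{B}':=\{\psi_{i}\}_{i\in I}$ is a subatlas of $\mathcal{A}$, and I define $\kappa_{\overline{r}}(\mathcal{A}'):=\mathcal{B}'$.

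The first point to check is that $\mathcal{B}'$ is a $(B_{\alpha,\beta}^{k},\mathbb{X})$-structure, i.e.\ that each transition $\psi_{j}\circ\psi_{i}^{-1}$ is a $(B,k,\alpha,\beta)$-function. Writing $\tau_{ij}=\varphi_{j}\circ\varphi_{i}^{-1}$ for the original $C^{k}$ transitions, one computes $\psi_{j}\circ\psi_{i}^{-1}=\rho_{j}\circ\tau_{ij}\circ\rho_{i}^{-1}$ in the right-absorbing case and $\psi_{j}\circ\psi_{i}^{-1}=\rho_{j}^{-1}\circ\tau_{ij}\circ\rho_{i}$ in the left-absorbing case. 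Restrictions of $(B,k,\alpha,\beta)$-diffeomorphisms to open subsets are again $(B,k,\alpha,\beta)$-diffeomorphisms, since $U\mapsto B_{\alpha,\beta}^{k}(U;\mathbb{X})$ is a presheaf (Proposition \ref{prop_compatible_is_Frechet}) and restrictions of diffeomorphisms are diffeomorphisms. Restricting each factor to the appropriate open pieces of $\Omega_{i},\Omega_{j}$, in the right-absorbing case $\psi_{j}\circ\psi_{i}^{-1}$ is the composite of the $C^{k}$-diffeomorphism $\tau_{ij}\circ\rho_{i}^{-1}$ with the $(B,k,\alpha,\beta)$-diffeomorphism $\rho_{j}$, hence is a $(B,k,\alpha,\beta)$-diffeomorphism by the right-absorbing hypothesis; the left-absorbing case is symmetric, factoring $\psi_{j}\circ\psi_{i}^{-1}$ as $(\rho_{j}^{-1}\circ\tau_{ij})\circ\rho_{i}$ and applying $\circ_{l}$. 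Note that $\alpha$ and $\beta$ are unchanged, precisely because one only ever composes a $(B,k,\alpha,\beta)$-map with a $C^{k}$-diffeomorphism, which is what the absorbing conditions govern.

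It remains to see that $\kappa_{\overline{r}}$ retracts $\imath_{\mathcal{A}}$. If $\mathcal{A}'$ lies in the image of $\imath_{\mathcal{A}}$, i.e.\ $\mathcal{A}'=\mathcal{B}$ is already a $(B_{\alpha,\beta}^{k},\mathbb{X})$-structure, then for each $i$ the identity $\operatorname{id}_{\Omega_{i}}=\varphi_{i}\circ\varphi_{i}^{-1}$ is a transition of $\mathcal{B}$, so $\operatorname{id}_{\Omega_{i}}\in\operatorname{Diff}_{\alpha,\beta}^{k}(\Omega_{i},\Omega_{i};\mathbb{X})$; since $\overline{r}_{\Omega_{i},\Omega_{i}}$ retracts $\operatorname{Diff}_{\alpha,\beta}^{k}(\Omega_{i},\Omega_{i};\mathbb{X})\hookrightarrow\operatorname{Diff}^{k}(\Omega_{i};\Omega_{i})$, we get $\rho_{i}=\operatorname{id}_{\Omega_{i}}$, whence $\psi_{i}=\varphi_{i}$ for all $i$ and $\kappa_{\overline{r}}(\mathcal{B})=\mathcal{B}$. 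This is $\kappa_{\overline{r}}\circ\imath_{\mathcal{A}}=\operatorname{id}$. The ``in particular'' clause then follows: $C^{k}(\mathcal{A})$ is nonempty (it contains $\mathcal{A}$), so its image under $\kappa_{\overline{r}}$ is a nonempty subset of $B_{\alpha,\beta}^{k,n}(\mathcal{A};\mathbb{X})$, i.e.\ $M$ admits a $(B_{\alpha,\beta}^{k},\mathbb{X})$-structure.

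The step needing the most care is the domain bookkeeping in the second paragraph: $\psi_{j}\circ\psi_{i}^{-1}$ is only defined on $\psi_{i}(U_{ij})$, so one must track, e.g.\ in the right-absorbing case, that $\rho_{i}^{-1}$ carries $\psi_{i}(U_{ij})=\rho_{i}(\varphi_{i}(U_{ij}))$ onto $\varphi_{i}(U_{ij})$, then $\tau_{ij}$ onto $\varphi_{j}(U_{ij})$, then $\rho_{j}$ onto $\psi_{j}(U_{ij})$, so that every factor is a genuine diffeomorphism between the correct open sets and the absorbing hypothesis applies verbatim to the restricted maps. Beyond this and the absorbing property, the construction needs only $\overline{r}$; orderedness is part of the standing framework ensuring the category $\mathbf{Diff}_{\alpha,\beta}^{B,k}(\mathbb{X})$ of Theorem B is well-defined (cf.\ Proposition \ref{category_B_k_a_b}).
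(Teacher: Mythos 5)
Your proof is correct, and it shares the paper's overall strategy (deform the atlas by means of the retraction presheaf, then certify the new transition functions via the absorbing property), but the deformation you use is genuinely different and in fact leaner. The paper applies $\overline{r}$ to \emph{every} transition function, producing the doubly-indexed, domain-refined atlas $\overline{\varphi}_{j;i}=\overline{r}(\varphi_{ji})\circ\varphi_{i}\vert_{U_{ij}}$; its new transitions take the form $\overline{r}(\varphi_{kl})\circ\varphi_{li}\circ\overline{r}(\varphi_{ji})^{-1}$, and certifying these uses the absorbing property together with the ordering hypothesis (to compose the two resulting $(B,k,\alpha,\beta)$-maps) and the tacit assertion that $\overline{r}(\varphi_{ji})^{-1}$ is again a $(B,k,\alpha,\beta)$-map. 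You instead apply $\overline{r}$ only to the identities $\operatorname{id}_{\Omega_{i}}$, obtaining one correcting $(B,k,\alpha,\beta)$-diffeomorphism $\rho_{i}$ per chart and post-composing; in the right-absorbing case your atlas is exactly the diagonal subfamily $\{\overline{\varphi}_{i;i}\}$ of the paper's. The resulting transitions $\rho_{j}^{\pm1}\circ\tau_{ij}\circ\rho_{i}^{\mp1}$ are certified by a \emph{single} application of the absorbing property, since the inverse factor only ever needs to be a $C^{k}$-diffeomorphism. This buys two things: the ordering hypothesis drops out of this particular theorem (as you note, it is still needed so that $\mathbf{Diff}_{\alpha,\beta}^{B,k}(\mathbb{X})$ is a category), and your $\kappa_{\overline{r}}$ retracts $\imath_{\mathcal{A}}$ on the nose ($\psi_{i}=\varphi_{i}$ when $\mathcal{A}'$ is already a $(B_{\alpha,\beta}^{k},\mathbb{X})$-structure), whereas the paper's construction returns the restricted charts $\varphi_{j}\vert_{U_{ij}}$ and is a retraction only up to identifying equivalent subatlases. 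Your attention to the domain bookkeeping, and to the fact that restrictions of $(B,k,\alpha,\beta)$-diffeomorphisms remain such because $B_{\alpha,\beta}^{k}(-;\mathbb{X})$ is a presheaf, is exactly the care the argument requires.
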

\begin{proof}
Let $\mathcal{A}'\subset\mathcal{A}$ be some not necessarily maximal
$C^{k}$-structure and let $\varphi_{i}:U_{i}\rightarrow\mathbb{R}^{n}$
be its charts. The transition functions are given by $\varphi_{ji}=\varphi_{j}\circ\varphi_{i}^{--1}:\varphi_{i}(U_{ij})\rightarrow\varphi_{j}(U_{ij})$.
Notice that the restricted chart $\varphi_{j}\vert_{U_{ij}}$ can
be recovered by $\varphi_{j}\vert_{U_{ij}}=\varphi_{ji}\circ\varphi_{i}\vert_{U_{ij}}$
for each $i$ such that $U_{ij}\neq\varnothing$. This motivate us
to define new functions $\overline{\varphi}_{j;i}:U_{ij}\rightarrow\mathbb{R}^{n}$
by $\overline{\varphi}_{j;i}=\overline{r}(\varphi_{ji})\circ\varphi_{i}\vert_{U_{ij}}$,
where $\overline{r}$ is a fixed restriction presheaf. They are homeomorphisms
onto their images because they are composites of them. We assert that
when varying $i$ and $j$, the maps $\overline{\varphi}_{j;i}$ generate
a $(B_{\alpha,\beta}^{k},\mathbb{X})$-structure, which we denote
by $\overline{r}(\mathcal{A}')$. Indeed, for each $i,j,k,l$, then
transition functions $\overline{\varphi}_{k;l}\circ\overline{\varphi}_{j;i}^{-1}:\overline{\varphi}_{j;i}(U_{ijkl})\rightarrow\overline{\varphi}_{k;l}(U_{ijkl})$
are given by 
\begin{eqnarray*}
\overline{\varphi}_{k;l}\circ\overline{\varphi}_{j;i}^{-1} & = & [\overline{r}(\varphi_{kl})\circ\varphi_{l}\vert_{U_{ijkl}}]\circ[\varphi_{i}^{-1}\vert_{U_{ijkl}}\circ\overline{r}(\varphi_{ji})^{-1}]\\
 & = & \overline{r}(\varphi_{kl})\circ(\varphi_{li})\vert_{U_{ijkl}}\circ\overline{r}(\varphi_{ji})^{-1},
\end{eqnarray*}
which are $(B,k,\alpha,\beta)$-functions in $\mathbb{X}$, due to
the absorbing properties of $B$ in $\mathbb{X}$. More precisely,
if $B$ is left-absorbing, then $\overline{r}(\varphi_{kl})\circ(\varphi_{li})\vert_{U_{ijkl}}$
is a $(B,k,\alpha,\beta)$-function in $\mathbb{X}$. But $\overline{r}(\varphi_{ji})^{-1}$
is also a $(B,k,\alpha,\beta)$-function in $\mathbb{X}$ and by the
ordering hypothesis on $B$ the composite remains a $(B,k,\alpha,\beta)$-function
in $\mathbb{X}$. In this case, define $\kappa_{\overline{r}}(\mathcal{A}')=\overline{r}(\mathcal{A}')$.
If $B$ is right-absorbing, similar argument holds. That $\kappa_{\overline{r}}$
is a retraction for $\imath_{\mathcal{A}}$, i.e, that $\overline{r}(\imath_{\mathcal{A}}(\mathcal{B}_{\alpha,\beta}^{k}(\mathbb{X})))=\mathcal{B}_{\alpha,\beta}^{k}(\mathbb{X})$
follows from the fact that $\overline{r}$ is a retraction presheaf.
\end{proof}
Observe that if $B$ is left-absorbing or right-absorbing, then it
is automatically unital, so that from Proposition \ref{category_B_k_a_b}
under the hypotheses of the last theorem the category $\mathbf{Diff}_{\alpha,\beta}^{B,k}(\mathbb{X})$
is well-defined. We have an obvious forgetful functor $F_{\alpha,\beta}^{B,k}:\mathbf{Diff}_{\alpha,\beta}^{B,k}(\mathbb{X})\rightarrow\mathbf{Diff}^{k}$
which takes a $(B_{\alpha,\beta}^{k}\mathbb{X})$-manifold $(M,\mathcal{A},\mathcal{B}_{\alpha,\beta}^{k}(\mathbb{X}))$
and forgets $\mathcal{B}_{\alpha,\beta}^{k}(\mathbb{X})$ (this is
essentally an extension of the inclusion $\imath_{\mathcal{A}}$).
Our task is to show that this functor has adjoints. We begin by proving
existence of adjoints on the core.

We recall that if $\mathbf{C}$ is a category, then its \emph{core}
is the subcategory $\operatorname{C}(\mathbf{C})\subset\mathbf{C}$
obtained by forgetting all morphisms which are not isomorphisms. Every
functor $F:\mathbf{C}\rightarrow\mathbf{D}$ factors through the core,
so that we have an induced functor $\operatorname{C}(F):\operatorname{C}(\mathbf{C})\rightarrow\operatorname{C}(\mathbf{D})$.
Actually, the core construction provides a functor $\operatorname{C}:\mathbf{Cat}\rightarrow\mathbf{Cat}$,
where $\mathbf{Cat}$ denotes the category of all categories \citep{maclane,handbook}.
\begin{thm}
\label{existence_thm_2} With the same notations and hypotheses of Theorem
\ref{existence_thm_1}, for every restriction presheaf $\overline{r}$
the rule $\kappa_{\overline{r}}$ induces a functor $K_{\overline{r}}:\operatorname{C}(\mathbf{Diff}^{k})\rightarrow\operatorname{C}(\mathbf{Diff}_{\alpha,\beta}^{B,k}(\mathbb{X}))$.
If $B$ is left-absorbing (resp. right-absorbing), then $K_{\overline{r}}$
is a left (resp. right) adjoint for the core of the forgetful functor.
In particular, if $B$ is absorbing, then $\operatorname{C}(F_{\alpha,\beta}^{B,k})$
is ambidextrous adjoint.
\end{thm}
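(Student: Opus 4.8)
The plan is to build $K_{\overline{r}}$ first on objects, then on the isomorphisms of the core, and then to read off the adjunction from the fact that $F_{\alpha,\beta}^{B,k}\circ K_{\overline{r}}$ is the identity on the nose (on objects and on underlying maps). On an object $(M,\mathcal{A})$ of $\operatorname{C}(\mathbf{Diff}^{k})$ I set $K_{\overline{r}}(M,\mathcal{A})=(M,\mathcal{A},\kappa_{\overline{r}}(\mathcal{A}))$, where $\kappa_{\overline{r}}(\mathcal{A})=\overline{r}(\mathcal{A})$ is the $(B_{\alpha,\beta}^{k},\mathbb{X})$-structure produced in Theorem~\ref{existence_thm_1}, generated by the charts $\overline{\varphi}_{j;i}=\overline{r}(\varphi_{ji})\circ\varphi_{i}|_{U_{ij}}$; Theorem~\ref{existence_thm_1} guarantees this is a genuine $(B_{\alpha,\beta}^{k},\mathbb{X})$-structure and that $\kappa_{\overline{r}}$ retracts $\imath_{\mathcal{A}}$. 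On a morphism of $\operatorname{C}(\mathbf{Diff}^{k})$, i.e. a $C^{k}$-diffeomorphism $f\colon(M,\mathcal{A})\to(M',\mathcal{A}')$, I put $K_{\overline{r}}(f)=f$ with the same underlying map; the thing to check is that $f$ is then a $(B_{\alpha,\beta}^{k},\mathbb{X})$-isomorphism between the two $\overline{r}$-manifolds. For $\overline{\varphi}_{j;i}\in\overline{r}(\mathcal{A})$ and $\overline{\psi}_{l;m}\in\overline{r}(\mathcal{A}')$ one computes
\[
\overline{\psi}_{l;m}\circ f\circ\overline{\varphi}_{j;i}^{-1}=\overline{r}(\psi_{lm})\circ\bigl[\psi_{m}\circ f\circ\varphi_{i}^{-1}\bigr]\circ\overline{r}(\varphi_{ji})^{-1},
\]
with the bracketed factor a $C^{k}$-diffeomorphism and the outer factors $(B,k,\alpha,\beta)$-diffeomorphisms; the absorbing hypothesis absorbs the $C^{k}$-piece into one of the $B$-pieces and the ordering hypothesis (via the chain-rule Lemma) keeps the whole composite a $(B,k,\alpha,\beta)$-function in $\mathbb{X}$ — this is verbatim the computation in the proof of Theorem~\ref{existence_thm_1}. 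Running it for $f^{-1}$ too shows $f$ is a $(B_{\alpha,\beta}^{k},\mathbb{X})$-isomorphism, and since $K_{\overline{r}}$ leaves underlying maps alone it is clearly functorial; this is the first claim.

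For the adjunction I give the unit and counit. As $F_{\alpha,\beta}^{B,k}\circ K_{\overline{r}}=id$ on objects and underlying maps, the unit is $\eta_{M}=id_{M}\colon M\to F_{\alpha,\beta}^{B,k}(K_{\overline{r}}(M))$, manifestly natural. For the counit, given $N=(M',\mathcal{A}',\mathcal{B})$ I take $\varepsilon_{N}=id_{M'}$, regarded as a map $K_{\overline{r}}(F_{\alpha,\beta}^{B,k}(N))=(M',\mathcal{A}',\overline{r}(\mathcal{A}'))\to N$. The substantive point is to see that $\varepsilon_{N}$ is actually a morphism of $\operatorname{C}(\mathbf{Diff}_{\alpha,\beta}^{B,k}(\mathbb{X}))$, i.e. a $(B_{\alpha,\beta}^{k},\mathbb{X})$-isomorphism: for a chart $\varphi=\overline{\varphi}_{j;i}=\overline{r}(\varphi_{ji})\circ\varphi_{i}|$ of $\overline{r}(\mathcal{A}')$ and a chart $\phi$ of $\mathcal{B}\subseteq\mathcal{A}'$,
\[
\phi\circ\varphi^{-1}=\bigl[\phi\circ\varphi_{i}^{-1}\bigr]\circ\overline{r}(\varphi_{ji})^{-1},\qquad \varphi\circ\phi^{-1}=\overline{r}(\varphi_{ji})\circ\bigl[\varphi_{i}\circ\phi^{-1}\bigr],
\]
where the bracketed maps are ordinary $C^{k}$-transition functions of $\mathcal{A}'$; left-absorbing disposes of the first and right-absorbing of the second, each again followed by the ordering step, so that $\varepsilon_{N}$ is a $(B_{\alpha,\beta}^{k},\mathbb{X})$-morphism (resp. its inverse is) on the side matching the absorbing hypothesis in force, and naturality of $\varepsilon$ is automatic because all arrows in sight are identities on underlying maps.

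With unit and counit in hand, the triangle identities $F_{\alpha,\beta}^{B,k}(\varepsilon_{N})\circ\eta_{F_{\alpha,\beta}^{B,k}(N)}=id$ and $\varepsilon_{K_{\overline{r}}(M)}\circ K_{\overline{r}}(\eta_{M})=id$ reduce to identities of underlying maps and hold for free. Hence: if $B$ is left-absorbing we get $K_{\overline{r}}\dashv\operatorname{C}(F_{\alpha,\beta}^{B,k})$; if $B$ is right-absorbing the mirror-image construction gives $\operatorname{C}(F_{\alpha,\beta}^{B,k})\dashv K_{\overline{r}}$; and if $B$ is absorbing both hold simultaneously with the \emph{same} $K_{\overline{r}}$, i.e. $\operatorname{C}(F_{\alpha,\beta}^{B,k})$ is an ambidextrous adjoint.

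The step I expect to be the main obstacle is the chart-level verification in the second paragraph — that the structural identity maps $\varepsilon_{N}$, and $K_{\overline{r}}$ applied to a $C^{k}$-diffeomorphism, really land in $\mathbf{Diff}_{\alpha,\beta}^{B,k}(\mathbb{X})$. Everything categorical (naturality, the triangle identities, functoriality of $K_{\overline{r}}$) is forced once those maps are known to be $(B_{\alpha,\beta}^{k},\mathbb{X})$-morphisms, and the content of that verification is exactly the absorption-plus-ordering bookkeeping on compositions of the form $[\,C^{k}\text{-diffeo}\,]\circ[\,B\text{-diffeo}\,]$ and $[\,B\text{-diffeo}\,]\circ[\,C^{k}\text{-diffeo}\,]$ already handled in the proof of Theorem~\ref{existence_thm_1}; keeping track of which of left-/right-absorbing is needed for which side of the adjunction is the only delicate accounting.
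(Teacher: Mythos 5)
Your proposal is correct and is essentially the paper's argument: the same functor $K_{\overline{r}}(M,\mathcal{A})=(M,\mathcal{A},\overline{r}(\mathcal{A}))$, $K_{\overline{r}}(f)=f$, with functoriality and the adjunction both reduced to the chart-level computation $\overline{r}(\varphi_{kl})\circ(\varphi_{l}f\varphi_{i})\vert\circ\overline{r}(\varphi_{ji})^{-1}$ handled by absorption plus the ordering hypothesis, exactly as in the proof of Theorem \ref{existence_thm_1}. The only difference is presentational: you package the adjunction via identity unit/counit and the triangle identities, whereas the paper exhibits the natural hom-set bijection $\imath_{M,M'},\xi_{M,M'}$ directly; your counit verification is the $f=id$ case of the paper's verification that $\xi_{M,M'}$ is well-defined, so the content is the same.
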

\begin{proof}
Define $K_{\overline{r}}$ by $K_{\overline{r}}(M,\mathcal{A})=(M,\mathcal{A},\overline{r}(\mathcal{A}))$
on objects and by $K_{\overline{r}}(f)=f$ on morphisms. On objects
it is clearly well-defined. On morphisms it is too, because for any
$\overline{\varphi}_{j;i},\overline{\varphi}_{k;l}\in\overline{r}(\mathcal{A})$
we have 
\begin{eqnarray}
\overline{\varphi}_{k;l}\circ f\circ\overline{\varphi}_{j;i}^{-1} & = & [\overline{r}(\varphi_{kl})\circ\varphi_{l}\vert_{U_{ijkl}}]\circ f\circ[\varphi_{i}^{-1}\vert_{U_{ijkl}}\circ\overline{r}(\varphi_{ji})^{-1}]\label{local_charts_function}\\
 & = & \overline{r}(\varphi_{kl})\circ(\varphi_{l}f\varphi_{i})\vert_{U_{ijkl}}\circ\overline{r}(\varphi_{ji})^{-1}.\nonumber 
\end{eqnarray}
Since we are in the core, $(\varphi_{l}f\varphi_{i})$ is a $C^{k}$-diffeomorphism,
so that we can use the same arguments of that used in Theorem \ref{existence_thm_1}
to conclude that (\ref{local_charts_function}) is a $(B,k,\alpha,\beta)$-function
in $\mathbb{X}$. Preservation of compositions and identities is clear,
so that $K_{\overline{r}}$ really defines a functor. Suppose that
$B$ is left-absorbing. Given a $C^{k}$-manifold $(M,\mathcal{A})$
and a $(\text{\ensuremath{B'}}_{\alpha,\beta}^{k},\mathbb{X})$-manifold
$(M',\mathcal{A}',\mathcal{B'}_{\alpha,\beta}^{k}(\mathbb{X}))$ we
assert that there is a bijection\begin{equation}{\label{left_adjunction}
\xymatrix{\operatorname{Diff}_{\alpha,\beta}^{B,k}(K_{\overline{r}}(M,\mathcal{A});(M',\mathcal{A}',\mathcal{B'}_{\alpha,\beta}^{k}(\mathbb{X})) \ar[r]<-0.1cm>_-{\imath _{M,M'}} & \operatorname{Diff}^{k}((M,\mathcal{A});(M',\mathcal{A}')) \ar[l]<-0.1cm>_-{\xi_{M,M'}}  } }
\end{equation}which is natural in both manifolds. Define $\imath_{M,M'}(f)=f$ and
notice that this is well-defined, since locally it is given by the
inclusions $\imath_{U,V}$ in (\ref{retraction_presheaf}). Define
$\xi_{M,M'}(f)=f$. In order to show that this is also well-defined,
let $f:(M,\mathcal{A})\rightarrow(M',\mathcal{A}')$ a $C^{k}$-diffeomorphism
and let $\overline{\varphi}_{j;i}\in\overline{r}(\mathcal{A})$ and
$\phi\in\mathcal{B'}_{\alpha,\beta}^{k}(\mathbb{X})$ charts. Thus,
\begin{equation}
\phi\circ f\circ\overline{\varphi}_{j;i}^{-1}=(\phi\circ f\circ\varphi_{i}^{-1})\circ\overline{r}(\varphi_{ji})^{-1}.\label{local_expression_thm_2}
\end{equation}
Due to the inclusion $\imath_{U,V}$, the chart $\phi$ is a $C^{k}$-chart,
so that $\phi\circ f\circ\varphi_{i}^{-1}$ is a $C^{k}$-diffeomorphism
(since $f$ is a $C^{k}$-diffeomorphism). The left-absorption property
then implies that (\ref{local_expression_thm_2}) is a $(B,k,\alpha,\beta)$-function
in $\mathbb{X}$, meaning that $\xi_{M,M'}$ is well-defined. That
(\ref{left_adjunction}) holds is clear; naturality follows from the
fact that the maps $\imath_{M,M'}$ and $\xi_{M,M'}$ do not depends
on the manifolds. The case in which $B$ is right-absorbing is completely
analogous.
\end{proof}
\begin{cor}
With the same notations and hypotheses of Theorem \ref{existence_thm_1},
the function $\kappa_{\overline{r}}$ independs of $\overline{r}$.
More precisely, if $\overline{r}$ and $\overline{r}'$ are two retraction
presheaves, then there exists a natural isomorphism $K_{\overline{r}}\simeq K_{\overline{r}'}$,
so that for every $C^{k}$-manifold $(M,\mathcal{A})$ we have a corresponding
$(B,k,\alpha,\beta)$-diffeomorphism $(M,\overline{r}(\mathcal{A}))\simeq(M,\overline{r}'(\mathcal{A}))$.
\end{cor}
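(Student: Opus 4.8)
The plan is to produce the natural isomorphism $K_{\overline{r}}\simeq K_{\overline{r}'}$ with every component an identity map; its existence is in any case forced by uniqueness of adjoints, but the explicit description is what yields the concrete diffeomorphism of $C^{k}$-manifolds asserted in the statement.

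First I would fix a $C^{k}$-manifold $(M,\mathcal{A})$ and a (not necessarily maximal) subatlas $\mathcal{A}'\subset\mathcal{A}$ with charts $\varphi_{i}$, and consider $\operatorname{id}_{M}$ regarded as a map $(M,\overline{r}(\mathcal{A}'))\to(M,\overline{r}'(\mathcal{A}'))$. In the charts $\overline{\varphi}_{j;i}=\overline{r}(\varphi_{ji})\circ\varphi_{i}\vert_{U_{ij}}$ of $\overline{r}(\mathcal{A}')$ and $\overline{\varphi}'_{k;l}=\overline{r}'(\varphi_{kl})\circ\varphi_{l}\vert_{U_{kl}}$ of $\overline{r}'(\mathcal{A}')$, the local expression of $\operatorname{id}_{M}$ over $U_{ijkl}$ is
\[
\overline{\varphi}'_{k;l}\circ\operatorname{id}_{M}\circ\overline{\varphi}_{j;i}^{-1}=\overline{r}'(\varphi_{kl})\circ(\varphi_{li})\vert_{U_{ijkl}}\circ\overline{r}(\varphi_{ji})^{-1},
\]
which has exactly the shape of the transition functions handled in the proof of Theorem \ref{existence_thm_1}. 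I would then rerun that argument verbatim: if $B$ is left-absorbing, absorb the $C^{k}$-diffeomorphism $\varphi_{li}$ into the $(B,k,\alpha,\beta)$-diffeomorphism $\overline{r}'(\varphi_{kl})$, and then, since $\overline{r}(\varphi_{ji})^{-1}$ is again a $(B,k,\alpha,\beta)$-function in $\mathbb{X}$, the ordering hypothesis shows the full composite is a $(B,k,\alpha,\beta)$-function in $\mathbb{X}$; the right-absorbing case is symmetric. Interchanging $\overline{r}$ and $\overline{r}'$ gives the same conclusion for the inverse map, so $\operatorname{id}_{M}$ is a $(B,k,\alpha,\beta)$-diffeomorphism $(M,\overline{r}(\mathcal{A}))\simeq(M,\overline{r}'(\mathcal{A}))$.

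Next I would assemble these into a natural transformation $\theta\colon K_{\overline{r}}\Rightarrow K_{\overline{r}'}$ by setting $\theta_{(M,\mathcal{A})}:=\operatorname{id}_{M}$, viewed as the isomorphism $(M,\mathcal{A},\overline{r}(\mathcal{A}))\to(M,\mathcal{A},\overline{r}'(\mathcal{A}))$ in $\operatorname{C}(\mathbf{Diff}_{\alpha,\beta}^{B,k}(\mathbb{X}))$; it is invertible by the previous step, and naturality against any $C^{k}$-diffeomorphism $f$ collapses to $\operatorname{id}\circ f=f\circ\operatorname{id}$, so $\theta$ is a natural isomorphism and its value at $(M,\mathcal{A})$ is the desired diffeomorphism. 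As a conceptual cross-check one may instead note that, by Theorem \ref{existence_thm_2}, both $K_{\overline{r}}$ and $K_{\overline{r}'}$ are left adjoints (resp. right adjoints) of $\operatorname{C}(F_{\alpha,\beta}^{B,k})$, hence canonically isomorphic, and that since the adjunction bijections in Theorem \ref{existence_thm_2} act as the identity on underlying maps, the canonical isomorphism has identity components --- which is exactly $\theta$. I expect the only real obstacle to be the clerical one of checking that the displayed local expression genuinely falls under the absorption-plus-ordering bookkeeping of Theorem \ref{existence_thm_1} in all four cases (left/right absorbing, map and inverse); this is immediate because it is literally the computation already carried out there, with $\overline{r}'$ in place of one copy of $\overline{r}$.
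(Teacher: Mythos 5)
Your proposal is correct, but it takes a different route from the paper: the paper disposes of this corollary in one line, by observing that $K_{\overline{r}}$ and $K_{\overline{r}'}$ are both left (resp.\ right) adjoints of $\operatorname{C}(F_{\alpha,\beta}^{B,k})$ by Theorem \ref{existence_thm_2}, hence canonically isomorphic by uniqueness of adjoints. You instead construct the isomorphism explicitly, showing that $\operatorname{id}_{M}$ itself is a $(B,k,\alpha,\beta)$-diffeomorphism $(M,\overline{r}(\mathcal{A}))\to(M,\overline{r}'(\mathcal{A}))$ by computing its local expression $\overline{r}'(\varphi_{kl})\circ(\varphi_{li})\vert\circ\overline{r}(\varphi_{ji})^{-1}$ and rerunning the absorption-plus-ordering argument of Theorem \ref{existence_thm_1} with $\overline{r}'$ substituted in one factor; this is exactly the right computation, and naturality is trivial since all functors involved are the identity on underlying maps. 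What your approach buys is the concrete content of the last clause of the corollary: the paper's abstract argument only guarantees \emph{some} natural isomorphism, and one still has to unwind the unit/counit of the adjunctions (which, as you note, act as the identity on underlying maps) to see that the resulting diffeomorphism is $\operatorname{id}_{M}$; your version delivers that immediately. The only caveat is one you inherit from the paper rather than introduce: your argument, like the proof of Theorem \ref{existence_thm_1}, uses that $\overline{r}(\varphi_{ji})^{-1}$ is again a $(B,k,\alpha,\beta)$-function, which the paper asserts without separate justification; and it relies on closure of $(B,k,\alpha,\beta)$-functions under composition, which is where the ordering hypothesis enters. Since you flag both of these and they are used identically in the paper's own Theorem \ref{existence_thm_1}, there is no gap beyond what the paper already assumes.
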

\begin{proof}
Straighforward from the uniqueness of the left and right adjoints
\citep{maclane,handbook}.
\end{proof}
We would like to extend Theorem \ref{existence_thm_2} to the whole
category $\mathbf{Diff}_{\alpha,\beta}^{B,k}(\mathbb{X})$. In order
to do this, notice that when proving Theorem \ref{existence_thm_2}
the hypothesis that we are working on the core was used only to conclude
that the local expressions $\phi f\varphi$ are $C^{k}$-diffeomorphisms,
leading us to use the diffeomorphism-absorption properties. But, if
instead absorbing only diffeomorphisms we can absorb every $C^{k}$-map,
we will then be able to absorb $\phi f\varphi$ for every $f$, meaning
that the same proof will still work in $\mathbf{Diff}_{\alpha,\beta}^{B,k}(\mathbb{X})$.

We say that a $C_{n,\beta}^{k,\alpha}$-presheaf $B$ is \emph{fullly
left-absorbing }(resp. \emph{fully right-absorbing}) \emph{in $\mathbb{X}$}
if for every $U,V,W$ there exists the dotted arrow in the lower (resp.
upper) square below. If $B$ is both fully left-aborving and fully
right-absorbing, we say simply that it is \emph{fully absorbing in
$\mathbb{X}$}. There is also an abstract characterization in terms
of left/right/bilateral ideals, but now considered in the magma $\operatorname{Mor}(\mathbf{C})$
of all morphisms instead of on the magma $\operatorname{Iso}(\mathbf{C})$
of isomorphisms.$$
\xymatrix{\ar@{^(->}[d]_{id \times \imath} \operatorname{C}^{k}(U;V) \times \operatorname{Diff}_{\alpha,\beta}^{k,n}(V,W;\mathbb{X}) \ar@{-->}[r]^-{\circ _r} & \operatorname{Diff}_{\alpha,\beta}^{k,n}(U,W;\mathbb{X}) \ar@{^(->}[d] \\
\operatorname{C}^{k}(U;V) \times \operatorname{C}^{k}(V;W) \ar[r]^-{\circ} & \operatorname{C}^{k}(U;W) \\
\ar@{^(->}[u]^{\imath \times id} \operatorname{Diff}_{\alpha,\beta}^{k,n}(U,V;\mathbb{X}) \times  \operatorname{C}^{k}(V;W) \ar@{-->}[r]_-{\circ _l} & \operatorname{Diff}_{\alpha,\beta}^{k,n}(U,W;\mathbb{X}) \ar@{^(->}[u]}
$$\textbf{Theorem B.} \emph{If $B$ is ordered, fully left-absorbing
(resp. fully right-absorbing) and has retractible $(B,k,\alpha,\beta)$-diffeomorphisms,
all of this in the same intersection presheaf $\mathbb{X}$, then
the choice of a retraction $\overline{r}$ induces a left-adjoint
(resp. right-adjoint) for the forgetful functor $F$ from $B_{\alpha,\beta}^{k}$-manifolds
to $C^{k}$-manifolds, which actually independs of $\overline{r}$.
In particular, if $B$ is fully absorbing, then $F$ is ambidextrous
adjoint.}
\begin{proof}
Immediate from the results and discussions above.
\end{proof}
\begin{cor}
With the same notations of the last theorem, if $B$ is fully left-absorbing
(resp. fully right-absorbing), then $\mathbf{Diff}_{\alpha,\beta}^{B,k}(\mathbb{X})$
has all small colimits (resp. small limits) that exist in $\mathbf{Diff}^{k}$.
If $B$ is fully absorbing, then the same applies for limits and colimits
simultaneously. In particular, in this last case $\mathbf{Diff}_{\alpha,\beta}^{B,k}(\mathbb{X})$
has finite products and coproducts.
\end{cor}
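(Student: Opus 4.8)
The strategy is to transport (co)limits from $\mathbf{Diff}^{k}$ to $\mathbf{Diff}_{\alpha,\beta}^{B,k}(\mathbb{X})$ along the forgetful functor $F=F_{\alpha,\beta}^{B,k}$, using the adjunction of Theorem~B both to lift the (co)limiting object and to supply the comparison maps. I treat the fully left-absorbing case (colimits); the fully right-absorbing case (limits) is strictly dual, and the fully absorbing case is their conjunction. By Theorem~B we have $K_{\overline{r}}\dashv F$ with $F\circ K_{\overline{r}}=\operatorname{id}$. Let $D\colon J\to\mathbf{Diff}_{\alpha,\beta}^{B,k}(\mathbb{X})$ be a small diagram, $D(j)=(M_{j},\mathcal{A}_{j},\mathcal{B}_{j})$, and suppose $F\circ D$ has a colimit $(N,\mathcal{C})$ in $\mathbf{Diff}^{k}$, with colimiting cocone $\lambda_{j}\colon FD(j)\to(N,\mathcal{C})$. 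The candidate colimit of $D$ is the lift $K_{\overline{r}}(N,\mathcal{C})=(N,\mathcal{C},\overline{r}(\mathcal{C}))$ produced by the retraction presheaf.

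The one substantial point is that each leg $\lambda_{j}$, read now as a map $D(j)\to K_{\overline{r}}(N,\mathcal{C})$, is a $(B,k,\alpha,\beta)$-morphism in $\mathbb{X}$. In a chart $\psi\in\mathcal{B}_{j}$ and a chart $\overline{\varphi}=\overline{r}(\tau)\circ\varphi\in\overline{r}(\mathcal{C})$ its local expression is $\overline{r}(\tau)\circ(\varphi\,\lambda_{j}\,\psi^{-1})$, precisely the configuration appearing in the proof of Theorem~\ref{existence_thm_1}. The crucial difference is that $\lambda_{j}$ is only a $C^{k}$-\emph{map}, not a diffeomorphism: it is therefore the \emph{full} left-absorption of $B$ --- absorbing arbitrary $C^{k}$-maps and not merely the isomorphisms used on the core in Theorem~\ref{existence_thm_2} --- which, together with the ordering hypothesis, places this local expression in $B_{\alpha(i)}\cap_{X}C^{k-\beta(i)}$. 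Since $F$ is the identity on underlying maps, the compatibility of the $\lambda_{j}$ with the morphisms of $D$ is inherited from $\mathbf{Diff}^{k}$, so the $\lambda_{j}$ form a cocone in $\mathbf{Diff}_{\alpha,\beta}^{B,k}(\mathbb{X})$.

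Universality is then formal. For any object $Z$ and any cocone $\mu_{j}\colon D(j)\to Z$, the bijection (\ref{left_adjunction}) gives a natural identification $\operatorname{Diff}_{\alpha,\beta}^{B,k}(K_{\overline{r}}(N,\mathcal{C});Z)\cong\operatorname{Diff}^{k}((N,\mathcal{C});FZ)$. Applying $F$ to the cocone $\mu_{j}$ and using the universal property of $(N,\mathcal{C})=\operatorname{colim}(F\circ D)$ in $\mathbf{Diff}^{k}$ yields a unique $C^{k}$-map $u\colon(N,\mathcal{C})\to FZ$ with $u\circ\lambda_{j}=F\mu_{j}$; its image $w$ under the adjunction bijection is the required $(B,k,\alpha,\beta)$-morphism $K_{\overline{r}}(N,\mathcal{C})\to Z$, and the identity $F(w\circ\lambda_{j})=u\circ\lambda_{j}=F\mu_{j}$ forces $w\circ\lambda_{j}=\mu_{j}$ because $F$ is faithful. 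Faithfulness likewise gives uniqueness of $w$. Hence $K_{\overline{r}}(N,\mathcal{C})=\operatorname{colim}D$, and every colimit existing in $\mathbf{Diff}^{k}$ is realised in $\mathbf{Diff}_{\alpha,\beta}^{B,k}(\mathbb{X})$. Dually, in the fully right-absorbing case one lifts $\lim(F\circ D)$, verifies the projection legs are $(B,k,\alpha,\beta)$-morphisms by full right-absorption, and obtains the comparison map from the adjunction $F\dashv K_{\overline{r}}$; the fully absorbing case combines both, and specialising to the finite diagrams whose underlying (co)limits are disjoint unions and products --- which exist in $\mathbf{Diff}^{k}$ --- yields the asserted finite coproducts and products.

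I expect the main obstacle to be exactly the step showing the (co)limiting legs are genuine $(B,k,\alpha,\beta)$-morphisms. This is also the reason the corollary is not a purely formal consequence of ``adjoints preserve (co)limits'': the forgetful functor $F$ is faithful but not full, so the adjunction alone transports the comparison maps but \emph{not} the legs, and it is precisely the strengthening from core-absorption (Theorem~\ref{existence_thm_2}) to full absorption that supplies the non-invertible legs --- which are never diffeomorphisms --- as morphisms of $\mathbf{Diff}_{\alpha,\beta}^{B,k}(\mathbb{X})$.
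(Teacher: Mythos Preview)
Your argument is considerably more detailed than the paper's, which simply invokes ``preservation of small colimits/limits by left/right adjoint functors'' together with the existence of finite (co)products in $\mathbf{Diff}^{k}$. Your instinct that something beyond bare preservation is needed is sound. However, there is a directional mismatch in your step~1 that breaks the one-sided case.

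The local expression you write for the leg $\lambda_{j}\colon D(j)\to K_{\overline{r}}(N,\mathcal{C})$ is $\overline{r}(\tau)\circ(\varphi\,\lambda_{j}\,\psi^{-1})$: the $(B,k,\alpha,\beta)$-diffeomorphism $\overline{r}(\tau)$ sits on the \emph{left} (applied last), while the general $C^{k}$-map sits on the \emph{right} (applied first). Comparing with the two squares defining full absorption, this is the pattern of the \emph{upper} square, i.e.\ full \emph{right}-absorption, not full left-absorption. Contrast this with (\ref{local_expression_thm_2}) in Theorem~\ref{existence_thm_2}: there the $K_{\overline{r}}$-chart lies on the \emph{source} side, so the factor $\overline{r}(\varphi_{ji})^{-1}$ lands on the right and left-absorption applies. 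In your situation the $K_{\overline{r}}$-structure is on the \emph{target} (the colimit), which reverses the roles. The ordering hypothesis does not help here, since it governs composites of two $B$-functions, not a $B$-diffeomorphism against a bare $C^{k}$-map.

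This is not cosmetic: the adjunction $K_{\overline{r}}\dashv F$ furnished by full left-absorption controls maps \emph{out of} $K_{\overline{r}}(N)$, whereas cocone legs are maps \emph{into} it. In the fully absorbing case both directions are available, the counit $K_{\overline{r}}F\Rightarrow\mathrm{id}$ becomes invertible, and then your argument (and the paper's one-liner, via $K_{\overline{r}}(\operatorname{colim}FD)\cong\operatorname{colim}K_{\overline{r}}FD\cong\operatorname{colim}D$) goes through cleanly. For the purely one-sided statement, your explicit check of the legs does not close the gap as written; you should either restrict the detailed argument to the fully absorbing case or re-examine whether the one-sided claim can be salvaged with a different candidate for $\operatorname{colim}D$.
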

\begin{proof}
Just apply Theorem B together with the preservation of small colimits/limits
by left/right-adjoint functors \citep{maclane,handbook} and recall
that the category of $C^{k}$-manifolds has finite products and coproducts
\citep{models_infinitesimal_analysis ,smooth_spaces_baez}.
\end{proof}

\section*{Acknowledgments}

The first author was supported by CAPES (grant number 88887.187703/2018-00).

\end{document}